\begin{document}

\title{The Solvability Of Magneto-heating Coupling Model
 With Turbulent Convection Zone
And The Flow Fields 
}

\titlerunning{Magneto-heating coupling model}        

\author{Changhui Yao        \and
Yanping Lin \and
         Lixiu Wang      \and
       Xuefan Jia
        }

\authorrunning{C. Yao, L.Wang and X. Jia} 

\institute{  Corresponding author:         Changhui Yao \at
              School of Mathematics and Statistics, Zhengzhou University,450001, China.
               \email{chyao@lsec.cc.ac.cn}  \\
               Yanping Lin\at
               Department of Applied Mathematics, Hong Kong Polytechnic University, Hong Kong.\email{yanping.lin@ployu.edu.hk}.\\
              Lixiu Wang  \at  Beijing Computational Science Research Center, Beijing 100193,
China. \email{lixiuwang@csrc.ac.cn}.\\
               Xuefan Jia \at
              School of Mathematics and Statistics, Zhengzhou University,450001, China.
               \email{chyao@zzu.edu.cn}  \\
}
\date{Received: date / Accepted: date}

\maketitle

\begin{abstract}
In this paper, the magneto-heating coupling model is studied in details, with turbulent convection zone and the flow field involved. Our main work is to analyze the well-posed property of this model with the regularity techniques. For the magnetic field, we consider the space $H_0(curl)\cap H(div_0)$ and for the heat equation, we consider the space $H_0^1(\Omega)$. Then we present the weak formulation of the coupled magneto-heating model and establish the regularity problem. Using Roth's method,  monotone theories of nonlinear operator,  weak convergence theories, we prove that the limits of the solutions from Roth's  method converge to the solutions of the regularity problem with proper initial data. With the help of the spacial regularity technique, we derive the results of the well-posedness of the original problems  when the regular parameter $\epsilon\longrightarrow 0$. Moreover, with additional regularity assumption for both the magnetic field and temperature variable, we prove the uniqueness of the solutions.

\keywords{\ Magneto-heating coupling model  \and Regularity
 \and   Well-posedness   \and Stability}
 \subclass{ 65N30 \and  65N15 \and 35J25}
\end{abstract}

\section{Introduction}
\label{intro}

It is well known that the manifestation of magnetohydrodynamic dynamo (MHD) processes
can be applied to demonstrate large-scale  magnetic activities~\cite{MR01}.
Assume that the magnetic field ${\bf B}$, the electric field ${\bf E}$ and the electric
current density ${\bf J } $ are governed by the Maxwell's equations and constitutive
relations in the magnetohydrodynamic approximation, that is \cite{MR01},
\begin{eqnarray}
\label{equation:eq-1}
&& \partial_t{\bf B}+\nabla\times{\bf E}=0,\ \ \nabla\cdot {\bf B}=0,\\
\label{equation:eq-2}
&& \nabla\times{\bf B}=\mu {\bf J},\ \ {\bf J}=\sigma({\bf E}+{\bf U\times B}),
\end{eqnarray}
where $\mu$ and $\sigma$ are the magnetic permeability and the electric conductivity,
and ${\bf U}$ is the velocity of the fluid.

Large-scale  magnetic and flow fields activities can also drive small-scale turbulent flows as well as large-scale global circulations in their interiors~\cite{MR02,MR03}. Then it is useful to introduce mean-field dynamo theory~\cite{MR04}, which describes the large-scale behavior of such fields. The magnetic and velocity fields can be divided into mean fields and deviations (called ``fluctuations"), ${\bf B}=\bar{\bf B}+{\bf b}$ and ${\bf U}=\bar{\bf U}+{\bf u}$. The equations (\ref{equation:eq-1})-(\ref{equation:eq-2}) can be averaged by
\begin{eqnarray}
\label{equation:eq-3}
&& \partial_t\bar{\bf B}+\nabla\times\bar{\bf E}=0,\ \ \nabla\cdot \bar{\bf B}=0,\\
\label{equation:eq-4}
&& \nabla\times\bar{\bf B}=\mu \bar{\bf J},\ \ \bar{\bf J}=\sigma(\bar{\bf E}+\bar{\bf U}\times\bar{\bf B}+\mathcal{E}),
\end{eqnarray}
where $\mathcal{E}$ is the mean electromotive force due to fluctuations; it is crucial variable for all mean-field electrodynamics:
$$\mathcal{E}=\overline{\bf u\times b}. $$

In order to discuss $\mathcal{E}$, its mean part $\bar{\bf U}$ and the fluctuations ${\bf u}$ are assumed to be known. Then the fluctuations ${\bf b}$ are determined by
\begin{eqnarray}
\label{equation:eq-5}
\eta \nabla\cdot\nabla {\bf b}+\nabla\times(\bar{\bf U}\times{\bf b}+{\bf G})
-\partial_t{\bf b}&=&-\nabla\times({\bf u}\times\bar{\bf B}),\\
\label{equation:eq-6}
{\bf G}&=&({\bf u}\times{\bf B})-\overline{{\bf u}\times{\bf B}}.
\end{eqnarray}
This equation implies that ${\bf b}$ can be considered as a sum ${\bf b}^{0}
+{\bf b}^{\bar{\bf B}}$, where ${\bf b}^{0} $ is  independent of $\bar{\bf B}$ and ${\bf b}^{\bar{\bf B}}$ is a linear and homogeneous in $\bar{\bf B}$. This in turn leads to
$$ \mathcal{E}=\mathcal{E}^0+\mathcal{E}^{\bar{\bf B}}$$
in which $\mathcal{E}^0 $ is  independent of $\bar{\bf B}$ and $\mathcal{E}^{\bar{\bf B}}$ is a linear and homogeneous in $\bar{\bf B}$.

\par
For simplicity, we assume that there is no mean motion, and ${\bf u}$ corresponds to a homogeneous isotropic turbulence. One can derive the relationship
\begin{eqnarray}
\label{equation:eq-7}
\mathcal{E}=\alpha\bar{\bf B}-\beta \nabla\times \bar{\bf B},
\end{eqnarray}
where the two coefficients, $\alpha$ and $\beta$, are independent of position and are determined by ${\bf u}$, and $\eta=\frac{1}{\mu\sigma}$. The term $\alpha\bar{\bf B}$ describes the $\alpha$-effect.
Substituting (\ref{equation:eq-7})into (\ref{equation:eq-3})-(\ref{equation:eq-4}), one can get
\begin{eqnarray}
\label{equation:eq-8}
\partial_t\bar{\bf B}+\nabla\times((\eta+\beta)\nabla\times\bar{\bf B}&=&
\nabla\times(\alpha\bar{\bf B})+\nabla\times(\bar{\bf U}\times\bar{\bf B}),\\
\label{equation:eq-9}
\nabla\cdot \bar{\bf B}&=&0.
\end{eqnarray}
Here $ \lambda=:\eta+\beta$ is the effective magnetic diffusivity, covering both magnetic diffusion at the microscopic level and the turbulent diffusion, respectively and it is also effected by the temperature. The $\alpha$ term represents the turbulent magnetic helicity. In order to deal with the feedback of the magnetic field on fluid motions (the Lorentz force), we employ a so-called $\alpha$-effect or $\alpha$-quench \cite{MR05} by the form
\begin{eqnarray}
\label{equation:eq-10}
\alpha(\bar{\bf B})=\frac{\alpha_0 f({\bf x},t) }{1+(\hat{R_m})^n|\bar{\bf B}/B_{eq}|^2},
\end{eqnarray}
where $\alpha_0>0$  is constant, $0\leq n\leq 2$, $f({\bf x}, t) $ is a model-oriented function, and the $\hat{R_m}$ dependent quenching expression should be regarded as a simplified steady state expression for the nonlinear dynamo~\cite{MR06}, ${B_{eq}}$ is the equipartition magnetic field and can be assumed as a constant. For the convenience,  here and later, we still denote $\bar{\bf B}$ by ${\bf B}$ and simplify (\ref{equation:eq-8})-(\ref{equation:eq-9}) by the following  form with
 $\theta({\bf x},t)$ denoting the temperature at location ${\bf x}\in \Omega$ and time $t$.
 \begin{eqnarray}
\partial_t{ \bf B}+\nabla\times(\lambda(\theta)\nabla\times {\bf B})
-{\Lambda}\nabla(\nabla\cdot{\bf B})
&=&R_\alpha\nabla\times(\frac{f({\bf x},t){\bf B}}{1+\gamma|{\bf B}|^2})\nonumber\\
\label{equation:eq-11}
&+&\nabla\times({\bf U\times B}),  \hspace{0.1cm}\ \ in \ \ (0,T]\times\Omega,\\
\label{equation:eq-12}
 \nabla \cdot{\bf B}&=&0,\ \ \hspace{2cm}\ in \ \ (0,T]\times\Omega,
\end{eqnarray}
where $\lambda(\theta)$ is  bounded and strictly positive i.e. $0<\lambda_0\leq\lambda\leq \lambda_M< +\infty$, $\gamma$ is a constant parameter, $R_\alpha$ is a dynamo parameter in connection with the generation process of small scale turbulence.
With the boundary condition
\begin{equation}
\label{equation:eq-13}
\lambda(\theta)\nabla\times{\bf B}\times {\bf n}=0, \ \ on \ \ \partial\Omega,
\end{equation}
and the initial data
$$B({\bf x},0)={\bf B}_0({\bf x}).$$

\par
  The local density of
Joule's heat  equation generated by
$${\bf E} \cdot {\bf J}=\sigma(|\nabla\times {\bf B}|^2-\nabla\times {\bf B}\cdot({\bf U\times B})
-R_\alpha\nabla\times {\bf B}\cdot(\frac{f({\bf x},t){\bf B}}{1+\gamma|{\bf B}|^2})).$$
Thus, from Fourier¡¯s law and the conservation of energy \cite{MR07,MR08,MR09}, we see that $\theta({\bf x},t)$ satisfies
\begin{eqnarray}
{\partial_t\theta}-\nabla\cdot(\kappa \nabla\theta)&=&\sigma(\theta)(|\nabla\times {\bf B}|^2-\nabla\times {\bf B}\cdot({\bf U\times B})\nonumber\\
\label{equation:eq-14}
&-&R_\alpha\nabla\times {\bf B}\cdot(\frac{f({\bf x},t){\bf B}}{1+\gamma|{\bf B}|^2})),\ \ \ in\ (0,T]\times\Omega,
\end{eqnarray}
with the initial data and boundary conditions \cite{MR09}
\begin{eqnarray}
\label{equation:eq-15}
&&\theta({\bf x},0)=\theta_0,\ \ in \ \ \Omega,\\
\label{equation;eq-16}
&&\theta=\theta_0, \ \ on\ \ (0,T]\times\Gamma_1,\\
\label{equation:eq-17}
&&-\kappa\frac{\partial\theta}{\partial {\bf n}}=\zeta(\theta^4-\theta_0^4)+
\omega(\theta-\theta_0),\ \ on\ \ (0,T]\times\Gamma_2,
\end{eqnarray}
where $\theta_0\in L^\infty(\Omega\cup\Gamma_1)$ is the background temperature, $\partial\Omega=\Gamma_1\cup\Gamma_2$, $\zeta$ is the heat convection coefficient and $\omega$ the radiation coefficient, $\kappa$ is the thermal conductivity  and other physical constants such as
density and specific heat have been normalized. ${\bf n}$  is the unit outer normal to $\Omega$. $\theta_0$  and $\kappa$  are reasonable to assume that
\begin{equation}
\label{equation:eq-18}
\theta_0\geq \theta_{min}>0, \kappa\geq \kappa_{min}>0,
\end{equation}
where $\theta_{min},\kappa_{min} $are positive constants.
For convenience, we define for the positive temperature function
$$\Psi(\theta)= \zeta\theta^4+
\omega\theta:=(\zeta|\theta|^3+
\omega)\theta.$$
Let $\theta=\xi+\theta_0$, we have
$$\Psi(\theta)=\Psi(\xi+\theta_0),\ \ \Psi(\theta)-\Psi(\theta_0)=\zeta(\theta^4-\theta_0^4)+
\omega(\theta-\theta_0).$$
We also define
\begin{eqnarray*}
&&q(\xi):=\sigma(\theta)=\sigma(\xi+\theta_0),\\
&&\mathcal{K}({\bf B})=(|\nabla\times {\bf B}|^2-\nabla\times {\bf B}\cdot({\bf U\times B})
-R_\alpha\nabla\times {\bf B}\cdot(\frac{f({\bf x},t){\bf B}}{1+\gamma|{\bf B}|^2})),
\end{eqnarray*}
and
$$Q_T=(0,T]\times \Omega.$$

\par
The phenomenon of  magneto-heating has been the main point of interest for many researches. In \cite{MR13}, the authors aim to develop a mathematical model for magnetohydrodynamic flow of
biofluids through a hydrophobic micro-channel with periodically contracting and expanding
walls under the influence of an axially applied electric field, and
different temperature jump factors have also been
used to investigate the thermomechanical interactions at the fluid-solid interface. In \cite{MR14}, the authors aim is to investigate the mixed convection flow of an electrically conducting
and viscous incompressible fluid past an isothermal vertical surface with Joule heating in the presence
of a uniform transverse magnetic field fixed relative to the surface.
In \cite{MR15},they study the coupling of the equations of steady-state magnetohydrodynamics (MHD) with
the heat equation when the buoyancy effects due to temperature differences in the flow
as well as Joule effect and viscous heating are  taken into account, wher the existence results of weak solutions are presented  under certain conditions on the data and  some uniqueness results are derived.
In \cite{MR16}, the authors study a coupled system of Maxwell¡¯s equations with nonlinear heat equation while they employ time discretization based on the
Rothe's method to provide energy estimates for discretized system and prove the existence
of a weak solutions  to this coupled system with controlled Joule heating term.

\par
The most significant differences of our mathematical model compared to models stated in papers mentioned above can
be summed into three points:\\
$\bullet$ The  model  coupled with turbulent convection zone and the flow fields.\\
$\bullet$  The nonlinear term concluding $\alpha$-quench.\\
$\bullet$ The coefficient of magnetic diffusion  is temperature dependent and the temperature field is controlled by mixed nonlinear boundaries.

\par
The outline of the paper is as follows:



\section{Preliminaries}
For any $p\leq 1$, let $L^p(\Omega)$ be the sobolev space with the norm
$$\|p\|_{L^p(\Omega)}=(\int_\Omega|p({\bf x})|^pd{\bf x})^{1/p}.$$
For $p=\infty$, $L^\infty(\Omega)$ denotes the space of essentially bounded functions with the norm
$$\|u\|_{L^\infty(\Omega)}=esssup|u({\bf x})|.$$
For $p=2$, $L^2(\Omega)$ denotes the Hilbert space equipped with the inner product and norm
$$(u,v)=\int_\Omega u({\bf x})v({\bf x})d{\bf x},\ \ \ \|u\|_0:=\|u\|_{L^2(\Omega)}=(u,u)^{1/2}. $$
Define $H^m(\Omega)=\{u\in L^2(\Omega): D^{\bf \varsigma} u\in  L^2(\Omega), |{\bf \varsigma}|\leq m \}$, which is equipped with the following norm and semi-norm
$$\|u\|_{m,\Omega}=(\sum\limits_{|{\bf \varsigma}|\leq m}\|D^{\bf \varsigma}u\|_0^2)^{1/2},\ \ \ |u|_{m,\Omega}=(\sum\limits_{|{\bf \varsigma}|= m}\|D^{\bf \varsigma}u\|_0^2)^{1/2}.$$

The most frequently used spaces in the subsequent analysis
are the following two Sobolev spaces:
\begin{eqnarray*}
&&H(curl,\Omega)=\{{\bf u}\in L^2(\Omega)^3;\nabla\times{\bf u}\in  L^2(\Omega)  \},\\
&&H(div,\Omega)=\{{\bf u}\in L^2(\Omega)^3;\nabla\cdot{\bf u}\in  L^2(\Omega)  \}
\end{eqnarray*}
and their subspaces
\begin{eqnarray*}
&&H_0(curl,\Omega)=\{{\bf u}\in H(curl,\Omega), {\bf u}\times {\bf n}=0, \ on\ \   \partial\Omega \},\\
&&H(div_0,\Omega)=\{{\bf u}\in H(div,\Omega), \nabla\cdot{\bf u}=0\in  \Omega \},
\end{eqnarray*}
which are the equipped with the inner product
$$({\bf u},{\bf v})_{H(curl, \Omega)}=({\bf u},{\bf v})+(\nabla\times {\bf u},\nabla\times {\bf v}),$$
$$({\bf u},{\bf v})_{H(div, \Omega)}=({\bf u},{\bf v})+(\nabla\cdot {\bf u},\nabla\cdot {\bf v}), $$
and the norm
$$ \|{\bf u}\|_{H(curl,\Omega)}^2=\|{\bf u}\|_0^2+\|\nabla\times {\bf u}\|_0^2, \ \ \|{\bf u}\|_{H(div,\Omega)}^2=\|{\bf u}\|_0^2+\|\nabla\cdot {\bf u}\|_0^2$$

To treat the constraint equation $\nabla\cdot {\bf B}=0$, we shall need the following subspace
$$\mathcal{V}=H_0(curl,\Omega)\cap H(div_0,\Omega) $$
with the inner product and norm
$$({\bf u},{\bf v})_{\bf V}= ({\bf u},{\bf v})+(\nabla\times {\bf u},\nabla\times {\bf v})+(\nabla\cdot {\bf u},\nabla\cdot {\bf v}),\ \
\|{\bf u}\|_{\bf V}^2=\|{\bf u}\|_0^2+\|\nabla\times {\bf u}\|_0^2 +\|\nabla\cdot {\bf u}\|_0^2.$$
We also need define the functional space for the radiative and conductive heat equation
$$H^1_0(\Omega)=\{v\in H^1(\Omega),v|_{\Gamma_1}=0   \},$$
$$
\mathcal{Y}=\{v\in H^1_0(\Omega)\cap L^5(\Gamma_2) \}, \|v\|_{\mathcal{Y}}:=\|v\|_1+\|v\|_{L^5(\Gamma_2)},
$$
$$W^{0,4}(curl,\Omega)=\{{\bf u}\in L^2(\Omega)^3, \nabla\times {\bf u}\in  L^4(\Omega)^3  \}.$$

The coupling system (\ref{equation:eq-11})-(\ref{equation:eq-17}) can be
is equivalent to the following variational problem:
 Find ${\bf B}\in L^2(0,T;{\mathcal{V}}) $ and $\xi \in L^2(0,T;\mathcal{Y})$
 such that for any
 $ {\bf\Phi}\in\mathcal{V}, \Upsilon\in\mathcal{Y}\cap L^\infty(\Omega)$
\begin{eqnarray}
(\partial_t{ \bf B},{\Phi})&+&(\lambda(\xi+\theta_0)\nabla\times {\bf B},\nabla\times{\Phi})
+{\Lambda}(\nabla\cdot{\bf B},\nabla\cdot{\Phi})=R_\alpha(\frac{f({\bf x},t){\bf B}}{1+\gamma|{\bf B}|^2}, \nabla\times{\Phi})\nonumber\\
\label{equation:eq-19}
&+&({\bf U\times B},\nabla\times{\Phi}), \ \ \forall {\Phi\in \mathcal{V}},\\
(\partial_t\xi, \Upsilon)&+&(\kappa\nabla\xi, \nabla\Upsilon)+<(\Psi(\xi+\theta_0)-\Psi(\theta_0)),\Upsilon>_{\Gamma_2}\nonumber\\
\label{equation:eq-20}
&=&
(q(\xi)\mathcal{K}({\bf B}),\Upsilon)-(\kappa\nabla\theta_0,\nabla\Upsilon),\forall \Upsilon\in \mathcal{Y}\cap L^\infty(\Omega),
\end{eqnarray}
where $<(\Psi(\xi+\theta_0)-\Psi(\theta_0)),\Upsilon>_{\Gamma_2}
=\int_{\Gamma_2}(\Psi(\xi+\theta_0)-\Psi(\theta_0))\Upsilon ds.$

In this paper, we consider the well-posedness of the coupling system (\ref{equation:eq-19})-(\ref{equation:eq-20})  with the regularity technique.
In order to be convenient for the following proofs, we introduce two nonlinear operators defined by: for a given constant $\tau>0$, let $\mathcal{P}:\mathcal{V}\longrightarrow \mathcal{V}'$ and $\mathcal{L}:\mathcal{Y}\longrightarrow \mathcal{Y}'$ such that
\begin{eqnarray}
\label{equation:eq-21}
<\mathcal{P}{\bf A},{ \Phi}>:&=&\frac{1}{\tau}({ \bf A},{ \Phi})
+(\lambda(\xi+\theta_0)\nabla\times {\bf A},\nabla\times{\Phi})
+{\Lambda}(\nabla\cdot{\bf A},\nabla\cdot{\Phi})\nonumber\\
&-&R_\alpha(\frac{f({\bf x},t){\bf A}}{1+\gamma|{\bf A}|^2}, \nabla\times{\Phi})
-({\bf U\times A},\nabla\times{\Phi}),\forall {\bf A},\Phi\in \mathcal{V},\\
<\mathcal{L}\omega,\Upsilon>:&=&\frac{1}{\tau}(\omega,\Upsilon)+(\kappa\nabla\omega, \nabla\Upsilon)\nonumber\\
\label{equation:eq-22}
&+&<\Psi(\omega+\theta_0)-\Psi(\theta_0),\Upsilon>_{\Gamma_2},
\forall \omega,\Upsilon\in \mathcal{Y}.
\end{eqnarray}

\begin{lemma}
\label{lemma:Lem-1}
There exits a constant $C_1$ dependent of $R_\alpha,\lambda_M, \|f\|_{L^\infty(0,T;L^\infty(\Omega))}$,$\|{\bf U}\|_{L^\infty(0,T;L^\infty(\Omega))}$, $C_2$ dependent of $ \zeta,\omega, \Gamma_2$, $C_3$ dependent of $\kappa$,
and parameters $\tau$ such that
\begin{eqnarray}
\label{equation:eq-23}
&&\|\mathcal{P}{\bf B}\|_{\mathcal{V}'}\leq C_1\|{\bf B}\|_{\mathcal{V}},\ \
\|\mathcal{L}\xi\|_{\mathcal{Y}'}  \leq C_3\|\xi\|_1
+C_2(\sum_{j=1}^4\|\xi\|_{L^j(\Gamma_2)}^j).
\end{eqnarray}

\end{lemma}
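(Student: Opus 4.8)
The plan is to estimate both dual norms directly from their definitions, $\|\mathcal{P}{\bf B}\|_{\mathcal{V}'}=\sup_{{\bf 0}\neq\Phi\in\mathcal{V}}|\langle\mathcal{P}{\bf B},\Phi\rangle|/\|\Phi\|_{\mathcal{V}}$ and the analogous expression for $\mathcal{L}$ over $\Upsilon\in\mathcal{Y}$. The idea is to bound each defining term in (\ref{equation:eq-21})--(\ref{equation:eq-22}) by a product (coefficient)$\times\|{\bf B}\|_{\mathcal{V}}\|\Phi\|_{\mathcal{V}}$ (resp.\ by $\|\xi\|$-norms times $\|\Upsilon\|_{\mathcal{Y}}$), after which dividing by the test-function norm and taking the supremum yields the claimed inequalities. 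Since every term is linear in the test function, Cauchy--Schwarz together with the structural bounds on the coefficients carries most of the argument.

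For $\mathcal{P}$ I would treat the five terms separately. The mass term $\frac{1}{\tau}({\bf B},\Phi)$ and the divergence term ${\Lambda}(\nabla\cdot{\bf B},\nabla\cdot\Phi)$ are immediate from Cauchy--Schwarz and the definition of $\|\cdot\|_{\mathcal{V}}$. The diffusion term uses the uniform bound $\lambda\leq\lambda_M$, giving $|(\lambda\nabla\times{\bf B},\nabla\times\Phi)|\leq\lambda_M\|\nabla\times{\bf B}\|_0\|\nabla\times\Phi\|_0$. The two remaining terms carry the nonlinearity and the coupling: for the $\alpha$-quench term the key observation is the pointwise bound $\big|\frac{f{\bf B}}{1+\gamma|{\bf B}|^2}\big|\leq\|f\|_{L^\infty}|{\bf B}|$, valid because $1+\gamma|{\bf B}|^2\geq 1$, which linearizes the estimate to $R_\alpha\|f\|_{L^\infty}\|{\bf B}\|_0\|\nabla\times\Phi\|_0$; and for the convective term $|{\bf U}\times{\bf B}|\leq|{\bf U}||{\bf B}|$ gives $\|{\bf U}\|_{L^\infty}\|{\bf B}\|_0\|\nabla\times\Phi\|_0$. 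Summing, $|\langle\mathcal{P}{\bf B},\Phi\rangle|\leq(\frac{1}{\tau}+\lambda_M+{\Lambda}+R_\alpha\|f\|_{L^\infty}+\|{\bf U}\|_{L^\infty})\|{\bf B}\|_{\mathcal{V}}\|\Phi\|_{\mathcal{V}}$, producing the first inequality with $C_1$ having exactly the stated dependencies.

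For $\mathcal{L}$ the linear terms $\frac{1}{\tau}(\xi,\Upsilon)$ and $(\kappa\nabla\xi,\nabla\Upsilon)$ are again handled by Cauchy--Schwarz, using the boundedness of $\kappa$ and $\|\Upsilon\|_1\leq\|\Upsilon\|_{\mathcal{Y}}$, and yield the $C_3\|\xi\|_1$ contribution. The substantive work is the boundary functional $\langle\Psi(\xi+\theta_0)-\Psi(\theta_0),\Upsilon\rangle_{\Gamma_2}$. I would expand $\Psi(\xi+\theta_0)-\Psi(\theta_0)=\zeta((\xi+\theta_0)^4-\theta_0^4)+\omega\xi$ by the binomial formula (or, for the $|\cdot|^3(\cdot)$ form, via the mean-value estimate $|g(\xi+\theta_0)-g(\theta_0)|\leq 4(|\xi|+\|\theta_0\|_{L^\infty})^3|\xi|$ with $g(t)=|t|^3t$), collecting a polynomial in $\xi$ of degrees $1$ through $4$ whose coefficients are powers of $\theta_0$ controlled by $\|\theta_0\|_{L^\infty(\Omega\cup\Gamma_1)}$. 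Each resulting term $\int_{\Gamma_2}|\xi|^j|\Upsilon|\,ds$ is then estimated by Hölder's inequality, pairing $\Upsilon$ against the $L^5(\Gamma_2)$ factor built into $\|\Upsilon\|_{\mathcal{Y}}$; dividing by $\|\Upsilon\|_{\mathcal{Y}}$ absorbs that factor and leaves the powers of the $\Gamma_2$-norms of $\xi$ of degrees up to four, i.e.\ the sum $C_2\sum_{j=1}^4\|\xi\|_{L^j(\Gamma_2)}^j$ with $C_2$ depending only on $\zeta,\omega,\Gamma_2$.

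I expect the boundary term to be the main obstacle: one must choose the Hölder exponents consistently so that the quartic radiation nonlinearity closes against the trace integrability $L^5(\Gamma_2)$ encoded in $\mathcal{Y}$, while tracking the cross terms $\theta_0^{4-j}\xi^j$ where the $L^\infty$ control of the background temperature $\theta_0$ is indispensable. Once the exponents are fixed, the remainder reduces to routine Cauchy--Schwarz bookkeeping.
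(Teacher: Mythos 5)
Your proposal is correct and follows essentially the same route as the paper: term-by-term Cauchy--Schwarz for $\mathcal{P}$ using the pointwise bounds $1+\gamma|{\bf B}|^2\geq 1$ and $\lambda\leq\lambda_M$, and for $\mathcal{L}$ the same binomial/mean-value expansion of $\Psi(\xi+\theta_0)-\Psi(\theta_0)$ into a degree-$1$-through-$4$ polynomial in $\xi$ with coefficients controlled by $\|\theta_0\|_{L^\infty}$, followed by H\"older on $\Gamma_2$. The only cosmetic difference is that the paper pairs the boundary term against $\|\Upsilon\|_{L^2(\Gamma_2)}$ rather than the $L^5(\Gamma_2)$ factor of $\|\Upsilon\|_{\mathcal{Y}}$; both choices land on the stated form of the estimate.
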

\begin{proof}
Noting that $\frac{f({\bf x},t)}{1+\gamma|{\bf B}|^2}\leq 1$, $\lambda(\xi+\theta_0)\leq \lambda_M$ and
using Chaucy-Schwarz inequality, we have
\begin{eqnarray}
&&<\mathcal{P}{\bf B},{ \Phi}>:=\frac{1}{\tau}({ \bf B},{ \Phi})
+(\lambda\nabla\times {\bf B},\nabla\times{\Phi})
+{\Lambda}(\nabla\cdot{\bf B},\nabla\cdot{\Phi})\nonumber\\
&-&R_\alpha(\frac{f({\bf x},t){\bf B}}{1+\gamma|{\bf B}|^2}, \nabla\times{\Phi})
-({\bf U\times B},\nabla\times{\Phi}),\nonumber\\
&\leq& \frac{1}{\tau}\|{ \bf B}\|_0\|\Phi\|_0
+\lambda_M\|\nabla\times {\bf B}\|_0\|\nabla\times{\Phi}\|_0
+{\Lambda}\|\nabla\cdot{\bf B}\|_0\|\nabla\cdot{\Phi}\|_0\nonumber\\
&+&R_\alpha\|f({\bf x},t)\|_{{L^\infty(0,T;L^\infty(\Omega))}}
\|{ \bf B}\|_0\|\nabla\times{\Phi}\|_0\nonumber\\
&+&\|U({\bf x},t)\|_{L^\infty(0,T;L^\infty(\Omega))}
\|{ \bf B}\|_0\|\nabla\times{\Phi}\|_0\nonumber\\
\label{equation:eq-24}
&\leq&C_1\|{\bf B}\|_{\mathcal{V}}\|{\bf \Phi}\|_{\mathcal{V}},
\end{eqnarray}
where $C_1=\max\{\frac{1}{\tau},  \lambda_M,R_\alpha\|f({\bf x},t)\|_{L^\infty(0,T;L^\infty(\Omega))},\|U({\bf x},t)\|_{L^\infty(0,T;L^\infty(\Omega))}\}.$
\par
For the function $\theta_0>0$,  we have
\begin{eqnarray}
&&|\Psi(\xi+\theta_0)-\Psi(\theta_0)|=|
(\zeta|\xi+\theta_0|^3+
\omega)(\xi+\theta_0)-(\zeta|\theta_0|^3+
\omega)\theta_0|\nonumber\\
\label{equation:eq-25}
&&\leq |\xi|(\zeta|\xi+\theta_0|^3+3\zeta\xi^2\theta_0+3\zeta\xi\theta_0^2
+3\zeta\theta_0^3+\omega).
\end{eqnarray}
Then there exists a constant $C_2$ dependent
of $ \zeta,\omega, \Gamma_2$ and $\|\theta_0\|_{L^\infty(\Omega)}$
such that
\begin{eqnarray}
&&|\int_{\Gamma_2}(\Psi(\xi+\theta_0)-\Psi(\theta_0))\Upsilon ds|\nonumber\\
&&\leq \|(\Psi(\xi+\theta_0)-\Psi(\theta_0))\|_{L^2(\Gamma_2)}
\|\Upsilon\|_{L^2(\Gamma_2)}\nonumber\\
\label{equation:eq-26}
&&\leq C_2\|\Upsilon\|_{L^2(\Gamma_2)}\sum_{j=1}^4\|\xi\|_{L^j(\Gamma_2)}^j.
\end{eqnarray}
Therefore, there exists a constant $C_3$ dependent of $\tau$ and $\kappa$ so that the boundness of the nonlinear operator $\mathcal{L}$ can be estimated by
\begin{eqnarray}
<\mathcal{L}\xi,\Upsilon>&\leq & \frac{1}{\tau}\|\xi\|_0\|\Upsilon\|_0+\kappa\|\xi\|_1\|\Upsilon\|_1
+(\sum_{j=1}^4\|\xi\|_{L^j(\Gamma_2)}^j)\|\Upsilon\|_{L^2(\Gamma_2)}\nonumber\\
&\leq& \max(\frac{1}{\tau},\kappa)\|\xi\|_1\|\Upsilon\|_1
+C_2(\sum_{j=1}^4\|\xi\|_{L^j(\Gamma_2)}^j)\|\Upsilon\|_{L^2(\Gamma_2)}\nonumber\\
\label{equation:eq-27}
&\leq& C_3\|\xi\|_1\|\Upsilon\|_1
+C_2(\sum_{j=1}^4\|\xi\|_{L^j(\Gamma_2)}^j)\|\Upsilon\|_{L^2(\Gamma_2)}
\end{eqnarray}

\end{proof}


\begin{lemma}
\label{lemma:Lem-2}
There exist a positive constant $C_4$ depending on $\tau, \kappa,\lambda_0, R_\alpha,
\|{ f}\|_{L^\infty(\Omega)},\|{\bf U}\|_{L^\infty(\Omega)}$
 and $C_5$ depending on
$\tau, \kappa$
  such that
\begin{eqnarray}
\label{equation:eq-28}
<\mathcal{P}{\bf B},{\bf B}>\geq  C_4\|{\bf B}\|_{\mathcal{V}}^2
 \ \ <\mathcal{L}\xi,\xi>\geq C_5\|\xi\|^2_1
 +\frac{\zeta}{8}\|\xi\|^5_{L^5(\Gamma_2)}.
\end{eqnarray}
\end{lemma}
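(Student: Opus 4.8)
The plan is to test each operator against its own argument, keeping the strictly positive principal (diffusion) terms and absorbing the lower–order and cross terms by the Cauchy--Schwarz and Young inequalities, while the nonlinear boundary contribution is handled by an elementary scalar monotonicity inequality. Both estimates are pointwise/quadratic in nature, so no compactness or limiting argument is needed here; the only genuine input is one sharp algebraic inequality.

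First, for $\mathcal{P}$ I would put ${\bf A}={\Phi}={\bf B}$ in the definition \eqref{equation:eq-21}. The three quadratic terms $\frac{1}{\tau}\|{\bf B}\|_0^2$, $(\lambda(\xi+\theta_0)\nabla\times{\bf B},\nabla\times{\bf B})\ge\lambda_0\|\nabla\times{\bf B}\|_0^2$ (using $\lambda\ge\lambda_0$) and $\Lambda\|\nabla\cdot{\bf B}\|_0^2$ supply exactly the three pieces of $\|{\bf B}\|_{\mathcal V}^2$. For the remaining $\alpha$-quench and convection terms I would use $\frac{f}{1+\gamma|{\bf B}|^2}\le\|f\|_{L^\infty(\Omega)}$ and $|{\bf U}\times{\bf B}|\le\|{\bf U}\|_{L^\infty(\Omega)}|{\bf B}|$, so that their sum is bounded by $(R_\alpha\|f\|_{L^\infty(\Omega)}+\|{\bf U}\|_{L^\infty(\Omega)})\,\|{\bf B}\|_0\|\nabla\times{\bf B}\|_0$. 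Young's inequality then splits this as $\tfrac{\lambda_0}{2}\|\nabla\times{\bf B}\|_0^2+\tfrac{(R_\alpha\|f\|_{L^\infty(\Omega)}+\|{\bf U}\|_{L^\infty(\Omega)})^2}{2\lambda_0}\|{\bf B}\|_0^2$; the curl half is absorbed into $\lambda_0\|\nabla\times{\bf B}\|_0^2$, leaving $\tfrac{\lambda_0}{2}\|\nabla\times{\bf B}\|_0^2$, and the coefficient of $\|{\bf B}\|_0^2$ becomes $\tfrac{1}{\tau}-\tfrac{(R_\alpha\|f\|_{L^\infty(\Omega)}+\|{\bf U}\|_{L^\infty(\Omega)})^2}{2\lambda_0}$, which is positive once $\tau$ is taken small enough. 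Collecting these gives the first estimate with $C_4=\min\{\tfrac1\tau-\tfrac{(R_\alpha\|f\|_{L^\infty(\Omega)}+\|{\bf U}\|_{L^\infty(\Omega)})^2}{2\lambda_0},\ \tfrac{\lambda_0}{2},\ \Lambda\}$.

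For $\mathcal{L}$ I would set $\omega=\Upsilon=\xi$ in \eqref{equation:eq-22}. The diffusion part gives $\tfrac1\tau\|\xi\|_0^2+(\kappa\nabla\xi,\nabla\xi)\ge\min\{\tfrac1\tau,\kappa_{min}\}(\|\xi\|_0^2+\|\nabla\xi\|_0^2)=:C_5\|\xi\|_1^2$. The crux is the boundary term. Writing $\Psi(s)=(\zeta|s|^3+\omega)s=\zeta|s|^3s+\omega s$ and setting $a=\xi+\theta_0$, $b=\theta_0$ (so $a-b=\xi$), the integrand is $(\Psi(a)-\Psi(b))\xi=\zeta(|a|^3a-|b|^3b)(a-b)+\omega(a-b)^2$. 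The key tool is the elementary inequality, valid for all real $a,b$ and $p\ge2$,
\[
(|a|^{p-2}a-|b|^{p-2}b)(a-b)\ \ge\ 2^{2-p}\,|a-b|^p,
\]
whose constant for $p=5$ is $2^{-3}=\tfrac18$; this yields pointwise $(|a|^3a-|b|^3b)(a-b)\ge\tfrac18|\xi|^5$, while $\omega(a-b)^2=\omega\xi^2\ge0$. Integrating over $\Gamma_2$ and discarding the nonnegative $\omega$-term produces $<\Psi(\xi+\theta_0)-\Psi(\theta_0),\xi>_{\Gamma_2}\ge\tfrac{\zeta}{8}\|\xi\|_{L^5(\Gamma_2)}^5$, which together with the diffusion bound gives the second estimate.

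The main obstacle is the boundary term: the entire second estimate hinges on the sharp scalar monotonicity inequality for $s\mapsto|s|^3s$ producing precisely the constant $\tfrac18$ and the $L^5(\Gamma_2)$ power, so I would verify it carefully in the scalar case (for instance by exploiting the convexity of $s\mapsto|s|^5$, or by a direct case analysis on the signs of $a$ and $b$). By comparison the coercivity of $\mathcal{P}$ is routine, the only delicate point being that the smallness restriction on $\tau$ is what keeps the $\|{\bf B}\|_0^2$ coefficient positive, consistent with the stated dependence of $C_4$ on $\tau,\lambda_0,R_\alpha,\|f\|_{L^\infty(\Omega)}$ and $\|{\bf U}\|_{L^\infty(\Omega)}$.
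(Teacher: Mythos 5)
Your proposal is correct and follows essentially the same route as the paper: test each operator against its own argument, keep the $\frac{1}{\tau}\|\cdot\|_0^2$, $\lambda_0\|\nabla\times\cdot\|_0^2$, $\Lambda\|\nabla\cdot\,\cdot\|_0^2$ and $\kappa\|\nabla\cdot\|_0^2$ terms, absorb the $\alpha$-quench and convection cross terms by Cauchy--Schwarz and Young (the paper keeps free parameters $\epsilon_1,\epsilon_2$ where you fix the Young weight at $\lambda_0$, an inessential difference), and lower-bound the boundary term by the monotonicity of $\Psi$. The only substantive difference is that you explicitly justify the constant $\tfrac{\zeta}{8}$ via the scalar inequality $(|a|^{p-2}a-|b|^{p-2}b)(a-b)\ge 2^{2-p}|a-b|^p$ with $p=5$, a step the paper merely asserts.
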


\begin{proof} From Young inequality and $\lambda(\xi+\theta_0)\geq \lambda_0$, we have
\begin{eqnarray}
<\mathcal{P}{\bf B},{\bf B}>&=&\frac{1}{\tau}({ \bf B},{ \bf B})
+(\lambda\nabla\times {\bf B},\nabla\times{\bf B})
+{\Lambda}(\nabla\cdot{\bf B},\nabla\cdot{\bf B})\nonumber\\
&-&R_\alpha(\frac{f({\bf x},t){\bf B}}{1+\gamma|{\bf B}|^2}, \nabla\times{\bf B})
+({\bf U\times \bf B},\nabla\times{\bf B})\nonumber\\
&\geq& \frac{1}{\tau}\|{\bf B}\|_0^2+\lambda_0\|\nabla\times {\bf B}\|_0^2
+{\Lambda}\|\nabla\cdot{\bf B}\|_0^2\nonumber\\
&-&R_\alpha\|f({\bf x},t)\|_{L^\infty(\Omega)}\|{\bf B}\|_0\|\nabla\times{\bf B}\|_0
-\|{\bf U}\|_{L^\infty(\Omega)}\|{\bf B}\|_0\|\nabla\times{\bf B}\|_0\nonumber\\
&\geq & \frac{1}{\tau}\|{\bf B}\|_0^2+\lambda_0\|\nabla\times {\bf B}\|_0^2
+{\Lambda}\|\nabla\cdot{\bf B}\|_0^2
-\frac{R_\alpha\|f\|_{L^\infty(\Omega)}}{4\epsilon_1}\|{\bf B}\|_0^2\nonumber\\
&-&\epsilon_1R_\alpha\|f\|_{L^\infty(\Omega)}\|\nabla\times{\bf B}\|_0^2
-\frac{\|{\bf U}\|_{L^\infty(\Omega)}}{4\epsilon_2}\|{\bf B}\|_0^2
-\epsilon_2\|{\bf U}\|_{L^\infty(\Omega)}\|\nabla\times{\bf B}\|_0^2\nonumber\\
&=&(\frac{1}{\tau}-\frac{R_\alpha\|f\|_{L^\infty(\Omega)}}{4\epsilon_1}
-\frac{\|{\bf U}\|_{L^\infty(\Omega)}}{4\epsilon_2})\|{\bf B}\|_0^2
+{\Lambda}\|\nabla\cdot{\bf B}\|_0^2\nonumber\\
&+&(\lambda_0-\epsilon_1R_\alpha\|f\|_{L^\infty(\Omega)}
-\epsilon_2\|{\bf U}\|_{L^\infty(\Omega)})\|\nabla\times{\bf B}\|_0^2\nonumber\\
\label{equation:eq-29}
&\geq & C_4\|{\bf B}\|_{\mathcal{V}}^2,
\end{eqnarray}
after taking $\epsilon_1,\epsilon_2$ and $\tau$ such that
\begin{eqnarray*}
C_4=\min (\frac{1}{\tau}-\frac{R_\alpha\|f\|_{L^\infty(\Omega)}}{4\epsilon_1}-\frac{\|{\bf U}\|_{L^\infty(\Omega)}}{4\epsilon_2}, \Lambda,\lambda_0-\epsilon_1R_\alpha\|f\|_{L^\infty(\Omega)}-\epsilon_2\|{\bf U}\|_{L^\infty(\Omega)}  ).
\end{eqnarray*}


Now we consider the coercive of the nonlinear operator $\mathcal{L}$.
For the function
$$\Psi(t)=\zeta |t|^3t+\omega t,\ \ \Psi'(t)=4\zeta|t|^3+\omega>0,$$
we know that
$\Psi(t)$ is a monotone function, and
there holds
$$<\Psi(v)-\Psi(w),v-w>|_{\Gamma_2}\geq \frac{\zeta}{8}\|v-w\|^5_{L^5(\Gamma_2)}
+\omega\|v-w\|^2_{L^2(\Gamma_2)}, $$
then we have
$$<\Psi(\xi+\theta_0)-\Psi(\theta_0),\xi>|_{\Gamma_2}
\geq \frac{\zeta}{8}\|\xi\|^5_{L^5(\Gamma_2)}+\omega \|\xi\|^2_{L^2(\Gamma_2)},$$
and
$$ <\Psi(v+\theta_0)-\Psi(w+\theta_0),v-w>|_{\Gamma_2}
\geq \frac{\zeta}{8}\|v-w\|^5_{L^5(\Gamma_2)}+\omega\|v-w\|^2_{L^2(\Gamma_2)}.$$
Therefore, we have
\begin{eqnarray}
<\mathcal{L}\xi,\xi>&=&\frac{1}{\tau}(\xi,\xi)+(\kappa\nabla\xi, \nabla\xi)
+<\Psi(\xi+\theta_0)-\Psi(\theta_0),\xi>_{\Gamma_2}\nonumber\\
&\geq& \frac{1}{\tau}\|\xi\|_0^2+\kappa\|\nabla\xi\|^2_0
+\frac{\zeta}{8}\|\xi\|^5_{L^5(\Gamma_2)}+\omega \|\xi\|_{L^2(\Gamma_2)}^2\nonumber\\
\label{equation:eq-30}
&\geq& C_5\|\xi\|^2_1+\frac{\zeta}{8}\|\xi\|^5_{L^5(\Gamma_2)},
\end{eqnarray}
where $C_5$ is take as the $C_5=\min\{\frac{1}{\tau},\kappa \}$.
\end{proof}

\begin{lemma}
\label{lemma:Lem-31}
For the vector ${\bf A,B}$ and the parameter $\gamma>0$, there holds
\begin{eqnarray*}
|\frac{\bf B}{1+\gamma|{\bf B}|^2}
-\frac{\bf A}{1+\gamma|{\bf A}|^2}|\leq \frac{9}{4}|{\bf B}-{\bf A}|.
\end{eqnarray*}
\end{lemma}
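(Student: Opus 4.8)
The plan is to read the claim as a global Lipschitz estimate for the vector field ${\bf x}\mapsto {\bf x}/(1+\gamma|{\bf x}|^2)$ and to prove it by a purely algebraic argument, which avoids any appeal to differentiability or operator norms. First I would place the difference over a common denominator,
\[
\frac{\bf B}{1+\gamma|{\bf B}|^2}-\frac{\bf A}{1+\gamma|{\bf A}|^2}
=\frac{{\bf B}(1+\gamma|{\bf A}|^2)-{\bf A}(1+\gamma|{\bf B}|^2)}{(1+\gamma|{\bf A}|^2)(1+\gamma|{\bf B}|^2)},
\]
and then regroup the numerator into the symmetric form
\[
{\bf B}(1+\gamma|{\bf A}|^2)-{\bf A}(1+\gamma|{\bf B}|^2)
=\frac{1}{2}({\bf B}-{\bf A})(2+\gamma|{\bf A}|^2+\gamma|{\bf B}|^2)
+\frac{\gamma}{2}({\bf B}+{\bf A})(|{\bf A}|^2-|{\bf B}|^2).
\]
The purpose of this particular splitting is that every surviving factor can be controlled by $|{\bf B}-{\bf A}|$; a naive grouping would break the symmetry between ${\bf A}$ and ${\bf B}$ and leave a worse constant.

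Next I would take norms and apply the triangle inequality to the two pieces. Writing $a=|{\bf A}|$ and $b=|{\bf B}|$, I would use the elementary bound $\big|\,|{\bf A}|-|{\bf B}|\,\big|\le|{\bf A}-{\bf B}|$ together with the factorization $|{\bf A}|^2-|{\bf B}|^2=(|{\bf A}|-|{\bf B}|)(|{\bf A}|+|{\bf B}|)$ and $|{\bf A}+{\bf B}|\le a+b$ to pull $|{\bf B}-{\bf A}|$ out of the numerator. This reduces the vector inequality to the scalar estimate
\[
\Big|\frac{\bf B}{1+\gamma|{\bf B}|^2}-\frac{\bf A}{1+\gamma|{\bf A}|^2}\Big|
\le |{\bf B}-{\bf A}|\,\frac{1+\gamma(a^2+b^2+ab)}{(1+\gamma a^2)(1+\gamma b^2)},
\]
so that it only remains to bound the scalar multiplier by $9/4$.

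For the scalar step I would use the algebraic identity
\[
\frac{1+\gamma(a^2+b^2+ab)}{(1+\gamma a^2)(1+\gamma b^2)}
=1+\frac{\gamma ab\,(1-\gamma ab)}{(1+\gamma a^2)(1+\gamma b^2)},
\]
which collapses the two-variable problem to essentially one variable. When $\gamma ab\ge 1$ the added term is nonpositive and the multiplier is at most $1$; when $\gamma ab<1$, I would invoke the AM--GM bound $(1+\gamma a^2)(1+\gamma b^2)\ge(1+\gamma ab)^2$ and set $p=\gamma ab\in[0,1)$, reducing matters to maximizing $p(1-p)/(1+p)^2$ over $p\ge 0$. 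A short one-variable calculation gives its maximum $1/8$ at $p=1/3$, whence the multiplier is at most $9/8$, comfortably below $9/4$.

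The routine parts are the common-denominator computation and the final single-variable maximization; the step I expect to be the crux is the symmetric regrouping of the numerator, since the whole estimate hinges on arranging the algebra so that $|{\bf B}-{\bf A}|$ factors out cleanly while keeping the residual coefficient uniformly bounded. I would also note that the constant $9/4$ is far from sharp (the genuine Lipschitz constant is in fact $1$), so any of these routes, and even cruder estimates, close the argument with room to spare.
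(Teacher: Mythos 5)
Your proof is correct, and it reaches the paper's key intermediate quantity by a different and arguably cleaner route. The paper starts from the asymmetric splitting $\frac{{\bf B}}{1+\gamma|{\bf B}|^2}-\frac{{\bf A}}{1+\gamma|{\bf A}|^2}=\frac{{\bf B}-{\bf A}}{1+\gamma|{\bf B}|^2}+{\bf A}\,\frac{\gamma(|{\bf A}|^2-|{\bf B}|^2)}{(1+\gamma|{\bf A}|^2)(1+\gamma|{\bf B}|^2)}$, obtains a bound with numerator $1+2\gamma|{\bf A}|^2+\gamma|{\bf A}||{\bf B}|$, swaps the roles of ${\bf A}$ and ${\bf B}$, and combines the two to land on the same symmetric quotient $\bigl(1+\gamma(|{\bf A}|^2+|{\bf B}|^2+|{\bf A}||{\bf B}|)\bigr)/\bigl((1+\gamma|{\bf A}|^2)(1+\gamma|{\bf B}|^2)\bigr)$ that you derive directly from your symmetric regrouping of the numerator; your version avoids the slightly informal ``by symmetry, therefore'' step. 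Where the two arguments genuinely diverge is the final scalar estimate: the paper bounds the symmetric numerator by $(1+\tfrac{3}{2}\gamma|{\bf A}|^2)(1+\tfrac{3}{2}\gamma|{\bf B}|^2)$ (an AM--GM in disguise) and concludes with the constant $\tfrac{9}{4}$, whereas your identity $\frac{1+\gamma(a^2+b^2+ab)}{(1+\gamma a^2)(1+\gamma b^2)}=1+\frac{\gamma ab(1-\gamma ab)}{(1+\gamma a^2)(1+\gamma b^2)}$ plus the one-variable maximization of $p(1-p)/(1+p)^2$ yields the sharper constant $\tfrac{9}{8}$. Since the lemma only claims $\tfrac{9}{4}$, both close the argument; your remark that the true Lipschitz constant is $1$ (visible from the Jacobian, whose eigenvalues are $\frac{1}{1+\gamma r^2}$ and $\frac{1-\gamma r^2}{(1+\gamma r^2)^2}$, both of modulus at most $1$) is also correct and confirms that neither constant is optimal.
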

\begin{proof}
By calculating, we have
\begin{eqnarray*}
&&|\frac{\bf B}{1+\gamma|{\bf B}|^2}
-\frac{\bf A}{1+\gamma|{\bf A}|^2}|\\
&&\leq \frac{|{\bf B-A}|}{1+\gamma|{\bf B}|^2}+\frac{\gamma|{\bf A}|(|{\bf A}|-|{\bf B}|)(|{\bf A}|+|{\bf B}|)}{(1+\gamma|{\bf A}|^2)(1+\gamma|{\bf B}|^2)}\\
&&\leq |{\bf B-A}|\frac{(1+2\gamma|{\bf A}|^2+\gamma|{\bf A}||{\bf B}|)}{(1+\gamma|{\bf A}|^2)(1+\gamma|{\bf B}|^2)}.
\end{eqnarray*}
By the symmetry, we have
\begin{eqnarray*}
&&|\frac{\bf A}{1+\gamma|{\bf A}|^2}
-\frac{\bf B}{1+\gamma|{\bf B}|^2}|\\
&&\leq |{\bf B-A}|\frac{(1+2\gamma|{\bf B}|^2+\gamma|{\bf A}||{\bf B}|)}{(1+\gamma|{\bf A}|^2)(1+\gamma|{\bf B}|^2)}.
\end{eqnarray*}
Therefore, we have
\begin{eqnarray*}
&&|\frac{\bf A}{1+\gamma|{\bf A}|^2}
-\frac{\bf B}{1+\gamma|{\bf B}|^2}|\\
&&\leq |{\bf B-A}|\frac{(1+\gamma|{\bf A}|^2+\gamma|{\bf B}|^2+\gamma|{\bf A}||{\bf B}|)}{(1+\gamma|{\bf A}|^2)(1+\gamma|{\bf B}|^2)}\\
&&\leq |{\bf B-A}| \frac{(1+\frac{3}{2}\gamma|{\bf A}|^2)(1+\frac{3}{2}\gamma|{\bf B}|^2)}{(1+\gamma|{\bf A}|^2)(1+\gamma|{\bf B}|^2)}\\
&&\leq \frac{9}{4}|{\bf B-A}|.
\end{eqnarray*}

\end{proof}

\begin{lemma}
\label{lemma:Lem-3}
The operator $\mathcal{P}$ and $\mathcal{L}$ is strictly
 monotone in the sense that
 \begin{eqnarray}\label{equation:eq-31}
<\mathcal{P}{\bf B}-\mathcal{P}{\bf A},{\bf B-A}>
\geq  C_6\|{\bf B-A}\|_{\mathcal{V}}^2,
\end{eqnarray}
where $C_6$ is taken as $\min\{(\frac{1}{\tau}-\frac{R_\alpha\|f\|_{L^\infty(\Omega)}}{4\epsilon_3}
-\frac{\|{\bf U}\|_{L^\infty(\Omega)}}{4\epsilon_4}),  {\Lambda},(\lambda_0-\epsilon_3R_\alpha\|f\|_{L^\infty(\Omega)}
-\epsilon_4\|{\bf U}\|_{L^\infty(\Omega)})\}$,
And
\begin{eqnarray}
\label{equation:eq-32}
<\mathcal{L}v-\mathcal{L}w,v-w>
\geq  C_7\|v-w\|_1^2
+\frac{\zeta}{8}\|v-w\|^5_{L^5(\Gamma_2)},
\end{eqnarray}
where the constant $C_7$ can be taken as $C_7=\min (\tau^{-1},\kappa)$.
\end{lemma}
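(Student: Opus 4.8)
\noindent
The plan is to treat the two operators separately and, in each case, to split the pairing into a bilinear (or linear) part that reproduces the coercive quadratic form already analysed in the proof of Lemma~\ref{lemma:Lem-2}, plus a genuinely nonlinear part that is the only place where the argument departs from a verbatim repetition of that coercivity estimate.

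For the magnetic operator, I would first expand $\langle\mathcal{P}{\bf B}-\mathcal{P}{\bf A},{\bf B-A}\rangle$ using the definition~(\ref{equation:eq-21}). The mass term $\frac{1}{\tau}(\cdot,\cdot)$, the diffusion term $(\lambda(\xi+\theta_0)\nabla\times\cdot,\nabla\times\cdot)$ and the divergence term $\Lambda(\nabla\cdot\,\cdot,\nabla\cdot\,\cdot)$ are all bilinear, so their contribution to the difference is exactly the quadratic form evaluated at ${\bf B-A}$, giving $\frac{1}{\tau}\|{\bf B-A}\|_0^2+\lambda_0\|\nabla\times({\bf B-A})\|_0^2+\Lambda\|\nabla\cdot({\bf B-A})\|_0^2$ after using $\lambda(\xi+\theta_0)\geq\lambda_0$. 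The Lorentz-type term is linear, so its difference is $-({\bf U}\times({\bf B-A}),\nabla\times({\bf B-A}))$, which I bound as in Lemma~\ref{lemma:Lem-2} by $\|{\bf U}\|_{L^\infty(\Omega)}\|{\bf B-A}\|_0\|\nabla\times({\bf B-A})\|_0$ and then split via Young's inequality with parameter $\epsilon_4$. The only new ingredient is the $\alpha$-quench term, whose difference is $-R_\alpha\big(\frac{f{\bf B}}{1+\gamma|{\bf B}|^2}-\frac{f{\bf A}}{1+\gamma|{\bf A}|^2},\nabla\times({\bf B-A})\big)$; applying Lemma~\ref{lemma:Lem-31} pointwise together with $|f|\leq\|f\|_{L^\infty(\Omega)}$ bounds it by $\frac{9}{4}R_\alpha\|f\|_{L^\infty(\Omega)}\|{\bf B-A}\|_0\|\nabla\times({\bf B-A})\|_0$, which I again absorb via Young's inequality with parameter $\epsilon_3$. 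Collecting the surviving coefficients reproduces the coercive structure recorded in the constant $C_6$, provided $\epsilon_3,\epsilon_4,\tau$ are chosen so that each coefficient remains positive.

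For the thermal operator, I would expand $\langle\mathcal{L}v-\mathcal{L}w,v-w\rangle$ using~(\ref{equation:eq-22}). The interior terms give $\frac{1}{\tau}\|v-w\|_0^2+\kappa\|\nabla(v-w)\|_0^2\geq C_7\|v-w\|_1^2$ with $C_7=\min(\tau^{-1},\kappa)$. For the boundary term I would reuse the monotonicity inequality for $\Psi$ established inside the proof of Lemma~\ref{lemma:Lem-2}: since $\Psi'(t)=4\zeta|t|^3+\omega>0$, one has $\langle\Psi(v+\theta_0)-\Psi(w+\theta_0),v-w\rangle_{\Gamma_2}\geq\frac{\zeta}{8}\|v-w\|^5_{L^5(\Gamma_2)}+\omega\|v-w\|^2_{L^2(\Gamma_2)}$. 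Discarding the nonnegative boundary $L^2$ contribution then yields exactly~(\ref{equation:eq-32}).

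The main obstacle is the $\alpha$-quench nonlinearity in $\mathcal{P}$: because it is not itself monotone, it only satisfies the Lipschitz bound of Lemma~\ref{lemma:Lem-31} and can contribute with either sign, so it must be controlled entirely against the coercive mass and diffusion terms. The delicate point is therefore the simultaneous choice of the splitting parameters $\epsilon_3,\epsilon_4$ and of the time step $\tau$ so that both $\frac{1}{\tau}-\frac{R_\alpha\|f\|_{L^\infty(\Omega)}}{4\epsilon_3}-\frac{\|{\bf U}\|_{L^\infty(\Omega)}}{4\epsilon_4}$ and $\lambda_0-\epsilon_3 R_\alpha\|f\|_{L^\infty(\Omega)}-\epsilon_4\|{\bf U}\|_{L^\infty(\Omega)}$ stay strictly positive; this is precisely the compatibility requirement built into the definition of $C_6$ and is the same balancing already performed for the coercivity constant $C_4$. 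For $\mathcal{L}$ there is no comparable difficulty, since the boundary monotonicity is inherited directly from the convexity-type property of $\Psi$.
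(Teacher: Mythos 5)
Your proposal is correct and follows essentially the same route as the paper's proof: the bilinear terms of $\mathcal{P}$ are reduced to the coercive quadratic form, the $\alpha$-quench difference is controlled via the $\tfrac{9}{4}$-Lipschitz bound of Lemma~\ref{lemma:Lem-31} and Young's inequality with parameters $\epsilon_3,\epsilon_4$, and the boundary term of $\mathcal{L}$ is handled by the monotonicity inequality for $\Psi$ already derived in Lemma~\ref{lemma:Lem-2}. The balancing condition you state for $\epsilon_3,\epsilon_4,\tau$ is exactly the one encoded in the paper's constant $C_6$.
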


\begin{proof}From the Young inequality and Lemma 3, we have
\begin{eqnarray}
&&<\mathcal{P}{\bf B}-\mathcal{P}{\bf A},{\bf B-A}>\nonumber\\
&=&\frac{1}{\tau}({\bf B-A},{\bf B-A})
+\lambda(\nabla\times({\bf B-A}), \nabla\times({\bf B-A}))
+{\Lambda}(\nabla\cdot({\bf B-A}),\nabla\cdot({\bf B-A}))\nonumber\\
&-&R_\alpha(\frac{f({\bf x},t)}{1+\gamma|{\bf B}|^2}{\bf B}
-\frac{f({\bf x},t)}{1+\gamma|{\bf A}|^2}{\bf A},\nabla\times({\bf B-A}))
-({\bf U}\times ({\bf B-A}),\nabla\times({\bf B-A}))\nonumber\\
&\geq &\frac{1}{\tau}\|{\bf B-A}\|_0^2+\lambda_0\|\nabla\times ({\bf B-A})\|_0^2
+{\Lambda}\|\nabla\cdot({\bf B-A})\|_0^2
-\frac{9R_\alpha\|f\|_{L^\infty(\Omega)}}{16\epsilon_3}\|{\bf B-A}\|_0^2\nonumber\\
&-&\epsilon_3R_\alpha\|f\|_{L^\infty(\Omega)}\|\nabla\times({\bf B-A})\|_0^2
-\frac{\|{\bf U}\|_{L^\infty(\Omega)}}{4\epsilon_4}\|{\bf {\bf B-A}}\|_0^2
-\epsilon_4\|{\bf U}\|_{L^\infty(\Omega)}\|\nabla\times({\bf B-A})\|_0^2\nonumber\\
&=&(\frac{1}{\tau}-\frac{R_\alpha\|f\|_{L^\infty(\Omega)}}{4\epsilon_3}
-\frac{\|{\bf U}\|_{L^\infty(\Omega)}}{4\epsilon_4})\|{\bf B-A}\|_0^2
+{\Lambda}\|\nabla\cdot({\bf B-A})\|_0^2\nonumber\\
&+&(\lambda-\epsilon_3R_\alpha\|f\|_{L^\infty(\Omega)}
-\epsilon_4\|{\bf U}\|_{L^\infty(\Omega)})\|\nabla\times({\bf B-A})\|_0^2\nonumber\\
\label{equation:eq-33}
&\geq & C_6\|{\bf B-A}\|_{\mathcal{V}}^2,
\end{eqnarray}
where $C_6$ is taken as $\min\{(\frac{1}{\tau}-\frac{9R_\alpha\|f\|_{L^\infty(\Omega)}}{16\epsilon_3}
-\frac{\|{\bf U}\|_{L^\infty(\Omega)}}{4\epsilon_4}),  {\Lambda},(\lambda_0-\epsilon_3R_\alpha\|f\|_{L^\infty(\Omega)}
-\epsilon_4\|{\bf U}\|_{L^\infty(\Omega)})\}$.

\par
Since $\Psi(\cdot)$ is a monotone function, we have
\begin{eqnarray}
&&<\mathcal{L}v-\mathcal{L}v,v-w>\nonumber\\
&=&\frac{1}{\tau}\|v-v\|_0^2
+\kappa\|\nabla (v-w)\|_0^2
+\int_{\Gamma_2}(\Psi(v+\theta_0)-\Psi(w+\theta_0))(v-w)ds\nonumber\\
&\geq&\frac{1}{\tau}\|v-v\|_0^2
+\kappa\|\nabla (v-w)\|_0^2
+\frac{\zeta}{8}\|v-w\|^5_{L^2(\Gamma_2)}+\omega\|v-w\|^2_{L^2(\Gamma_2)}\nonumber\\
\label{equation:eq-34}
&\geq & C_7\|v-w\|_1^2
+\frac{\zeta}{8}\|v-w\|^5_{L^5(\Gamma_2)}+\omega\|v-w\|^2_{L^2(\Gamma_2)},
\end{eqnarray}
where $C_7$ is take as the $C_7=\min (\tau^{-1},\kappa)$.
\end{proof}

\begin{lemma}
\label{lemma:Lem-4}
The nonlinear operator $\mathcal{P}:\mathcal{V}\longrightarrow \mathcal{V}'$  and
$\mathcal{L}:\mathcal{Y}\longrightarrow \mathcal{Y}'$ is hemi-continuous, that is
$$\mathcal{S}(s)=<\mathcal{P}({\bf R}+s{\bf Q}), {\bf \Phi}>,\ \
\mathcal{Z}(s)=<\mathcal{L}(v+su), {w}>$$
is continuous on $s\in [0,1]$,respectively, for any ${
\bf Q,R,\Phi}\in \mathcal{V}, u,v,w\in \mathcal{Y}$.
\end{lemma}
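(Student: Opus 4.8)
The plan is to show that each of the scalar functions $\mathcal{S}(s)$ and $\mathcal{Z}(s)$ is continuous in $s\in[0,1]$ by direct inspection of the defining formulas (\ref{equation:eq-21}) and (\ref{equation:eq-22}). For fixed ${\bf Q},{\bf R},{\bf \Phi}\in\mathcal{V}$ and $u,v,w\in\mathcal{Y}$, I would substitute ${\bf A}={\bf R}+s{\bf Q}$ into the pairing $<\mathcal{P}{\bf A},{\bf \Phi}>$ and $\omega=v+su$ into $<\mathcal{L}\omega,w>$, then verify that each resulting term is a continuous function of the real variable $s$. Because the arguments are affine in $s$, every \emph{linear} term produces a polynomial (indeed affine) function of $s$, which is trivially continuous; the only work lies in the genuinely nonlinear pieces. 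For $\mathcal{S}$ these are the $\alpha$-quench term $R_\alpha\bigl(\tfrac{f({\bf x},t)({\bf R}+s{\bf Q})}{1+\gamma|{\bf R}+s{\bf Q}|^2},\nabla\times{\bf \Phi}\bigr)$, and for $\mathcal{Z}$ the boundary term $<\Psi(v+su+\theta_0)-\Psi(\theta_0),w>_{\Gamma_2}$.

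First I would handle $\mathcal{S}(s)$. The integrand of the nonlinear term is $\tfrac{f\,({\bf R}+s{\bf Q})}{1+\gamma|{\bf R}+s{\bf Q}|^2}\cdot(\nabla\times{\bf \Phi})$; for each fixed ${\bf x}$ this is a continuous (indeed smooth) function of $s$ since the denominator is bounded below by $1$, and by Lemma~\ref{lemma:Lem-31} the map ${\bf A}\mapsto \tfrac{{\bf A}}{1+\gamma|{\bf A}|^2}$ is globally Lipschitz with constant $\tfrac{9}{4}$. Continuity of the integral in $s$ then follows from dominated convergence: the integrand is dominated uniformly in $s\in[0,1]$ by $|f|\,(|{\bf R}|+|{\bf Q}|)\,|\nabla\times{\bf \Phi}|$, which is integrable because $f\in L^\infty$ and ${\bf R},{\bf Q}\in L^2$, $\nabla\times{\bf \Phi}\in L^2$. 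Hence $\mathcal{S}$ is continuous on $[0,1]$; one could alternatively get this directly from the Lipschitz estimate, since $|\mathcal{S}(s)-\mathcal{S}(s')|\le C\,|s-s'|\,\|{\bf Q}\|_{\mathcal{V}}\|{\bf \Phi}\|_{\mathcal{V}}$ using Lemma~\ref{lemma:Lem-31} together with Cauchy--Schwarz.

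Next I would treat $\mathcal{Z}(s)$. The delicate term is the $\Gamma_2$-integral of $\bigl(\Psi(v+su+\theta_0)-\Psi(\theta_0)\bigr)w$, where $\Psi(t)=\zeta|t|^3t+\omega t$ grows like $t^4$. For each fixed boundary point the integrand is continuous in $s$ because $\Psi$ is a continuous function of its argument and $s\mapsto v+su$ is affine. To pass the limit under the integral I would again invoke dominated convergence, using the elementary bound from (\ref{equation:eq-25}) that $|\Psi(v+su+\theta_0)-\Psi(\theta_0)|$ is controlled by $|v+su|$ times a fourth-degree polynomial in $|v+su|$ and $\theta_0$; for $s\in[0,1]$ this is dominated by a fixed $L^{5/4}(\Gamma_2)$ function built from $|v|+|u|$, which pairs integrably against $w\in L^5(\Gamma_2)\subset\mathcal{Y}$ by Hölder's inequality. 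This yields continuity of $\mathcal{Z}$ on $[0,1]$.

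The main obstacle is the quartic growth of $\Psi$ on the boundary: unlike the bounded $\alpha$-quench nonlinearity, $\Psi$ is only locally Lipschitz, so a global Lipschitz shortcut is unavailable and I must instead produce an $s$-uniform integrable majorant to justify interchanging limit and integral. The trace/integrability bookkeeping—ensuring the dominating function lies in the right Lebesgue space on $\Gamma_2$ so that its pairing with $w$ is finite—is where the $L^5(\Gamma_2)$ component of the norm $\|\cdot\|_{\mathcal{Y}}$ is essential, and this is the step I would write out most carefully. Everything else reduces to affine dependence on $s$ plus Cauchy--Schwarz, which is routine.
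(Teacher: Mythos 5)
Your proposal is correct and follows essentially the same route as the paper: split each pairing into the affine-in-$s$ linear terms plus the two genuine nonlinearities, control the $\alpha$-quench term via its global Lipschitz bound (Lemma~\ref{lemma:Lem-31} together with $\tfrac{1}{1+\gamma|\cdot|^2}\le 1$), and control the boundary term through the quartic-growth estimate on $\Psi$ paired against $w$ on $\Gamma_2$. The only cosmetic difference is in the last step for the $\Psi$ term: you pass to the limit by dominated convergence with an $s$-uniform $L^{5/4}(\Gamma_2)$ majorant, whereas the paper integrates the pointwise local-Lipschitz bound $|\Psi(v+su+\theta_0)-\Psi(v+s_0u+\theta_0)|\lesssim \zeta(|u|+|v|+\theta_0)^3|u|\,|s-s_0|$ directly against $w$ to get an explicit $|s-s_0|$ factor; both hinge on exactly the same H\"older bookkeeping in $L^5(\Gamma_2)$ that you identify as the essential point.
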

\begin{proof}
For convenience, we denote ${\bf Q}(s)={\bf R}+s{\bf Q}$.
For any $s,s_0\in [0,1]$, we have
\begin{eqnarray}
|\mathcal{S}(s)-\mathcal{S}(s_0)|
&=&<\mathcal{P}({\bf Q}(s))-\mathcal{P}({\bf Q}(s_0)), {\bf \Phi}>\nonumber\\
&=&\frac{1}{\tau}({\bf Q}(s)-{\bf Q}(s_0),{\bf \Phi})
+\lambda(\nabla\times({\bf Q}(s)-{\bf Q}(s_0)),\nabla\times{\bf \Phi})\nonumber\\
&+&\Lambda(\nabla\cdot({\bf Q}(s)-{\bf Q}(s_0)),\nabla\cdot{\bf \Phi})
-({\bf U}\times({\bf Q}(s)-{\bf Q}(s_0)),\nabla\times{\bf \Phi})\nonumber\\
&-&(R_\alpha\frac{f({\bf x},t)}{1+\gamma|{\bf Q}(s)|^2}{\bf Q}(s)
- R_\alpha\frac{f({\bf x},t)}{1+\gamma|{\bf Q}(s_0)|^2}{\bf Q}(s_0),\nabla\times{\bf \Phi})\nonumber\\
&\leq & \frac{1}{\tau}\|{\bf Q}\|_0\|{\bf \Phi}\|_0|s-s_0|
+\lambda_0\|\nabla\times{\bf Q}\|_0\|\nabla\times{\bf \Phi}\|_0|s-s_0 |\nonumber\\
&+&\Lambda\|\nabla\cdot{\bf Q}\|_0\|\nabla\cdot{\bf \Phi}\|_0|s-s_0 |
+\|{\bf U}\times{\bf Q}\|_0\|\nabla\times{\bf \Phi}\|_0|s-s_0 |\nonumber\\
&+&R_\alpha\|f({\bf x},t)\|_{L^\infty(\Omega)}\|{\bf Q}\|_0
\|\nabla\times{\bf \Phi}\|_0|s-s_0 |,
\end{eqnarray}
where we use $ \frac{1}{1+\gamma|{\bf Q}(s)|^2}\leq 1,\
\frac{1}{1+\gamma|{\bf Q}(s_0)|^2}\leq 1$. This shows that
$\mathcal{S}(s)$ is continuous on $ [0,1]$  for any ${\bf Q},{\bf R}\in \mathcal{Y}.$

\par
We also denote $u(t)=v+su, \forall u,v\in \mathcal{Y}, t\in [0,1]$.
Then for any $s,s_0\in [0,1]$, we have
\begin{eqnarray}
&&|\mathcal{Z}(s)-\mathcal{Z}(s_0)|
=<\mathcal{L}u(s)-\mathcal{L}u(s_0), {w}>\nonumber\\
&=&\frac{1}{\tau}(u(s)-u(s_0),w)+\kappa(\nabla(u(s)-u(s_0)),\nabla w)+\int_{\Gamma_2}
(\Psi(u(t))-\Psi(u(t_0)))wds\nonumber\\
&\leq&|s-s_0|(\frac{1}{\tau}\|u\|_0\|w\|_0+\kappa\|\nabla u\|_0\|\nabla w\|_0
+\int_{\Gamma_2}4\zeta((|u|+|v|)^4 +\omega|v|)wds,
\end{eqnarray}
which means $\mathcal{Z}(s)$ is continuous on $ [0,1]$ for any $u,v\in \mathcal{Y}$.

\end{proof}

\section{The Regularized Problem}
We have to notice the test function $\Upsilon\in L^\infty(\Omega)$
in (\ref{equation:eq-20}),which increases the difficulties deeply when analyzing
the well-posedness. In order deal with this problem, The Regularized techniques can be
employed: given the small parameter $0<\epsilon<1$, find
 ${\bf B}\in L^2(0,T;{\mathcal{V}}) $ and $\xi \in L^2(0,T;\mathcal{Y})$
 such that
\begin{eqnarray}
(\partial_t{ \bf B},{\Phi})&+&(\lambda(\xi+\theta_0)\nabla\times {\bf B},\nabla\times{\Phi})
+{\Lambda}(\nabla\cdot{\bf B},\nabla\cdot{\Phi})=R_\alpha(\frac{f({\bf x},t){\bf B}}{1+\gamma|{\bf B}|^2}, \nabla\times{\Phi})\nonumber\\
\label{equation:eq-37}
&+&({\bf U\times B},\nabla\times{\Phi}), \ \ \forall {\Phi\in \mathcal{V}},\\
(\partial_t\xi, \Upsilon)&+&(\kappa\nabla\xi, \nabla\Upsilon)
+<(\Psi(\xi+\theta_0)-\Psi(\theta_0)),\Upsilon>_{\Gamma_2}\nonumber\\
\label{equation:eq-38}
&=&
([q(\xi)\mathcal{K}({\bf B})]_\epsilon,\Upsilon)-(\kappa\nabla\theta_0,\nabla\Upsilon),\forall \Upsilon\in
 \mathcal{Y},
\end{eqnarray}
where $[\mathcal{D}]_\epsilon$ is the cut-off of $\mathcal{D}$ defined by
\begin{eqnarray*}
[\mathcal{D}]_\epsilon=\frac{\mathcal{D}}{1+\epsilon|\mathcal{D}|}, \ \ \epsilon>0.
\end{eqnarray*}
It is clear that
$[\mathcal{D}]_\epsilon\in L^\infty(\Omega)$. If $\mathcal{D}\in L^p(\Omega)$, the
$$\lim\limits_{\epsilon\longrightarrow 0} \|[\mathcal{D}]_\epsilon
- \mathcal{D} \|_{L^{p/2}(\Omega)}=0.$$

\subsection{Semi-discrete Approximation}
We will use Roth's method \cite{MR10} to explore the well-posedness of
solution of the regularized problem
(\ref{equation:eq-37})-(\ref{equation:eq-38}). Let $ N$ be a positive integer
 and let  an equidistant partition  of $[0,T]$ be given by
 $$t_n=n\tau, n=0,1,2,\cdots, N, \ \ \tau=T/N.$$
The semi-discrete approximation to (\ref{equation:eq-37})-(\ref{equation:eq-38})
can be formulated by : for
$\forall {\Phi\in \mathcal{V}}, \Upsilon\in
 \mathcal{Y},$ find $ {\bf B}^n\in \mathcal {V}$  and
$\xi^n\in \mathcal{Y}, 1\leq n \leq N$  with initial data ${\bf B}^0={\bf B}_0({\bf x}), \xi^0=0$  such that,
\begin{eqnarray}
(\frac{{\bf B}^n-{\bf B}^{n-1}}{\tau},{\Phi})&+&
(\lambda(\xi^{n-1}+\theta_0)\nabla\times {\bf B}^n,\nabla\times{\Phi})
+{\Lambda}(\nabla\cdot{\bf B}^n,\nabla\cdot{\Phi})
\nonumber\\
\label{equation:eq-39}
&=&R_\alpha(\frac{f({\bf x},n\tau){\bf B}^n}{1+\gamma|{\bf B}^{n-1}|^2}, \nabla\times{\Phi})
+({\bf U\times }{\bf B}^{n},\nabla\times{\Phi}), \\
(\frac{\xi^n-\xi^{n-1}}{\tau}, \Upsilon)&+&(\kappa\nabla\xi^n, \nabla\Upsilon)
+<(\Psi(\xi^n+\theta_0)-\Psi(\theta_0)),\Upsilon>_{\Gamma_2}\nonumber\\
\label{equation:eq-40}
&=&
(q(\xi^{n-1})[\mathcal{K}({\bf B}^{n})]_\epsilon,\Upsilon)-(\kappa\nabla\theta_0,\nabla\Upsilon).
\end{eqnarray}
For convenience, we also denote the difference operator
$$\delta_\tau w=\frac{w^n-w^{n-1}}{\tau},\  in\ \ [t_{n-1},t_n].$$
Obviously, (\ref{equation:eq-39})-(\ref{equation:eq-40}) can be solved
sequentially since (\ref{equation:eq-39}) is independent of (\ref{equation:eq-40})
 for a given ${\bf B}^{n-1}$ and (\ref{equation:eq-40}) can be solved after given by
 ${\bf B}^{n}$ in (\ref{equation:eq-39}) and $\xi^{n-1}$.

\subsection{Well-posedness of  the Nonlinear Magnetic Equation}

Let $\tilde{\bf B}_\tau$ and ${\bf B}_\tau$ denote the piecewise constant
and piecewise linear interpolations using the discrete solutions, that is
\begin{eqnarray}
\label{equation:eq-41}
\tilde{\bf B}_\tau(\cdot,t)={\bf B}^n, {\bf B}_\tau(\cdot, t)
=L_n(t){\bf B}^n+(1-L_n(t)){\bf B}^{n-1},
\end{eqnarray}
for any $t\in [t_{n-1},t_n]$ and $1\leq n\leq N$ with $L_n(t)=(t-t_{n-1})/\tau$.
Obviously, we have
$$ \tilde{\bf B}_\tau\in L^2(0,T;\mathcal{V}),
{\bf B}_\tau\in C(0,T;\mathcal{V}).$$
We also denote $$\hat{\bf B}_\tau=\tilde{\bf B}_\tau(:,t-\tau),
 \ \ \forall t\in(t_{n-1},t_n].$$

Let $\tilde{\xi}_\tau$ and ${\xi}_\tau$ denote the piecewise constant
and piecewise linear interpolations using the discrete solutions, that is
\begin{eqnarray}
\label{equation:eq-41}
\tilde{\xi}_\tau(\cdot,t)={\xi}^n, {\xi}_\tau(\cdot, t)
=L_n(t){\xi}^n+(1-L_n(t)){\xi}^{n-1},
\end{eqnarray}
for any $t\in [t_{n-1},t_n]$ and $1\leq n\leq N$.
We also denote $$\hat{\xi}_\tau=\tilde{\xi}_\tau(:,t-\tau)=\xi^{n-1},
 \ \ \forall t\in(t_{n-1},t_n].$$

\begin{theorem}
\label{theorem:thm-5}
For any $1\leq n\leq N$ and for given ${\bf B}^{n-1}$, the weak formula
(\ref{equation:eq-39}) has a unique solution ${\bf B}^{n}\in \mathcal{V}$.
For a given $ {\bf B}^{n}\in\mathcal{V} $ and $ \xi^{n-1}\in \mathcal{Y}$,
the weak formula (\ref{equation:eq-40}) has a unique solution
$\xi^n\in \mathcal{Y}$.
\end{theorem}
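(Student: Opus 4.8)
The plan is to prove the two assertions separately, exploiting the fact that the Rothe scheme (\ref{equation:eq-39})--(\ref{equation:eq-40}) is semi-implicit: in (\ref{equation:eq-39}) both the $\alpha$-quench denominator $1+\gamma|{\bf B}^{n-1}|^2$ and the diffusivity $\lambda(\xi^{n-1}+\theta_0)$ are frozen at the previous time level, so the magnetic equation is in fact \emph{linear} in the unknown ${\bf B}^n$; by contrast the radiative boundary term $\Psi(\xi^n+\theta_0)$ keeps (\ref{equation:eq-40}) genuinely nonlinear in $\xi^n$. Accordingly I would establish existence and uniqueness for (\ref{equation:eq-39}) by the Lax--Milgram theorem, and for (\ref{equation:eq-40}) by the Browder--Minty theorem on monotone operators, invoking Lemmas~\ref{lemma:Lem-1}--\ref{lemma:Lem-4} as precisely the hypotheses these theorems require.

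For (\ref{equation:eq-39}), fix $n$ together with the data ${\bf B}^{n-1}$ and $\xi^{n-1}$, and define the bilinear form $a(\cdot,\cdot)$ on $\mathcal{V}\times\mathcal{V}$ as the left-hand side of (\ref{equation:eq-39}) and the functional $F({\bf \Phi})=\frac{1}{\tau}({\bf B}^{n-1},{\bf \Phi})$, so that the equation reads $a({\bf B}^n,{\bf \Phi})=F({\bf \Phi})$ for all ${\bf \Phi}\in\mathcal{V}$. Boundedness of $a$ follows exactly as in the estimate of Lemma~\ref{lemma:Lem-1}, since $\frac{f({\bf x},n\tau)}{1+\gamma|{\bf B}^{n-1}|^2}\le\|f\|_{L^\infty}$ and $\lambda\le\lambda_M$; note the frozen denominator does not affect the bound. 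Coercivity follows as in Lemma~\ref{lemma:Lem-2}: using $\lambda\ge\lambda_0$ and absorbing the advection and quench terms by Young's inequality, one obtains $a({\bf w},{\bf w})\ge C_4\|{\bf w}\|_{\mathcal{V}}^2$ once $\epsilon_1,\epsilon_2$ and $\tau$ are chosen so that $C_4>0$. Since $F$ is clearly bounded on $\mathcal{V}$, Lax--Milgram yields a unique ${\bf B}^n\in\mathcal{V}$, and coercivity alone already forces uniqueness.

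For (\ref{equation:eq-40}), with ${\bf B}^n$ and $\xi^{n-1}$ now fixed, I would move the $\xi^{n-1}$ term to the right-hand side and recast the equation as the operator equation $\mathcal{L}\xi^n=\mathcal{G}$ in $\mathcal{Y}'$, where $\mathcal{L}$ is the operator (\ref{equation:eq-22}) and $\langle\mathcal{G},\Upsilon\rangle=\frac{1}{\tau}(\xi^{n-1},\Upsilon)+(q(\xi^{n-1})[\mathcal{K}({\bf B}^n)]_\epsilon,\Upsilon)-(\kappa\nabla\theta_0,\nabla\Upsilon)$. The first thing to check is that $\mathcal{G}\in\mathcal{Y}'$, and this is exactly where the regularization matters: $\mathcal{K}({\bf B}^n)$ contains $|\nabla\times{\bf B}^n|^2$ and is merely integrable, whereas the cut-off gives $[\mathcal{K}({\bf B}^n)]_\epsilon\in L^\infty(\Omega)$ with $\|[\mathcal{K}({\bf B}^n)]_\epsilon\|_{L^\infty}\le 1/\epsilon$, so (using boundedness of the conductivity $q=\sigma$) the product lies in $L^2(\Omega)$ and $\mathcal{G}$ is a bounded linear functional on $\mathcal{Y}$. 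Since $\mathcal{Y}$, equipped with $\|v\|_{\mathcal{Y}}=\|v\|_1+\|v\|_{L^5(\Gamma_2)}$, is a reflexive Banach space, and since Lemmas~\ref{lemma:Lem-1}, \ref{lemma:Lem-2}, \ref{lemma:Lem-3} and \ref{lemma:Lem-4} assert that $\mathcal{L}$ is bounded, coercive, strictly monotone and hemicontinuous, the Browder--Minty theorem makes $\mathcal{L}$ a bijection of $\mathcal{Y}$ onto $\mathcal{Y}'$; the strict monotonicity (\ref{equation:eq-32}) then forces the preimage of $\mathcal{G}$ to be unique. This produces the unique $\xi^n\in\mathcal{Y}$ solving (\ref{equation:eq-40}).

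I expect the main obstacle to be the verification that $\mathcal{G}\in\mathcal{Y}'$ together with the bookkeeping needed to confirm, simultaneously, all four monotone-operator hypotheses for $\mathcal{L}$ over the non-Hilbertian space $\mathcal{Y}=H_0^1(\Omega)\cap L^5(\Gamma_2)$, where the nonlinear radiative term is controlled through the trace embedding into $L^5(\Gamma_2)$. A secondary point deserving care is the coercivity constant for (\ref{equation:eq-39}): the lower-order advection and $\alpha$-quench contributions must be absorbed into the curl and $L^2$ terms, which imposes the smallness requirement on $\tau$ (equivalently $C_4>0$) already isolated in Lemma~\ref{lemma:Lem-2}, and one should record that this constraint is uniform in $n$ so that the step-by-step solvability propagates along the whole partition.
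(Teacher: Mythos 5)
Your proposal is correct, and for the heat equation it coincides with the paper's argument: the paper also recasts (\ref{equation:eq-40}) as $\mathcal{L}\xi^n=H_{n-1}$ and invokes the bounded--coercive--strictly monotone--hemicontinuous package of Lemmas \ref{lemma:Lem-1}--\ref{lemma:Lem-4} together with the Browder--Minty theory of \cite{MR11,MR12}, with uniqueness from (\ref{equation:eq-32}); your explicit verification that $\mathcal{G}\in\mathcal{Y}'$ via $\|[\mathcal{K}({\bf B}^n)]_\epsilon\|_{L^\infty}\le 1/\epsilon$ and the boundedness of $q$ is a point the paper uses but does not spell out, and it is worth recording. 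Where you genuinely diverge is the magnetic equation: the paper treats (\ref{equation:eq-39}) as the nonlinear operator equation $\mathcal{P}{\bf B}^n=F_{n-1}$ with $\mathcal{P}$ as in (\ref{equation:eq-21}) and again appeals to monotone-operator theory, whereas you observe that the scheme freezes both $\lambda(\xi^{n-1}+\theta_0)$ and the quench denominator $1+\gamma|{\bf B}^{n-1}|^2$ at the previous level, so the problem is linear in ${\bf B}^n$ and Lax--Milgram suffices on the Hilbert space $\mathcal{V}$. Your reading is actually the more faithful one: the operator $\mathcal{P}$ of (\ref{equation:eq-21}) carries the denominator $1+\gamma|{\bf A}|^2$ in the \emph{unknown}, so the paper's identification of (\ref{equation:eq-42}) with $\mathcal{P}{\bf B}^n=F_{n-1}$ is not literally exact for the semi-implicit scheme as written; the monotone-operator machinery (and Lemma \ref{lemma:Lem-31}) would be needed only for a fully implicit quench term. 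What the paper's route buys is uniformity of treatment and robustness if one implicitizes the nonlinearity; what yours buys is a more elementary existence proof and a cleaner match with the discretization actually used. Both arguments share the same smallness constraint on $\tau$ for coercivity, and you are right to note it is uniform in $n$.
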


\begin{proof}
We rewrite the weak formula (\ref{equation:eq-39}) as:
find ${\bf B}^{n}\in \mathcal{V}$ such that
\begin{eqnarray}
&&(\frac{{\bf B}^n}{\tau},{\Phi})+
(\lambda(\xi^{n-1}+\theta_0)\nabla\times {\bf B}^n,\nabla\times{\Phi})
+{\Lambda}(\nabla\cdot{\bf B}^n,\nabla\cdot{\Phi})\nonumber\\
&-&
R_\alpha(\frac{f({\bf x},n\tau){\bf B}^n}{1+\gamma|{\bf B}^{n-1}|^2}, \nabla\times{\Phi})
-({\bf U\times }{\bf B}^{n},\nabla\times{\Phi})
\nonumber\\
\label{equation:eq-42}
&=&(\frac{{\bf B}^{n-1}}{\tau},{\Phi}),\ \ \forall \Phi\in \mathcal{V},
\end{eqnarray}
which is equivalent to an nonlinear operator equation
\begin{eqnarray}
\label{equation:eq-43}
\mathcal{P}{\bf B}^n=F_{n-1},
\end{eqnarray}
where $F_{n-1}\in \mathcal{V}'$ defined by
$ (F_{n-1},\Phi)=(\frac{{\bf B}^{n-1}}{\tau},{\Phi}). $

We also rewrite (\ref{equation:eq-40}) as:
find $\xi^{n}\in \mathcal{Y}$ such that
\begin{eqnarray}
&&(\frac{\xi^n}{\tau}, \Upsilon)+(\kappa\nabla\xi^n, \nabla\Upsilon)
+<(\Psi(\xi^n+\theta_0)-\Psi(\theta_0)),\Upsilon>_{\Gamma_2}\nonumber\\
\label{equation:eq-44}
&&=(\frac{\xi^{n-1}}{\tau}, \Upsilon)+
(q(\xi^{n-1})[\mathcal{K}({\bf B}^{n})]_\epsilon,\Upsilon)-(\kappa\nabla\theta_0,\Upsilon),
 \forall \Upsilon\in \mathcal{Y},
\end{eqnarray}
which is equivalent to an nonlinear operator equation
\begin{eqnarray}
\label{equation:eq-45}
\mathcal{L}{\xi}^n=H_{n-1},
\end{eqnarray}
where $H_{n-1}\in \mathcal{Y}'$ defined by
$$ (H_{n-1},\Upsilon)=(\frac{\xi^{n-1}}{\tau}, \Upsilon)+
(q(\xi^{n-1})[\mathcal{K}({\bf B}^{n})]_\epsilon,\Upsilon)-(\kappa\nabla\theta_0,\Upsilon),
 \forall \Upsilon\in \mathcal{Y}. $$

From Lemma \ref{lemma:Lem-1}-Lemma \ref{lemma:Lem-4}, we know that
$\mathcal{P}$ and $\mathcal{L}$  are a bounded, coercive, strictly
 monotone,and semi-continuous
operator on $\mathcal{V}  $ and $\mathcal{Y}$, respectively.
From \cite{MR11,MR12}, we know that problem (\ref{equation:eq-42})
has a solution ${\bf B}^{n}\in \mathcal{V}$, and (\ref{equation:eq-44}).
has a solution $\xi^n\in \mathcal{Y}$.

\par
Now we have to prove the uniqueness of the solution.
 Let ${\bf B}^{n},\check{\bf B}^{n}$ be the two solutions
 of (\ref{equation:eq-42}).
 From Lemma \ref{lemma:Lem-3}, we have
 $$0=<\mathcal{P}{\bf B}^{n}-\mathcal{P}\check{\bf B}^{n},
 {\bf B}^{n}-\check{\bf B}^{n}>
 \geq C_6\|{\bf B}^{n}-\check{\bf B}^{n}\|_{\mathcal{V}}^2.$$
 We can conclude  $ {\bf B}^{n}=\check{\bf B}^{n}$ in $\Omega$,
 which means the uniqueness of the solution of (\ref{equation:eq-42}).
Let $\xi^n,\check{\xi}^n$ be the two solutions of (\ref{equation:eq-44}).
 From Lemma \ref{lemma:Lem-3}, we also have
$$ 0=<\mathcal{L}\xi^n-\mathcal{L}\check{\xi}^n,\xi^n-\check{\xi}^n>
\geq C_7\|\xi^n-\check{\xi}^n\|_1^2
+\frac{\zeta}{8}\|\xi^n-\check{\xi}^n\|_{L^5(\Gamma_2)}^2,$$
 which means the uniqueness of the solution of (\ref{equation:eq-44}).
\end{proof}

\begin{lemma}
\label{lemma:Lem-6}
There exists two positive constants $C_8$ and $C$  dependent
of $ R_\alpha$, $ \|f({\bf x},t)\|_{L^\infty(0,T;L^\infty(\Omega))}$,
$\|{\bf U}\|_{L^\infty(0,T;L^\infty(\Omega))}$ such that
\begin{eqnarray}
\label{equation:eq-47}
&&\|{\bf B}^n\|_0^2
+\sum_{i=1}^n\tau\lambda_0\| \nabla\times {\bf B}^i\|_0^2
+\sum_{i=1}^n\tau\Lambda\| \nabla\cdot {\bf B}^i\|_0^2
\leq C_8\|{\bf B}^0\|_0^2.\\
\label{equation:eq-48}
&& \|{\bf B}_\tau\|_{L^\infty(0,T;L^2(\Omega))}
+\sqrt{\lambda_0}\| \nabla\times {\bf B}_\tau\|_{L^2(0,T;L^2(\Omega))}
+\Lambda\| \nabla\cdot {\bf B}_\tau\|_{L^2(0,T;L^2(\Omega))}
\leq C,\\
\label{equation:eq-49}
&&\|\tilde{\bf B}_\tau\|_{L^\infty(0,T;L^2(\Omega))}
+\sqrt{\lambda_0}\| \nabla\times \tilde{\bf B}_\tau\|_{L^2(0,T;L^2(\Omega))}
+\Lambda\| \nabla\cdot \tilde{\bf B}_\tau\|_{L^2(0,T;L^2(\Omega))}
\leq C.
\end{eqnarray}

\end{lemma}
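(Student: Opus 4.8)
The plan is to run a standard discrete energy argument on the magnetic scheme (\ref{equation:eq-39}): test against the current iterate, telescope in $n$, and close with a discrete Gronwall inequality; the interpolation bounds (\ref{equation:eq-48})--(\ref{equation:eq-49}) then drop out of (\ref{equation:eq-47}) by convexity.

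First I would take $\Phi={\bf B}^n$ in (\ref{equation:eq-39}) and treat the time-difference term with the identity $({\bf B}^n-{\bf B}^{n-1},{\bf B}^n)=\frac{1}{2}(\|{\bf B}^n\|_0^2-\|{\bf B}^{n-1}\|_0^2+\|{\bf B}^n-{\bf B}^{n-1}\|_0^2)$, dropping the nonnegative last term. Using $\lambda(\xi^{n-1}+\theta_0)\ge\lambda_0$, the diffusion part is bounded below by $\lambda_0\|\nabla\times{\bf B}^n\|_0^2+\Lambda\|\nabla\cdot{\bf B}^n\|_0^2$. The two right-hand terms of (\ref{equation:eq-39}) are estimated exactly as in the proof of Lemma~\ref{lemma:Lem-2} --- using $f/(1+\gamma|{\bf B}^{n-1}|^2)\le 1$ --- by $R_\alpha\|f\|_{L^\infty}\|{\bf B}^n\|_0\|\nabla\times{\bf B}^n\|_0$ and $\|{\bf U}\|_{L^\infty}\|{\bf B}^n\|_0\|\nabla\times{\bf B}^n\|_0$; applying Young's inequality with parameters $\epsilon_1,\epsilon_2$ chosen so that $\epsilon_1R_\alpha\|f\|_{L^\infty}+\epsilon_2\|{\bf U}\|_{L^\infty}\le\lambda_0/2$ absorbs the curl factors into the left side and leaves a term $C_9\|{\bf B}^n\|_0^2$. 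Multiplying by $2\tau$ gives, for each $n$, $$\|{\bf B}^n\|_0^2-\|{\bf B}^{n-1}\|_0^2+\tau\lambda_0\|\nabla\times{\bf B}^n\|_0^2+2\tau\Lambda\|\nabla\cdot{\bf B}^n\|_0^2\le 2C_9\tau\|{\bf B}^n\|_0^2.$$

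Next I would sum over $i=1,\dots,n$; the time terms telescope, yielding $$\|{\bf B}^n\|_0^2+\sum_{i=1}^n\tau\lambda_0\|\nabla\times{\bf B}^i\|_0^2+\sum_{i=1}^n2\tau\Lambda\|\nabla\cdot{\bf B}^i\|_0^2\le\|{\bf B}^0\|_0^2+2C_9\tau\sum_{i=1}^n\|{\bf B}^i\|_0^2.$$ Here lies the one genuine obstacle: because the Lorentz and convection terms are taken at the current level ${\bf B}^n$, the residual sum on the right runs up to $i=n$, so the bound is implicit and cannot be closed termwise. I would resolve this by restricting to $\tau$ small enough that $2C_9\tau<1$ (legitimate, since $\tau=T/N\to0$ in the limit we are after), absorbing $2C_9\tau\|{\bf B}^n\|_0^2$ into the left-hand side, and then applying the discrete Gronwall lemma to get $\|{\bf B}^n\|_0^2\le C_8\|{\bf B}^0\|_0^2$ with $C_8\sim e^{CT}$ independent of $\tau$ (using $n\tau\le T$). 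Feeding this bound back into the summed inequality controls both sums, and since $2\tau\Lambda\ge\tau\Lambda$ the estimate (\ref{equation:eq-47}) follows.

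Finally, (\ref{equation:eq-48}) and (\ref{equation:eq-49}) are read off from (\ref{equation:eq-47}) through the definitions (\ref{equation:eq-41}). For the piecewise-constant interpolant one has $\|\tilde{\bf B}_\tau\|_{L^\infty(0,T;L^2)}=\max_n\|{\bf B}^n\|_0$ and $\|\nabla\times\tilde{\bf B}_\tau\|_{L^2(0,T;L^2)}^2=\sum_{i=1}^N\tau\|\nabla\times{\bf B}^i\|_0^2$ (and likewise for the divergence), all controlled by (\ref{equation:eq-47}). For the piecewise-linear interpolant ${\bf B}_\tau$, on each subinterval it is a convex combination of ${\bf B}^{n-1}$ and ${\bf B}^n$, so convexity of $\|\cdot\|_0$ gives the $L^\infty(0,T;L^2)$ bound, while convexity of $\|\cdot\|_0^2$ together with $\int_{t_{n-1}}^{t_n}L_n\,dt=\int_{t_{n-1}}^{t_n}(1-L_n)\,dt=\tau/2$ bounds the curl and divergence $L^2(0,T;L^2)$ norms by the same discrete sums (up to the harmless initial term). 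This gives (\ref{equation:eq-48})--(\ref{equation:eq-49}) and completes the argument.
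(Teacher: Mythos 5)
Your proposal is correct and follows essentially the same route as the paper: test (\ref{equation:eq-39}) with $\Phi={\bf B}^n$, use the identity $2({\bf B}^n-{\bf B}^{n-1},{\bf B}^n)\geq\|{\bf B}^n\|_0^2-\|{\bf B}^{n-1}\|_0^2$, bound the $\alpha$-quench and convection terms via Cauchy--Schwarz and Young, sum, absorb the current-level term for small $\tau$, and close with the discrete Gronwall inequality before passing to the interpolants. Your treatment of the implicit $i=n$ term (requiring $2C_9\tau<1$) and of the piecewise-linear interpolant via convexity is, if anything, slightly more careful than the paper's.
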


\begin{proof}
Taking $\Phi={\bf B}^n$ in (\ref{equation:eq-39}), we have
\begin{eqnarray}
&&({\bf B}^n-{\bf B}^{n-1},{\bf B}^n)
+\tau\lambda_0(\nabla\times {\bf B}^n,\nabla\times {\bf B}^n)
+\tau\Lambda(\nabla\cdot {\bf B}^n,\nabla\cdot {\bf B}^n)\nonumber\\
\label{equation:eq-50}
&\leq&\tau R_\alpha(\frac{f({\bf x},t)}{1+\gamma|{\bf B}^{n-1}|^2}{\bf B}^n, \nabla\times {\bf B}^n   )
+\tau({\bf U}\times {\bf B}^n, \nabla\times {\bf B}^n).
\end{eqnarray}
Since
$$2({\bf B}^n-{\bf B}^{n-1},{\bf B}^n)
\geq \|{\bf B}^n\|_0^2-\|{\bf B}^{n-1}\|_0^2,$$
summing up (\ref{equation:eq-50}) from $i=1,2,\cdots,n$, we have
\begin{eqnarray}
&&\frac{1}{2}(\|{\bf B}^n\|_0^2-\|{\bf B}^0\|_0^2)
+\sum_{i=1}^n\tau\lambda_0\| \nabla\times {\bf B}^i\|_0^2
+\sum_{i=1}^n\tau\Lambda\| \nabla\cdot {\bf B}^i\|_0^2\nonumber\\
&\leq&\sum_{i=1}^n(\tau R_\alpha\|f({\bf x},t)\|_{L^\infty(\Omega)}\|{\bf B}^i\|_0
\| \nabla\times {\bf B}^i\|_0)\nonumber\\
&+&\sum_{i=1}^n(\tau \|{\bf U}\|_{L^\infty(\Omega)}\|{\bf B}^i\|_0
\| \nabla\times {\bf B}^i\|_0)\nonumber\\
&\leq&\sum_{i=1}^n\tau R_\alpha\|f({\bf x},t)\|_{L^\infty(\Omega)}
(\frac{\|{\bf B}^i\|_0^2}{4\delta_i}+\delta_i
\| \nabla\times {\bf B}^i\|_0^2)\nonumber\\
\label{equation:eq-51}
&+&\sum_{i=1}^n\tau\|{\bf U}\|_{L^\infty(\Omega)}
(\frac{\|{\bf B}^i\|_0^2}{4\bar{\delta}_i}+\bar{\delta}_i
\| \nabla\times {\bf B}^i\|_0^2).
\end{eqnarray}
Taking $ \delta_i, \bar{\delta}_i$ such that
$$ \frac{1}{2\tau}-\frac{R_\alpha\|f({\bf x},t)\|_{L^\infty(\Omega)}}{4\delta_n}
-\frac{\|{\bf U}\|_{L^\infty(\Omega)}}{4\bar{\delta}_n}>0,$$
 $$\lambda_0-R_\alpha\|f({\bf x},t)\|_{L^\infty(\Omega)}\delta_i
-\|{\bf U}\|_{L^\infty(\Omega)}\bar{\delta}_i>0,$$
based on Growall's inequality, we have
\begin{eqnarray}
\label{equation:eq-52}
\|{\bf B}^n\|_0^2
+\sum_{i=1}^n\tau\lambda_0\| \nabla\times {\bf B}^i\|_0^2
+\sum_{i=1}^n\tau\Lambda\| \nabla\cdot {\bf B}^i\|_0^2
\leq C_8\|{\bf B}^0\|_0^2,
\end{eqnarray}
where $C_8$ is independent of $ n$, which means
$$\|{\bf B}_\tau\|_{L^\infty(0,T;L^2(\Omega))}
+\sqrt{\lambda_0}\| \nabla\times {\bf B}_\tau\|_{L^2(0,T;L^2(\Omega))}
+\Lambda\| \nabla\cdot {\bf B}_\tau\|_{L^2(0,T;L^2(\Omega))}
\leq C.$$
Similarly, (\ref{equation:eq-49})  comes directly from (\ref{equation:eq-48}).
\end{proof}


\begin{lemma}
\label{lemma:Lem-7}
There exists two  positive constants and $C_9$ and  $C$  dependent
of $ \epsilon, \kappa, q_{\max},\zeta,\|\xi^0\|_{0},\|\nabla\theta_0\|_0$ such that
\begin{eqnarray}
\label{equation:eq-53}
&&\|{\xi}^n\|_0^2+\sum_{i=1}^n\tau\|\kappa \nabla \xi^i\|_0^2
+\sum_{i=1}^n\tau\omega\|\xi^i\|_{L^2(\Gamma_2)}^2
\leq C_9\|{\xi}^{0}\|_0^2+\|\nabla\theta_0\|_0.\\
\label{equation:eq-54}
&&\|\xi_\tau\|_{L^\infty(0,T;L^2(\Omega))}
+\|\nabla\xi_\tau\|_{L^2(0,T;L^2(\Omega))}
+\frac{\zeta}{8}\|\xi_\tau\|_{L^2(0,T;L^5(\Gamma_2))}\leq C,\\
\label{equation:eq-55}
&&\|\tilde{\xi}_\tau\|_{L^\infty(0,T;L^2(\Omega))}
+\|\nabla\tilde{\xi}_\tau\|_{L^2(0,T;L^2(\Omega))}
+\frac{\zeta}{8}\|\tilde{\xi}_\tau\|_{L^2(0,T;L^5(\Gamma_2))}\leq C.
\end{eqnarray}

\end{lemma}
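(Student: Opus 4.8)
The plan is to mirror the argument of Lemma~\ref{lemma:Lem-6}, now testing the discrete temperature equation~(\ref{equation:eq-40}) against $\Upsilon=\xi^n$. The discrete time derivative is handled by the elementary identity $2(\xi^n-\xi^{n-1},\xi^n)\geq\|\xi^n\|_0^2-\|\xi^{n-1}\|_0^2$, the diffusion term produces $\tau\kappa\|\nabla\xi^n\|_0^2$, and the boundary nonlinearity is controlled by the monotonicity of $\Psi$ established in the proof of Lemma~\ref{lemma:Lem-2}, namely
$$\langle\Psi(\xi^n+\theta_0)-\Psi(\theta_0),\xi^n\rangle_{\Gamma_2}\geq\frac{\zeta}{8}\|\xi^n\|_{L^5(\Gamma_2)}^5+\omega\|\xi^n\|_{L^2(\Gamma_2)}^2.$$
After multiplying by $\tau$ and summing over $i=1,\dots,n$, these three contributions reproduce exactly the left-hand side of~(\ref{equation:eq-53}).

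The decisive step is bounding the right-hand side, and this is precisely where the regularization is used. Since the cut-off satisfies $\big|[\mathcal{K}(\mathbf{B}^n)]_\epsilon\big|=\big|\mathcal{K}(\mathbf{B}^n)/(1+\epsilon|\mathcal{K}(\mathbf{B}^n)|)\big|\leq 1/\epsilon$ pointwise, and $q(\xi^{n-1})=\sigma(\xi^{n-1}+\theta_0)\leq q_{\max}$, the Joule source obeys
$$\big(q(\xi^{n-1})[\mathcal{K}(\mathbf{B}^n)]_\epsilon,\xi^n\big)\leq\frac{q_{\max}|\Omega|^{1/2}}{\epsilon}\|\xi^n\|_0\leq\delta\|\xi^n\|_0^2+\frac{q_{\max}^2|\Omega|}{4\delta\epsilon^2},$$
while the inhomogeneous diffusion term is split by Young's inequality as $(\kappa\nabla\theta_0,\nabla\xi^n)\leq\frac{\kappa}{2}\|\nabla\xi^n\|_0^2+\frac{\kappa}{2}\|\nabla\theta_0\|_0^2$, the first piece being absorbed into the diffusion term after choosing the coercivity constant appropriately. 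Summing in $i$, the constant Joule contribution accumulates to at most $Tq_{\max}^2|\Omega|/(4\delta\epsilon^2)$, which is finite for each fixed $\epsilon$; this is exactly why the constants $C_9,C$ are allowed to depend on $\epsilon$ and $q_{\max}$. A discrete Gronwall inequality applied to the residual $\delta\sum_i\tau\|\xi^i\|_0^2$ term then yields~(\ref{equation:eq-53}).

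Finally, the interpolation bounds~(\ref{equation:eq-54})--(\ref{equation:eq-55}) follow from~(\ref{equation:eq-53}) verbatim as~(\ref{equation:eq-48})--(\ref{equation:eq-49}) followed from~(\ref{equation:eq-47}): the piecewise-constant interpolant $\tilde{\xi}_\tau$ coincides with $\xi^n$ on $(t_{n-1},t_n]$, and the piecewise-linear interpolant $\xi_\tau$ is a convex combination of $\xi^{n-1}$ and $\xi^n$, so the $L^\infty(0,T;L^2)$, $L^2(0,T;H^1)$ and $L^2(0,T;L^5(\Gamma_2))$ norms of both are dominated by the left side of~(\ref{equation:eq-53}).

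I expect the main obstacle to be organizational rather than analytical: keeping the $\epsilon$-dependence explicit and honest. The estimate is genuinely non-uniform in $\epsilon$, because the only available pointwise control of the Joule heating $\mathcal{K}(\mathbf{B})$, which is merely $L^1$ in space, comes through the factor $1/\epsilon$ supplied by the cut-off; every such constant must be tracked carefully, since it must later be confronted when passing $\epsilon\to0$ in the regularized problem. The boundary term, by contrast, is harmless here thanks to the monotonicity of $\Psi$ already in hand, so the only substantive work is the treatment of the regularized Joule source.
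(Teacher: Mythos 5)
Your proposal is correct and follows essentially the same route as the paper: test (\ref{equation:eq-40}) with $\Upsilon=\xi^n$, use the telescoping identity for the discrete time derivative, the monotonicity of $\Psi$ for the boundary term, the pointwise bound $|[\mathcal{K}({\bf B}^n)]_\epsilon|\leq 1/\epsilon$ together with Young's inequality for the Joule source and the $\nabla\theta_0$ term, then sum and apply the discrete Gronwall inequality. Your bookkeeping of the $q_{\max}$ and $|\Omega|^{1/2}$ factors is in fact slightly more careful than the paper's inequality (\ref{equation:eq-58}), which drops them.
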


\begin{proof}
\par
Taking $\Upsilon=\xi^n$ in (\ref{equation:eq-40}), we have

\begin{eqnarray}
({\xi^n-\xi^{n-1}}, \xi^n)&+&{\tau}(\kappa\nabla\xi^n, \nabla\xi^n)
+{\tau}<(\Psi(\xi^n+\theta_0)-\Psi(\theta_0)),\xi^n>_{\Gamma_2}\nonumber\\
\label{equation:eq-56}
&=&
{\tau}(q(\xi^{n-1})[\mathcal{K}({\bf B}^{n})]_\epsilon,\xi^n)-\tau(\kappa\nabla\theta_0,\nabla\Upsilon).
\end{eqnarray}
Firstly, we estimate the right hand of (\ref{equation:eq-56}). We should
notice
\begin{eqnarray}
\label{equation:eq-57}
|[\mathcal{K}({\bf B}^{n})]_\epsilon|=\frac{|\mathcal{K}({\bf B}^{n})|}{1+\epsilon|\mathcal{K}({\bf B}^{n})|}
\leq \frac{1}{\epsilon},
\end{eqnarray}
which leads to
\begin{eqnarray}
\label{equation:eq-58}
([q(\xi^{n-1})\mathcal{K}({\bf B}^{n})]_\epsilon,\xi^n)
\leq \frac{1}{\epsilon}\|\xi^n\|_0.
\end{eqnarray}
And we also have
$$ \tau|(\kappa\nabla\theta_0,\nabla\xi^n)|\leq \kappa\tau\|\nabla\theta_0\|_0\|\nabla\xi^n\|_0. $$

Since
$$2({\xi}^n-{\xi}^{n-1},{\xi}^n)
\geq \|{\xi}^n\|_0^2-\|{\xi}^{n-1}\|_0^2,$$
and
$$(\Psi(\xi^n+\theta_0)-\Psi(\theta_0),\xi^n)
\geq \frac{\zeta}{8}\|\xi^n\|^5_{L^5(\Gamma_2)}
+\omega \|\xi^n\|^2_{L^2(\Gamma_2)}\geq \frac{\zeta}{8} \|\xi^n\|^5_{L^5(\Gamma_2)},$$
summing up (\ref{equation:eq-56}) from $i=1,2,\cdots,n$, we have
\begin{eqnarray}
&&\frac{1}{2}(\|{\xi}^n\|_0^2-\|{\xi}^{0}\|_0^2)
+\sum_{i=1}^n\tau\|\kappa \nabla \xi^i\|_0^2
+\sum_{i=1}^n \frac{\zeta\tau}{8}\|\xi^i\|_{L^5(\Gamma_2)}^5\nonumber\\
\label{equation:eq-59}
&\leq & \sum_{i=1}^n(\frac{\tau }{\epsilon}\|\xi^i\|_0+\kappa\tau\|\nabla\theta_0\|_0\|\nabla\xi^i\|_0)\nonumber\\
&\leq &\sum_{i=1}^n (\frac{\tau^2}{4\epsilon^2\hat{\delta}_i}
+\hat{\delta}_i\|\xi^i\|_0^2+\frac{\kappa\tau}{4\varsigma}\|\nabla\theta_0\|_0^2
+\kappa\tau\varsigma\|\nabla\xi^i\|_0^2)
\end{eqnarray}
Taking $\hat{\delta}_n$ and $ \varsigma$ such that $\hat{\delta}_n+\kappa\tau\varsigma\leq \frac{1}{2}$,
and using Growall's inequality, we have
\begin{eqnarray}
\label{equation:eq-60}
\|{\xi}^n\|_0^2+\sum_{i=1}^n\tau\|\kappa \nabla \xi^i\|_0^2
+\sum_{i=1}^n\frac{\zeta\tau}{8}\|\xi^i\|_{L^5(\Gamma_2)}^5
\leq C_9(\|{\xi}^{0}\|_0^2+\|\nabla\theta_0\|_0^2),
\end{eqnarray}
where $C_9$ is independent of $n$, which means
$$ \|\xi_\tau\|_{L^\infty(0,T;L^2(\Omega))}
+\|\nabla\xi_\tau\|_{L^2(0,T;L^2(\Omega))}
+\frac{\zeta}{8}\|\xi_\tau\|_{L^2(0,T;L^5(\Gamma_2))}\leq C.$$
Similarly, (\ref{equation:eq-55})  comes directly from (\ref{equation:eq-54}).
\end{proof}

\subsection{The Existence of the Solution of the Regularized Problem}

From Lemma \ref{lemma:Lem-6}-Lemma \ref{lemma:Lem-7}, we can see that
 the two discrete solutions of  the Regularized Problem imply the boundedness in
 $\mathcal{V}$ and $\mathcal{Y}$, respectively.  Since
  both $\mathcal{V}$ and $\mathcal{Y}$ are  reflexive, there exist a subsequence of
  $\tilde{\bf B}_\tau$ and a subsequence of ${\bf B}_\tau$, which have
  common subscripts and are denoted the same notations such that
  $$\tilde{\bf B}_\tau\rightharpoonup {\bf B},\ \
  {\bf B}_\tau\rightharpoonup {\bf B},\ in \ \ L^2(0,T;\mathcal{V}),$$
  where $ {\bf B}\in L^2(0,T;\mathcal{V})$ and "$\rightharpoonup $" denote the weak
  convergence of the sequences.

  Similarly,there exist a subsequence of
  $\tilde{\xi}_\tau$ and a subsequence of ${\xi}_\tau$ with the same
subscripts such that
$$\tilde{\xi}_\tau \rightharpoonup \xi,\ \
{\xi}_\tau\rightharpoonup\xi , \  in \ \ L^2(0,T;\mathcal{Y}).$$
Furthermore, based on $L^2(Q_T)$ is embedded compactly into $L^1(Q_T)$, we
have
\begin{eqnarray}
\label{equation:eq-61}
&&\tilde{\bf B}_\tau\longrightarrow {\bf B},
\nabla\times \tilde{\bf B}_\tau\longrightarrow \nabla\times{\bf B},
\ \ \tilde{\xi}_\tau \longrightarrow \xi,\ \  in\ \ L^1(Q_T),\\
\label{equation:eq-62}
&&{\bf B}_\tau\longrightarrow {\bf B},
\nabla\times {\bf B}_\tau\longrightarrow \nabla\times{\bf B},
\ \ \ {\xi}_\tau \longrightarrow \xi,\ \  in\ \ L^1(Q_T),
\end{eqnarray}
where "$ \longrightarrow $" means strong convergence of the sequences.
In this subsection, we shall present the proof that the limits $ {\bf B}, \xi$
solve the regularized problem (\ref{equation:eq-37})- (\ref{equation:eq-38}).
Without  causing the confusions, we always use $\{ \tilde{\bf B}_\tau\},
\{ {\bf B}_\tau\},\{ \tilde{\xi}_\tau\},\{ \xi_\tau\}$ to denote their convergence subsequences
in the  rest of this section.

\begin{theorem}
The limit function ${\bf B}$ is the weak solution of problem
(\ref{equation:eq-37}) with initial dada
 ${\bf B}({\bf x},0)={\bf B}_0({\bf x})$.

\end{theorem}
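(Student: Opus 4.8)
The plan is to rewrite the one-step scheme (\ref{equation:eq-39}) as a single identity for the interpolants that holds for almost every $t\in(0,T)$, test it against a separated function $\Phi\,\varphi(t)$ with $\Phi\in\mathcal{V}$ and $\varphi\in C^1[0,T]$ satisfying $\varphi(T)=0$, integrate over $(0,T)$, and then let $\tau\to0$ term by term. With the notation of (\ref{equation:eq-41}) and writing $f_\tau(\cdot,t)=f(\cdot,n\tau)$ on $(t_{n-1},t_n]$, the scheme reads
\begin{eqnarray*}
(\partial_t\mathbf{B}_\tau,\Phi)&+&(\lambda(\hat\xi_\tau+\theta_0)\nabla\times\tilde{\mathbf{B}}_\tau,\nabla\times\Phi)+\Lambda(\nabla\cdot\tilde{\mathbf{B}}_\tau,\nabla\cdot\Phi)\\
&=&R_\alpha\Big(\frac{f_\tau\,\tilde{\mathbf{B}}_\tau}{1+\gamma|\hat{\mathbf{B}}_\tau|^2},\nabla\times\Phi\Big)+(\mathbf{U}\times\tilde{\mathbf{B}}_\tau,\nabla\times\Phi),
\end{eqnarray*}
where on $(t_{n-1},t_n]$ one has $\partial_t\mathbf{B}_\tau=\delta_\tau\mathbf{B}$. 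Lemma \ref{lemma:Lem-6} controls every term uniformly in $\tau$; since $\hat{\mathbf{B}}_\tau(\cdot,t)=\tilde{\mathbf{B}}_\tau(\cdot,t-\tau)$ is a time-shift by $\tau\to0$ of an $L^1(Q_T)$-convergent, $L^2(Q_T)$-bounded family, it inherits the same limit $\mathbf{B}$ as in (\ref{equation:eq-61})--(\ref{equation:eq-62}), and likewise $\hat\xi_\tau\to\xi$.

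The linear terms pass to the limit immediately. After integrating the time-derivative term by parts,
$$\int_0^T(\partial_t\mathbf{B}_\tau,\Phi)\varphi\,dt=-(\mathbf{B}_\tau(\cdot,0),\Phi)\varphi(0)-\int_0^T(\mathbf{B}_\tau,\Phi)\varphi'\,dt,$$
so the weak convergence $\mathbf{B}_\tau\rightharpoonup\mathbf{B}$ in $L^2(0,T;\mathcal{V})$ together with $\mathbf{B}_\tau(\cdot,0)=\mathbf{B}^0=\mathbf{B}_0$ yields the limit $-(\mathbf{B}_0,\Phi)\varphi(0)-\int_0^T(\mathbf{B},\Phi)\varphi'\,dt$. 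The terms $\Lambda(\nabla\cdot\tilde{\mathbf{B}}_\tau,\nabla\cdot\Phi)$ and $(\mathbf{U}\times\tilde{\mathbf{B}}_\tau,\nabla\times\Phi)$ converge to their natural limits because $\nabla\cdot\tilde{\mathbf{B}}_\tau\rightharpoonup\nabla\cdot\mathbf{B}$ and $\tilde{\mathbf{B}}_\tau\rightharpoonup\mathbf{B}$ weakly in $L^2$, while the respective test data $(\nabla\cdot\Phi)\varphi$ and $(\mathbf{U}\times(\nabla\times\Phi))\varphi$ are fixed $L^2$ functions (using $\mathbf{U}\in L^\infty$).

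The main obstacle is the two nonlinear terms, which both require a weak--strong argument. For the diffusion coefficient I would use that $\lambda$ is continuous and bounded ($0<\lambda_0\le\lambda\le\lambda_M$) and that $\hat\xi_\tau\to\xi$ strongly in $L^1(Q_T)$, hence a.e. along a subsequence; by dominated convergence the datum $\lambda(\hat\xi_\tau+\theta_0)(\nabla\times\Phi)\varphi$ converges strongly in $L^2(Q_T)$, and pairing it against the weakly convergent $\nabla\times\tilde{\mathbf{B}}_\tau\rightharpoonup\nabla\times\mathbf{B}$ produces $\int_0^T(\lambda(\xi+\theta_0)\nabla\times\mathbf{B},\nabla\times\Phi)\varphi\,dt$. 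For the $\alpha$-quench term, set $h_\tau=\frac{f_\tau\tilde{\mathbf{B}}_\tau}{1+\gamma|\hat{\mathbf{B}}_\tau|^2}$; since the denominator is $\ge1$ one has $|h_\tau|\le\|f\|_{L^\infty}|\tilde{\mathbf{B}}_\tau|$, so $h_\tau$ is bounded in $L^2(Q_T)$ by Lemma \ref{lemma:Lem-6}. I would then show $h_\tau\to\frac{f\mathbf{B}}{1+\gamma|\mathbf{B}|^2}$ almost everywhere via the splitting
\begin{eqnarray*}
\frac{f_\tau\tilde{\mathbf{B}}_\tau}{1+\gamma|\hat{\mathbf{B}}_\tau|^2}-\frac{f\mathbf{B}}{1+\gamma|\mathbf{B}|^2}&=&\frac{f_\tau(\tilde{\mathbf{B}}_\tau-\mathbf{B})}{1+\gamma|\hat{\mathbf{B}}_\tau|^2}+f_\tau\mathbf{B}\Big(\frac{1}{1+\gamma|\hat{\mathbf{B}}_\tau|^2}-\frac{1}{1+\gamma|\mathbf{B}|^2}\Big)\\
&&+(f_\tau-f)\frac{\mathbf{B}}{1+\gamma|\mathbf{B}|^2},
\end{eqnarray*}
in which the first and second pieces vanish a.e. by the continuity and boundedness of $\mathbf{v}\mapsto(1+\gamma|\mathbf{v}|^2)^{-1}$ and the a.e. convergence $\tilde{\mathbf{B}}_\tau,\hat{\mathbf{B}}_\tau\to\mathbf{B}$ (the global Lipschitz and bounded behaviour of the quench nonlinearity from Lemma \ref{lemma:Lem-31} keeps $h_\tau$ uniformly controlled), and the third vanishes by the uniform continuity of $f$ in time. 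A bounded sequence in $L^2(Q_T)$ converging a.e. converges weakly to the same limit, so $h_\tau\rightharpoonup\frac{f\mathbf{B}}{1+\gamma|\mathbf{B}|^2}$ and the pairing against $(\nabla\times\Phi)\varphi\in L^2$ passes to the desired limit.

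Collecting all limits gives, for every $\Phi\in\mathcal{V}$ and every admissible $\varphi$, the time-integrated weak form together with the boundary contribution $-(\mathbf{B}_0,\Phi)\varphi(0)$. Choosing first $\varphi\in C_c^\infty(0,T)$ recovers (\ref{equation:eq-37}) in the distributional sense; since the remaining terms then lie in $L^2(0,T;\mathcal{V}')$, we obtain $\partial_t\mathbf{B}\in L^2(0,T;\mathcal{V}')$, whence $\mathbf{B}\in C([0,T];L^2(\Omega))$ has a well-defined trace at $t=0$. Restoring a general $\varphi$ with $\varphi(0)\neq0$ and comparing the boundary terms forces $\mathbf{B}(\cdot,0)=\mathbf{B}_0$, which identifies $\mathbf{B}$ as the weak solution of (\ref{equation:eq-37}) with the prescribed initial datum.
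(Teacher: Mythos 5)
Your proposal is correct and follows the same skeleton as the paper's proof: write the Rothe scheme as an identity for the interpolants, test against a separated function in space and time, integrate over $(0,T)$, and pass to the limit term by term using the weak convergences in $L^2(0,T;\mathcal{V})$ and the strong $L^1(Q_T)$ convergences of (\ref{equation:eq-61})--(\ref{equation:eq-62}). Where you differ is in the execution of the two delicate steps, and in both cases your version is the more complete one. First, for the quasilinear term $\int_{Q_T}\lambda(\hat\xi_\tau+\theta_0)\,\nabla\times\tilde{\bf B}_\tau\cdot\nabla\times{\bf v}$ and for the quench term, the paper simply asserts the limit from the convergence of each factor; you instead run the standard weak--strong pairing (a.e.\ convergence plus dominated convergence puts the coefficient times the fixed test datum strongly in $L^2(Q_T)$, which is then paired against the weakly convergent $\nabla\times\tilde{\bf B}_\tau$), and you identify the weak $L^2$ limit of the bounded, a.e.\ convergent sequence $h_\tau$ rather than invoking strong $L^1$ convergence of a product. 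This is the argument that actually justifies the paper's equations (\ref{equation:eq-65})--(\ref{equation:eq-66}). Second, the paper's proof of this theorem takes $\phi\in C_0^\infty(0,T)$, so no boundary term in time ever appears and the initial condition ${\bf B}(\cdot,0)={\bf B}_0$ is never verified there (the analogous verification is only carried out for $\xi$ in the next theorem, via the test function $(T-t)\Upsilon$); your choice $\varphi(T)=0$ with $\varphi(0)$ free, combined with $\partial_t{\bf B}\in L^2(0,T;\mathcal{V}')$ and the continuity of ${\bf B}$ into $L^2(\Omega)$, recovers the initial datum directly from the boundary term and closes that gap. You also test directly with $\Phi\in\mathcal{V}$ instead of going through $C_0^\infty(\Omega)$ and density, which is harmless. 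The only point to flag is that both you and the paper ultimately rely on the a.e.\ (subsequential) convergence of $\tilde{\bf B}_\tau$ and $\hat\xi_\tau$ imported from (\ref{equation:eq-61})--(\ref{equation:eq-62}); that compactness is asserted in the paper on rather thin grounds, but since you are entitled to use it as given, your argument is sound conditional on it.
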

\begin{proof}
\par
Remember that $C_0^\infty(\Omega)\subset \mathcal{V}$. For any
${\bf v}({\bf x},t)={ \Phi}({\bf x}) \phi(t)$ with ${ \Phi}({\bf x})\in C_0^\infty(\Omega)\ $
 and $\phi(t)\in C_0^\infty(0,T)$, from (\ref{equation:eq-39})
 after calculating, we have
  \begin{eqnarray}
 \label{equation:eq-63}
 \int_{Q_T}\frac{ \partial {\bf B}_\tau}{\partial t} {\bf v}
 &+&\int_{Q_T}\lambda(\hat{\xi}_\tau+\theta_0) \nabla\times \tilde{\bf B}_\tau \nabla\times {\bf v}
 +\int_{Q_T}\Lambda \nabla\cdot \tilde{\bf B}_\tau \cdot\nabla\cdot{\bf v}\nonumber\\
 &=&R_\alpha \int_{Q_T}\frac{f({\bf x},t)}{1+\gamma|\hat{\bf B}_\tau|^2}
 \tilde{\bf B}_\tau \cdot\nabla\times {\bf v}
 +\int_{Q_T}({\bf U}\times \tilde{\bf B}_\tau)\cdot \nabla\times {\bf v}.
 \end{eqnarray}
 For the first term, we have
 \begin{eqnarray}
 \label{equation:eq-64}
 \lim\limits_{\tau\rightarrow 0}\int_{Q_T}\frac{ \partial {\bf B}_\tau}{\partial t} {\bf v}
 =-\lim\limits_{\tau\rightarrow 0}\int_{Q_T}{\bf B}_\tau
 \frac{ \partial {\bf v}}{\partial t}
 =-\int_{Q_T}{\bf B}\cdot
 \frac{ \partial {\bf v}}{\partial t}=\int_{Q_T}
 \frac{ \partial {\bf B}}{\partial t}\cdot {\bf v}.
 \end{eqnarray}
  For the second and the third term, since
  $\nabla\times \tilde{\bf B}_\tau $ and $\nabla\cdot \tilde{\bf B}_\tau$
  converge to $\nabla\times {\bf B}$ and $\nabla\cdot {\bf B}$, respectively,
   we have
    \begin{eqnarray*}
 \lim\limits_{\tau\longrightarrow 0}
 \int_{Q_T}\lambda(\hat{\xi}_\tau+\theta_0) \nabla\times \tilde{\bf B}_\tau \cdot\nabla\times {\bf v}
& = &\int_{Q_T}\lambda({\xi}+\theta_0) \nabla\times{\bf B}\cdot \nabla\times {\bf v},\\
  \lim\limits_{\tau\longrightarrow 0}
 \int_{Q_T}\Lambda \nabla\cdot\tilde{\bf B}_\tau \cdot\nabla\cdot {\bf v}
 &=& \int_{Q_T}\Lambda \nabla\cdot{\bf B}\cdot \nabla\cdot {\bf v}.\ \
 \end{eqnarray*}
Since both $f({\bf x},t)$and $ {
\bf U}$ are Lipschitz continuous and
$\frac{1}{1+\gamma|\hat{\bf B}_\tau|^2}\longrightarrow
\frac{1}{1+\gamma|{\bf B}|^2}$ and
${\bf U}\times \tilde{\bf B}_\tau \longrightarrow {\bf U}\times {\bf B} $
strongly in $L^1(Q_T)$, we have
\begin{eqnarray}
\label{equation:eq-65}
&&\lim\limits_{\tau\longrightarrow 0}\int_{Q_T}\frac{1}{1+\gamma|\hat{\bf B}_\tau|^2}
\tilde{\bf B}_\tau\cdot \nabla\times {\bf v}=
\int_{Q_T}\frac{1}{1+\gamma|{\bf B}|^2}
{\bf B}\cdot \nabla\times {\bf v},\\
\label{equation:eq-66}
&&\lim\limits_{\tau\longrightarrow 0}\int_{Q_T}
{\bf U}\times \tilde{\bf B}_\tau \cdot \nabla\times{\bf v}
\longrightarrow \int_{Q_T}{\bf U}\times {\bf B} \cdot \nabla\times{\bf v}.
\end{eqnarray}
From (\ref{equation:eq-64})-(\ref{equation:eq-66}), we have
  \begin{eqnarray}
 \label{equation:eq-67}
 \int_{Q_T}\frac{ \partial {\bf B}}{\partial t} {\bf v}
 &+&\int_{Q_T}\lambda ({\xi}+\theta_0)\nabla\times {\bf B} \nabla\times {\bf v}
 +\int_{Q_T}\Lambda \nabla\cdot {\bf B} \cdot\nabla\times {\bf v}\nonumber\\
 &=&R_\alpha \int_{Q_T}\frac{f({\bf x},t)}{1+\gamma|{\bf B}|^2}
{\bf B} \cdot\nabla\times {\bf v}
 +\int_{Q_T}({\bf U}\times {\bf B})\cdot \nabla\times {\bf v}.
 \end{eqnarray}
Since $\phi(t)\in C_0^\infty(0,T)$,  for any ${ \Phi}\in C_0^\infty(\Omega)$,
we have
  \begin{eqnarray}
 \label{equation:eq-68}
 \int_{\Omega}\frac{ \partial {\bf B}}{\partial t} { \Phi}
 &+&\int_{\Omega}\lambda({\xi}+\theta_0) \nabla\times {\bf B} \nabla\times { \Phi}
 +\int_{\Omega}\Lambda \nabla\cdot {\bf B} \cdot\nabla\times { \Phi}\nonumber\\
 &=&\int_{\Omega}R_\alpha \int_{Q_T}\frac{f({\bf x},t)}{1+\gamma|{\bf B}|^2}
{\bf B} \cdot\nabla\times { \Phi}
 +\int_{\Omega}({\bf U}\times {\bf B})\cdot \nabla\times { \Phi}.
 \end{eqnarray}
By the density of $C_0^\infty $ in $\mathcal{V}$, the equation
(\ref{equation:eq-68}) holds
for any ${ \Phi}\in \mathcal{V}$, too.
\end{proof}

\begin{theorem}
The limit function $\xi$ is the weak solution of problem (\ref{equation:eq-38})
 with the initial condition $\xi({\bf x},0)=0$.

\end{theorem}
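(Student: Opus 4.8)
The plan is to mirror the proof just given for $\mathbf{B}$: test the semi-discrete scheme (\ref{equation:eq-40}) against a separable function $\Upsilon(\mathbf{x})\phi(t)$ and pass to the limit $\tau\to0$ term by term. A point to watch is that, \emph{unlike} the magnetic case, I cannot take $\Upsilon\in C_0^\infty(\Omega)$, since such a function has vanishing trace on $\Gamma_2$ and would annihilate the whole boundary term; instead I keep $\Upsilon\in\mathcal{Y}$ (which is exactly the test space in the target problem (\ref{equation:eq-38})) and $\phi\in C_0^\infty(0,T)$. Writing the interpolated scheme over $Q_T$ gives
\begin{eqnarray*}
\int_{Q_T}\frac{\partial\xi_\tau}{\partial t}\,\Upsilon\phi
&+&\int_{Q_T}\kappa\nabla\tilde{\xi}_\tau\cdot\nabla\Upsilon\,\phi
+\int_0^T\langle\Psi(\tilde{\xi}_\tau+\theta_0)-\Psi(\theta_0),\Upsilon\rangle_{\Gamma_2}\,\phi\,dt\\
&=&\int_{Q_T}q(\hat{\xi}_\tau)[\mathcal{K}(\tilde{\bf B}_\tau)]_\epsilon\,\Upsilon\phi
-\int_{Q_T}\kappa\nabla\theta_0\cdot\nabla\Upsilon\,\phi,
\end{eqnarray*}
and the goal is to show each integral converges to its counterpart in (\ref{equation:eq-38}), using the weak limits $\tilde{\xi}_\tau\rightharpoonup\xi$, $\xi_\tau\rightharpoonup\xi$ in $L^2(0,T;\mathcal{Y})$ from Lemma \ref{lemma:Lem-7} together with the strong $L^1(Q_T)$ convergences (\ref{equation:eq-61})--(\ref{equation:eq-62}).

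First I would dispatch the three routine limits. For the time-derivative term, integration by parts in $t$ moves $\partial_t$ onto $\phi$ and the weak convergence $\xi_\tau\rightharpoonup\xi$ then yields the limit, exactly as in (\ref{equation:eq-64}). The diffusion term passes by the weak $L^2(Q_T)$ convergence of $\nabla\tilde{\xi}_\tau$ tested against the fixed $\nabla\Upsilon\,\phi$, while the $\kappa\nabla\theta_0$ term is independent of $\tau$. For the source term I would use that $q=\sigma(\cdot+\theta_0)$ is continuous and bounded by $q_{\max}$ and that $\hat{\xi}_\tau\to\xi$ a.e.\ along a further subsequence of the $L^1(Q_T)$-convergent sequence, so $q(\hat{\xi}_\tau)\to q(\xi)$ a.e.\ and is bounded; likewise $\nabla\times\tilde{\bf B}_\tau\to\nabla\times{\bf B}$ a.e.\ makes $\mathcal{K}(\tilde{\bf B}_\tau)\to\mathcal{K}({\bf B})$ a.e., and since the cut-off $[\,\cdot\,]_\epsilon$ is continuous and bounded by $1/\epsilon$ (see (\ref{equation:eq-57})), the product $q(\hat{\xi}_\tau)[\mathcal{K}(\tilde{\bf B}_\tau)]_\epsilon$ converges a.e.\ and stays uniformly bounded. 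Dominated convergence then gives strong $L^2(Q_T)$ convergence to the regularized source of (\ref{equation:eq-38}). This is precisely where the $\epsilon$-regularization earns its keep: $\mathcal{K}$ is quadratic in $\nabla\times{\bf B}$ and could not otherwise be controlled by the mere $L^1(Q_T)$ convergence of $\nabla\times\tilde{\bf B}_\tau$.

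The main obstacle is the nonlinear boundary term $\int_0^T\langle\Psi(\tilde{\xi}_\tau+\theta_0)-\Psi(\theta_0),\Upsilon\rangle_{\Gamma_2}\phi\,dt$, because $\Psi$ has quartic growth and acts on the \emph{trace} of $\tilde{\xi}_\tau$ on $\Gamma_2$, where weak convergence alone is useless. My approach is to upgrade to a.e.\ convergence of the traces. The bounds of Lemma \ref{lemma:Lem-7} place $\tilde{\xi}_\tau$ in $L^2(0,T;H^1(\Omega))\cap L^\infty(0,T;L^2(\Omega))$ with the extra $L^2(0,T;L^5(\Gamma_2))$ control, and the difference quotients supplied by the scheme bound a discrete time derivative in $\mathcal{Y}'$; an Aubin--Lions argument then gives strong convergence in $L^2(0,T;L^2(\Omega))$ and, via interpolation and compactness of the trace map $H^1(\Omega)\to L^q(\Gamma_2)$ (compact for $q<4$ in three dimensions, well within our bounds), strong convergence of the traces in some $L^q((0,T)\times\Gamma_2)$, hence a.e.\ on $(0,T)\times\Gamma_2$ along a subsequence. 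Continuity of $\Psi$ then forces $\Psi(\tilde{\xi}_\tau+\theta_0)\to\Psi(\xi+\theta_0)$ a.e.; combined with the uniform $L^{5/4}((0,T)\times\Gamma_2)$ bound on $\Psi(\tilde{\xi}_\tau+\theta_0)-\Psi(\theta_0)$ coming from the $L^5(\Gamma_2)$ estimate and reflexivity of $L^{5/4}$, this identifies the weak-$L^{5/4}$ limit as $\Psi(\xi+\theta_0)-\Psi(\theta_0)$, which suffices to pass to the limit against the fixed factor $\Upsilon\phi\in L^5((0,T)\times\Gamma_2)$. Should the trace compactness prove delicate, the fallback is Minty's monotonicity trick, invoking the monotonicity of $\mathcal{L}$ and of $\Psi$ from Lemma \ref{lemma:Lem-3} together with the energy identity of Lemma \ref{lemma:Lem-7} to establish the requisite $\limsup$ inequality.

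Finally, having obtained (\ref{equation:eq-38}) for all separable test functions $\Upsilon\phi$ with $\Upsilon\in\mathcal{Y}$ and $\phi\in C_0^\infty(0,T)$, I would remove the separability restriction by a density argument in time, as in (\ref{equation:eq-67})--(\ref{equation:eq-68}). For the initial condition, I would choose $\phi\in C^\infty([0,T])$ with $\phi(T)=0$ and $\phi(0)=1$; integrating the time-derivative term by parts produces the contribution $-(\xi_\tau(\cdot,0),\Upsilon)\phi(0)$, which vanishes because $\xi^0=0$, and comparing with the integration-by-parts identity for the limit equation (which carries $-(\xi(\cdot,0),\Upsilon)\phi(0)$) forces $(\xi(\cdot,0),\Upsilon)=0$ for every $\Upsilon\in\mathcal{Y}$, i.e.\ $\xi(\mathbf{x},0)=0$. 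I expect the trace/boundary step to be the only genuinely hard part; every other passage to the limit is routine given the uniform estimates and the cut-off.
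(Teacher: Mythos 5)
Your proposal is correct and follows the same overall route as the paper: test the interpolated scheme against separable functions, pass to the limit term by term, extend by density, and recover the initial condition by integrating the time derivative by parts against a test function that does not vanish at $t=0$. Where you genuinely diverge is in the two places where the paper is weakest, and in both cases your version is the more defensible one. First, the paper takes $\Upsilon\in C_0^\infty(\Omega)$, which has zero trace on $\Gamma_2$ and would annihilate the very boundary term the proof must handle; you correctly keep $\Upsilon\in\mathcal{Y}$ so that the nonlinear radiation term survives the limit passage. Second, for that boundary term the paper simply asserts that ``$L^5(\Gamma_2)$ is embedded compactly into $L^4(\Gamma_2)$'' and that $\tilde{\xi}_\tau\to\xi$ strongly in $L^4(\Gamma_2)$; the stated embedding is not compact (on a finite measure set it is merely continuous), so the paper's justification does not stand as written. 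Your route --- a uniform bound on the discrete time derivative in $\mathcal{Y}'$, Aubin--Lions compactness in $L^2(0,T;L^2(\Omega))$, compactness of the trace $H^1(\Omega)\to L^q(\Gamma_2)$ for $q<4$, a.e.\ convergence of traces, and identification of the weak $L^{5/4}$ limit of $\Psi(\tilde{\xi}_\tau+\theta_0)-\Psi(\theta_0)$ against $\Upsilon\phi\in L^5$ --- is the standard and correct repair, and the Minty fallback via Lemma \ref{lemma:Lem-3} is a legitimate alternative. The one step you should spell out is the $\mathcal{Y}'$ bound on $\delta_\tau\xi$: it requires reading the boundary contribution $\Psi(\xi^n+\theta_0)-\Psi(\theta_0)$ as an element of $L^{5/4}(\Gamma_2)\subset\mathcal{Y}'$ via the $L^5(\Gamma_2)$ estimate of Lemma \ref{lemma:Lem-7}, and uses the fixed-$\epsilon$ bound $|[\mathcal{K}]_\epsilon|\le 1/\epsilon$ for the source; both ingredients are available, so the gap is expository rather than substantive. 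A minor bookkeeping point, inherited from the paper itself: the scheme carries $q(\xi^{n-1})[\mathcal{K}({\bf B}^n)]_\epsilon$ while (\ref{equation:eq-38}) carries $[q(\xi)\mathcal{K}({\bf B})]_\epsilon$; you should either state the limit in the form the scheme actually produces or note that the two regularizations can be interchanged.
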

\begin{proof}
For any $\eta({\bf x},t)=\Upsilon({\bf x})\phi(t)$ with
$\Upsilon({\bf x})\in C^\infty_0(\Omega)$ and $\phi(t)\in C_0^\infty(0,T), $
from (\ref{equation:eq-40}) we know that
\begin{eqnarray}
\int_{Q_T}\frac{\partial \xi_\tau}{\partial t} \eta
&+&\int_{Q_T}\kappa\nabla \tilde{\xi}_\tau\cdot \nabla\eta
+\int_0^T\int_{\Gamma_2}(\Psi(\tilde{\xi}_\tau+\theta_0)-\Psi(\theta_0))\eta\nonumber\\
\label{equation:eq-69}
&=&
\int_{Q_T}q(\hat{\xi}_\tau)[\mathcal{K}(\tilde{\bf B}_\tau)]_\epsilon\eta-\int_{Q_T}\kappa\nabla\theta_0\cdot \nabla\eta.
\end{eqnarray}

Similar to (\ref{equation:eq-64}), and the weak convergence
in $L^2(0,T;\mathcal{Y})$,
we have
\begin{eqnarray}
\label{equation:eq-70}
\lim\limits_{\tau\longrightarrow 0}\int_{Q_T}\frac{\partial \xi_\tau}{\partial t}
\eta=\int_{Q_T}\frac{\partial \xi}{\partial t}\eta, \ \
\lim\limits_{\tau\longrightarrow 0}\int_{Q_T}
\kappa\nabla \tilde{\xi}_\tau\cdot \nabla\eta \longrightarrow
\int_{Q_T}
\kappa\nabla {\xi}\cdot \nabla\eta.
\end{eqnarray}
Since $ L^5(\Gamma_2)$ is embedded compactly into $ L^4(\Gamma_2)$ and
$\tilde{\xi}_\tau$ converges strongly to $\xi$ in  $ L^4(\Gamma_2)$, we have
\begin{eqnarray}
\label{equation:eq-71}
\lim\limits_{\tau\longrightarrow 0}\int_0^T\int_{\Gamma_2}
(\Psi(\tilde{\xi}_\tau+\theta_0)-\Psi(\theta_0))\eta
\longrightarrow
\int_0^T\int_{\Gamma_2}
(\Psi({\xi}+\theta_0)-\Psi(\theta_0))\eta.
\end{eqnarray}
The right hand of the equation (\ref{equation:eq-69}) can be considered
 from the the uniform boundedness and the strong convergence  of $\hat{\xi}_\tau$ in
 $L^2(0,T,\mathcal{Y})$ and the strong convergence  of
  $ \nabla\times \tilde{\bf B}_\tau \rightarrow \nabla\times {\bf B}$,
  $ {\bf U}\times \tilde{\bf B}_\tau \rightarrow {\bf U}\times {\bf B}$,
  $ \frac{f({\bf x},t)\tilde{\bf B}_\tau}{1+\gamma|\hat{\bf B}_\tau|^2}
  \rightarrow \frac{f({\bf x},t){\bf B}}{1+\gamma|{\bf B}|^2}$
in $L^1(Q_T)$, we have
\begin{eqnarray}
\label{equation:eq-72}
\lim\limits_{\tau\rightarrow 0}
\int_{Q_T}[q(\hat{\xi}_\tau)\mathcal{K}(\tilde{\bf B}_\tau)]_\epsilon\eta
\rightarrow
\int_{Q_T}[q({\xi})\mathcal{K}({\bf B})]_\epsilon\eta.
\end{eqnarray}
From (\ref{equation:eq-69})-(\ref{equation:eq-72}), we can get
\begin{eqnarray}
\label{equation:eq-73}
\int_{Q_T}\frac{\partial \xi}{\partial t}\eta&+&
\int_{Q_T}\kappa\nabla {\xi}\cdot \nabla\eta+\int_0^T\int_{\Gamma_2}
(\Psi({\xi}+\theta_0)-\Psi(\theta_0))\eta\nonumber\\
&=&
\int_{Q_T}[q({\xi})\mathcal{K}({\bf B})]_\epsilon\eta-\int_{Q_T}\kappa\nabla\theta_0\cdot \nabla\eta.
\end{eqnarray}
By the arbitrariness of $\phi(t)$, it yields
\begin{eqnarray}
\label{equation:eq-74}
\int_{\Omega}\frac{\partial \xi}{\partial t}\Upsilon&+&
\int_{\Omega}\kappa\nabla \tilde{\xi}\cdot \nabla\Upsilon+\int_{\Gamma_2}
(\Psi({\xi}+\theta_0)-\Psi(\theta_0))\Upsilon\nonumber\\
&=&
\int_{\Omega}q({\xi})\mathcal{K}({\bf B})\Upsilon-\int_{\Omega}\kappa\nabla\theta_0\cdot \nabla\Upsilon,\ \ \forall \Upsilon \in
\mathcal{Y}\cap C^\infty(\Omega).
\end{eqnarray}
By the density of $\mathcal{Y}\cap C^\infty(\Omega)$ in $\mathcal{Y}$,
the above equation (\ref{equation:eq-74}) holds for any
 $\Upsilon \in \mathcal{Y}.$

\par
Taking any $\Upsilon\in C_0^\infty$ and let $\eta(t)=(T-t)\Upsilon$, we
have $\eta(0)=T\Upsilon, \eta(T)=0.$ Using integration by part, we have
\begin{eqnarray*}
&&T\int_\Omega \xi(0)\cdot \Upsilon({\bf x})
=-\int_0^T\int_\Omega\frac{\partial}{\partial t}(\xi \cdot \eta)
=\int_0^T\int_\Omega \xi\cdot \Upsilon-\int_0^T\int_\Omega
\frac{\partial \xi}{\partial t}\cdot \eta\\
&=&\lim\limits_{\tau\rightarrow 0}\int_0^T\int_\Omega {\xi}_\tau\cdot \Upsilon
+\int_0^T\int_{\Gamma_2}(\Psi(\xi+\theta_0)-\Psi(\theta_0))\eta
+\int_0^T\int_\Omega
[\kappa \nabla\xi \cdot \nabla \eta- q(\xi)\mathcal{K}({
\bf B})\cdot\eta]\\
&&=\lim\limits_{\tau\rightarrow 0}\int_0^T\int_\Omega [{\xi}_\tau\cdot \Upsilon
 +[\kappa \nabla\tilde{\xi}_\tau \cdot \nabla \eta- q(\hat{\xi}_\tau)
 \mathcal{K}(\tilde{
\bf B}_\tau)\cdot\eta]
+\lim\limits_{\tau\rightarrow 0}\int_0^T\int_{\Gamma_2}
(\Psi(\tilde{\xi}_\tau+\theta_0)-\Psi(\theta_0))\cdot\eta\\
&&=\lim\limits_{\tau\rightarrow 0}[\int_0^T\int_\Omega
\xi_\tau\cdot \Upsilon-\int_0^T\int_\Omega
\frac{\partial \xi_\tau}{\partial t}\cdot \eta]=\lim\limits_{\tau\rightarrow 0}
T\int_\Omega\xi_\tau(0)\cdot\Upsilon({\bf x})=0, \forall \Upsilon({\bf x})
\in C_0^\infty(\Omega).
\end{eqnarray*}
Therefore, $\xi({\bf x},0)=0,$  which finish the proof.

\end{proof}

\subsection{Stability of the Regularized Problem}
Now we present the stability estimate of the regularized problem
 to ensure the well-posedness of the equations
 (\ref{equation:eq-19})- (\ref{equation:eq-20}).
 \begin{lemma}
 \label{lemma:Lem-71}
 There exists a constant $C_{10}$ depending of $\Omega, T,
 \lambda_0, \Lambda, R_\alpha,
\|f({\bf x},t)\|_{L^\infty(0,T,L^\infty(\Omega))}$ and
  $ \|{\bf U}\|_{L^\infty(0,T,L^\infty(\Omega))} $ such that
  \begin{eqnarray*}
\|{\bf B}\|_{L^\infty(0,T;L^2(\Omega))}+\sqrt{\lambda_0}\|\nabla\times{\bf B}\|_{L^2(0,T;L^2(\Omega))}
+\Lambda\|\nabla\cdot{\bf B}\|_{L^2(0,T;L^2(\Omega))}\leq C_{10}\|{\bf B}_0\|_0.
\end{eqnarray*}
 \end{lemma}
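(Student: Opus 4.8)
The plan is to obtain the estimate for the limit function ${\bf B}$ by transferring the discrete bounds of Lemma~\ref{lemma:Lem-6} through the weak limit, rather than re-deriving an energy identity for ${\bf B}$ from scratch. The key point is that the constant $C_8$ in (\ref{equation:eq-47}) is independent of $n$ and $\tau$ (its $T$-dependence enters only through the Gronwall step) and is proportional to $\|{\bf B}^0\|_0^2=\|{\bf B}_0\|_0^2$. Taking the supremum over $1\le n\le N$ in the first term of (\ref{equation:eq-47}) and reading the two sums at $n=N$ therefore yields, for the piecewise-constant interpolant, the uniform-in-$\tau$ bounds
\begin{eqnarray*}
\|\tilde{\bf B}_\tau\|_{L^\infty(0,T;L^2(\Omega))}^2&\le& C_8\|{\bf B}_0\|_0^2,\\
\lambda_0\|\nabla\times\tilde{\bf B}_\tau\|_{L^2(0,T;L^2(\Omega))}^2&\le& C_8\|{\bf B}_0\|_0^2,\\
\Lambda\|\nabla\cdot\tilde{\bf B}_\tau\|_{L^2(0,T;L^2(\Omega))}^2&\le& C_8\|{\bf B}_0\|_0^2,
\end{eqnarray*}
and the same bounds hold for the piecewise-linear interpolant ${\bf B}_\tau$, since every time-slice of ${\bf B}_\tau$ is a convex combination of nodal values.

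First I would record that, by the previous subsection, $\tilde{\bf B}_\tau\rightharpoonup{\bf B}$ and ${\bf B}_\tau\rightharpoonup{\bf B}$ weakly in $L^2(0,T;\mathcal{V})$; since $\nabla\times$ and $\nabla\cdot$ are bounded linear maps on $\mathcal{V}$, this forces $\nabla\times\tilde{\bf B}_\tau\rightharpoonup\nabla\times{\bf B}$ and $\nabla\cdot\tilde{\bf B}_\tau\rightharpoonup\nabla\cdot{\bf B}$ weakly in $L^2(0,T;L^2(\Omega))$. Weak lower semicontinuity of the $L^2(0,T;L^2(\Omega))$ norm then transfers the curl bound to the limit,
\begin{eqnarray*}
\sqrt{\lambda_0}\|\nabla\times{\bf B}\|_{L^2(0,T;L^2(\Omega))}
&\le&\liminf_{\tau\to 0}\sqrt{\lambda_0}\|\nabla\times\tilde{\bf B}_\tau\|_{L^2(0,T;L^2(\Omega))}\\
&\le&\sqrt{C_8}\,\|{\bf B}_0\|_0,
\end{eqnarray*}
and an identical argument produces the analogous control of $\Lambda\|\nabla\cdot{\bf B}\|_{L^2(0,T;L^2(\Omega))}$.

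For the temporal $L^\infty$ term I would argue by weak-$*$ compactness rather than weak lower semicontinuity, since weak convergence in $L^2(0,T;\mathcal{V})$ does not by itself control the $L^\infty$-in-time norm. The uniform bound in $L^\infty(0,T;L^2(\Omega))=\big(L^1(0,T;L^2(\Omega))\big)^*$ gives, via Banach--Alaoglu, a weak-$*$ convergent subsequence whose limit must coincide with ${\bf B}$ by uniqueness of distributional limits; weak-$*$ lower semicontinuity of the norm then yields $\|{\bf B}\|_{L^\infty(0,T;L^2(\Omega))}\le\sqrt{C_8}\,\|{\bf B}_0\|_0$. Summing the three contributions gives the asserted inequality with $C_{10}$ a fixed multiple of $\sqrt{C_8}$ (hence carrying the same dependencies, including the $T$-dependence inherited from Gronwall). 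I expect this last step to be the only genuinely delicate point: one must invoke the separate $L^\infty(0,T;L^2)$ estimate and identify the weak-$*$ limit before lower semicontinuity may be applied. (Alternatively, one could test (\ref{equation:eq-37}) with $\Phi={\bf B}(\cdot,t)$, use $\lambda\ge\lambda_0$ together with the Young-inequality splitting already carried out in Lemma~\ref{lemma:Lem-6} to absorb the $\nabla\times{\bf B}$ contributions, and close with Gronwall's inequality; this route makes the $e^{CT}$-type dependence of $C_{10}$ explicit, at the cost of requiring the extra regularity $\partial_t{\bf B}\in L^2(0,T;\mathcal{V}')$ to legitimize the energy identity.)
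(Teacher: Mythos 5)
Your proposal is correct, but its main line of argument is genuinely different from the paper's. The paper proves this lemma directly at the continuous level: it takes $\Phi={\bf B}$ in (\ref{equation:eq-37}), uses $\lambda(\xi+\theta_0)\geq\lambda_0$ to obtain $\frac12\frac{d}{dt}\|{\bf B}\|_0^2+\lambda_0\|\nabla\times{\bf B}\|_0^2+\Lambda\|\nabla\cdot{\bf B}\|_0^2$ on the left, integrates over $0\leq t\leq s$, splits the two right-hand terms by Young's inequality with parameters $a_1,a_2$ chosen so that $\lambda_0-a_1-a_2>0$, and closes with Gronwall --- i.e.\ exactly the ``alternative'' you sketch in your final parenthetical. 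Your primary route instead inherits the bound from the discrete estimates of Lemma~\ref{lemma:Lem-6} via weak lower semicontinuity of the $L^2(0,T;L^2(\Omega))$ norm (for the curl and divergence terms) and weak-$*$ lower semicontinuity in $L^\infty(0,T;L^2(\Omega))=\bigl(L^1(0,T;L^2(\Omega))\bigr)'$ (for the time-sup term). Both are sound. Your route buys you freedom from justifying the energy identity $\frac{d}{dt}\|{\bf B}\|_0^2=2\langle\partial_t{\bf B},{\bf B}\rangle$ for the limit function, which formally needs $\partial_t{\bf B}\in L^2(0,T;\mathcal{V}')$ --- a point the paper passes over silently; the paper's route yields the constant and its $e^{CT}$-type dependence explicitly and does not presuppose that ${\bf B}$ arose from a particular approximating sequence. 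Two small caveats for your version: the transfer of the $L^2$-in-time bounds to the piecewise-linear interpolant ${\bf B}_\tau$ picks up a $\|\nabla\times{\bf B}^0\|_0$ contribution (the sums in (\ref{equation:eq-47}) start at $i=1$), so you should either argue with the piecewise-constant interpolant $\tilde{\bf B}_\tau$ alone --- which has the same weak limit --- or assume ${\bf B}_0\in\mathcal{V}$; and the identification of the weak-$*$ cluster point with ${\bf B}$ is cleanest if you note that weak-$*$ convergence in $L^\infty(0,T;L^2)$ implies weak convergence in $L^2(Q_T)$, where the limit is already known to be ${\bf B}$.
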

\begin{proof}
Taking $ \Phi={\bf B}$ in (\ref{equation:eq-37}) and using $\lambda(\xi+\theta_0)\geq \lambda_0>0$, we have
\begin{eqnarray}
\frac{1}{2}\frac{\partial}{\partial t}\|{\bf B}\|_0^2&+&
\lambda_0\|\nabla\times{\bf B}\|_0^2
+{\Lambda}\|\nabla\cdot{\bf B}\|_0^2\nonumber\\
&\leq&
R_\alpha(\frac{f({\bf x},t){\bf B}}{1+\gamma|{\bf B}|^2}, \nabla\times{\bf B})
\label{equation:eq-75}
+({\bf U\times B},\nabla\times{\bf B}).
\end{eqnarray}
Integrating the above with respect to $0\leq t\leq s$ and using Cauchy inequality
and young inequality, we have
\begin{eqnarray}
\|{\bf B}\|^2_0&+&\int_0^s(\lambda_0\|\nabla\times{\bf B}\|_0^2+{\Lambda}\|\nabla\cdot{\bf B}\|_0^2)\nonumber\\
&\leq& \frac{R_\alpha\|f({\bf x},t\|_{L^\infty(0,T,L^\infty(\Omega))}}{4a_1}
\int_0^s\|{\bf B}\|_0^2+\int_0^sa_1\|\nabla\times{\bf B}\|_0^2\nonumber\\
\label{equation:eq-76}
&+&\frac{\|{\bf U}\|_{L^\infty(0,T,L^\infty(\Omega))}}{4a_2}\int_0^s\|{\bf B}\|_0^2
+\int_0^sa_2\|\nabla\times{\bf B}\|_0^2+\|{\bf B}_0\|^2_0
\end{eqnarray}
Taking $a_1,a_2$ such that $\lambda_0-a_1-a_2>0$, by employing
the Growall's inequality, we have
\begin{eqnarray}
\label{equation:eq-77}
\|{\bf B}\|_{L^\infty(0,T;L^2(\Omega))}+\sqrt{\lambda_0}\|\nabla\times{\bf B}\|_{L^2(0,T;L^2(\Omega))}
+\sqrt{\Lambda}\|\nabla\cdot{\bf B}\|_{L^2(0,T;L^2(\Omega))}\leq C_{10}\|{\bf B}_0\|_0.
\end{eqnarray}
The proof is completed.

\end{proof}

\begin{lemma}
\label{lemma:lem-8}
There exits a constant $C_{11}$ depending on $\Omega, T,\kappa,
 \lambda, \Lambda, R_\alpha,
\|f({\bf x},t)\|_{L^\infty(0,T,L^\infty(\Omega))}$ and
  $ \|{\bf U}\|_{L^\infty(0,T,L^\infty(\Omega))} $ such that
  \begin{eqnarray}
\label{equation:eq-78}
  \|\xi\|_{L^\infty(0,T;L^1(\Omega))}+\|\xi\|_{L^\infty(0,T;L^4(\Gamma_2))}^4
  \leq C_{11}\|{\bf B}_0\|_0.
\end{eqnarray}

\end{lemma}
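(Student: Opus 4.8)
The plan is to derive an $L^1$-type estimate by testing the regularized temperature equation (\ref{equation:eq-38}) with a smooth, odd, nondecreasing approximation $S_\delta$ of the sign function (so that $|S_\delta|\le 1$, $S_\delta'\ge 0$, and $S_\delta(s)\to\operatorname{sign}(s)$ as $\delta\to 0$), rather than with $\xi$ itself as in Lemma \ref{lemma:Lem-7}. Since every $\xi\in\mathcal{Y}$ vanishes on $\Gamma_1$ and $S_\delta(0)=0$, the function $S_\delta(\xi)$ again lies in $\mathcal{Y}$ and is therefore admissible. Writing $\Sigma_\delta$ for the primitive of $S_\delta$ (a smooth convex approximation of $|\cdot|$), the parabolic term becomes $(\partial_t\xi,S_\delta(\xi))=\frac{d}{dt}\int_\Omega\Sigma_\delta(\xi)$, which tends to $\frac{d}{dt}\|\xi\|_{L^1(\Omega)}$ as $\delta\to 0$.

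First I would dispose of the sign-definite contributions. The diffusion term equals $\int_\Omega\kappa\,S_\delta'(\xi)|\nabla\xi|^2\ge 0$ and is kept on the left. By the monotonicity of $\Psi$ established in the proof of Lemma \ref{lemma:Lem-2}, $\Psi(\xi+\theta_0)-\Psi(\theta_0)$ carries the sign of $\xi$, so the boundary term $\langle\Psi(\xi+\theta_0)-\Psi(\theta_0),S_\delta(\xi)\rangle_{\Gamma_2}$ is nonnegative and converges to $\int_{\Gamma_2}|\Psi(\xi+\theta_0)-\Psi(\theta_0)|$; a mean-value bound using $\Psi'(t)=4\zeta|t|^3+\omega$ then gives $\int_{\Gamma_2}|\Psi(\xi+\theta_0)-\Psi(\theta_0)|\ge c\,\zeta\int_{\Gamma_2}|\xi|^4-C(\zeta,\omega,\theta_0,\Gamma_2)$, which is the source of the $L^4(\Gamma_2)$ control. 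For the Joule source I would use $|S_\delta|\le 1$ together with the fact that the cut-off only decreases the $L^1$ norm, so that this term is bounded by $\|q(\xi)\mathcal{K}({\bf B})\|_{L^1(\Omega)}\le q_{\max}\|\mathcal{K}({\bf B})\|_{L^1(\Omega)}$; integrating in time and invoking $\int_0^T\|\mathcal{K}({\bf B})\|_{L^1}\,dt\le C\|{\bf B}_0\|_0^2$, which follows from the definition of $\mathcal{K}$ and the bounds $\|\nabla\times{\bf B}\|_{L^2(0,T;L^2)},\|{\bf B}\|_{L^2(0,T;L^2)}\le C\|{\bf B}_0\|_0$ of Lemma \ref{lemma:Lem-71}. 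Crucially this bound is independent of $\epsilon$, which is exactly what the later passage $\epsilon\to 0$ will require.

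The delicate term is $-(\kappa\nabla\theta_0,\nabla S_\delta(\xi))=-\int_\Omega\kappa\,S_\delta'(\xi)\,\nabla\theta_0\cdot\nabla\xi$, because a naive Young splitting leaves $\tfrac12\int_\Omega\kappa\,S_\delta'(\xi)|\nabla\theta_0|^2$, whose limit as $\delta\to 0$ need not vanish on the set $\{\xi=0\}$. I would instead integrate this term by parts back onto $\theta_0$, writing it as $(\nabla\cdot(\kappa\nabla\theta_0),S_\delta(\xi))$ minus a boundary contribution on $\partial\Omega$; since $S_\delta(\xi)=0$ on $\Gamma_1$ and $|S_\delta(\xi)|\le 1$ elsewhere, this is controlled by $\|\nabla\cdot(\kappa\nabla\theta_0)\|_{L^1(\Omega)}+\|\kappa\frac{\partial\theta_0}{\partial{\bf n}}\|_{L^1(\Gamma_2)}$, a fixed constant once the natural regularity of the background field $\theta_0$ is assumed. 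This is the main obstacle of the proof and the reason the sign-function test must be paired with an integration by parts rather than with a Young inequality.

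Finally I would integrate over $(0,s)$, use the initial datum $\xi({\bf x},0)=0$ so that $\int_\Omega\Sigma_\delta(\xi(0))\to 0$, pass to the limit $\delta\to 0$ (monotone convergence in the boundary term, while the nonnegative diffusion term is simply discarded), and take the supremum over $s\in[0,T]$. This delivers $\|\xi\|_{L^\infty(0,T;L^1(\Omega))}$ together with control of $\int_0^T\!\!\int_{\Gamma_2}|\xi|^4$, i.e. the asserted estimate (with the right-hand side scaling as $\|{\bf B}_0\|_0^2$ inherited from the quadratic Joule term). If one prefers to avoid the chain rule for $\Sigma_\delta$ at the level of the limit equation, the same computation can be run on the semi-discrete scheme (\ref{equation:eq-40}) with $\Upsilon=S_\delta(\xi^n)$, using the convexity inequality $(\xi^n-\xi^{n-1})S_\delta(\xi^n)\ge\Sigma_\delta(\xi^n)-\Sigma_\delta(\xi^{n-1})$ to telescope the discrete time term before letting $\tau\to 0$.
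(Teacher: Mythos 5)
Your proposal follows essentially the same route as the paper: the paper tests (\ref{equation:eq-38}) with $h_\rho(\xi)$ where $h_\rho(s)=\frac{1}{\rho}\,\mathrm{sign}(s)\min(|s|,\rho)$ (a piecewise-linear version of your $S_\delta$), identifies the time term with $\frac{\partial}{\partial t}\int_\Omega|\xi|$ in the limit $\rho\to 0$, discards the nonnegative diffusion term, uses the monotonicity of $\Psi$ to extract $\frac{\zeta}{8}\|\xi\|_{L^4(\Gamma_2)}^4$ from the boundary term, and bounds the Joule term by $\int_\Omega |q(\xi)\mathcal{K}({\bf B})|$ via $|h_\rho|\le 1$ and the cut-off, closing with Lemma \ref{lemma:Lem-71} and Gronwall. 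The one place where you genuinely diverge is the term $-(\kappa\nabla\theta_0,\nabla h_\rho(\xi))=-\frac{1}{\rho}\int_{\{|\xi|<\rho\}}\kappa\nabla\theta_0\cdot\nabla\xi$: the paper does not treat it explicitly and simply absorbs a $\|\nabla\theta_0\|_0$ into the constant in (\ref{equation:eq-85}), whereas a Young splitting leaves $\frac{1}{2\rho}\int_{\{|\xi|<\rho\}}\kappa|\nabla\theta_0|^2$, which does not obviously vanish as $\rho\to 0$. Your fix --- integrating by parts back onto $\theta_0$ and paying with the extra regularity $\nabla\cdot(\kappa\nabla\theta_0)\in L^1(\Omega)$ plus a boundary trace --- is a legitimate and more careful resolution of a step the paper glosses over, at the cost of an assumption on $\theta_0$ the paper never states. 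You are also right that the Joule contribution is quadratic in ${\bf B}$, so the right-hand side should read $C_{11}(\|{\bf B}_0\|_0^2+\dots)$ rather than $C_{11}\|{\bf B}_0\|_0$; the paper's own intermediate bound (\ref{equation:eq-84}) confirms this. Your proof is correct modulo the added hypothesis on $\theta_0$.
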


\begin{proof}
First we need define a function
$$h_\rho(s)=\frac{1}{\rho}sign(s)\min (|s|, \rho).$$
Obviously, $ h_\rho(s)$ is a bounded, absolutely continuous and increasing
 function
in $ \mathbf{R}$. for any $\eta \in \mathcal{Y},$ it can be calculate to get
$ h_\rho(\eta)\in \mathcal{Y}$ and
\begin{eqnarray}
\label{equation:eq-79} \nabla h_\rho(\eta)=\chi_{\{ {\bf x}\in \Omega: |\eta|<\rho  \} }
\frac{1}{\rho}\nabla \eta,
\ \ \lim\limits_{\rho\rightarrow 0}h_\rho(\eta)\rightarrow sign(\eta),
\ a.e.\ in\ \Omega,
\end{eqnarray}
where $\chi_{\{ {\bf x}\in \Omega: |\eta|<\rho  \} }  $
is the characteristic function.

\par
Taking $\Upsilon=h_\rho(\Upsilon)$ in (\ref{equation:eq-38}), we have
\begin{eqnarray}
(\partial_t\xi, h_\rho(\Upsilon))&+&(\kappa\nabla\xi, \nabla h_\rho(\Upsilon))
+<(\Psi(\xi+\theta_0)-\Psi(\theta_0)),h_\rho(\Upsilon)>_{\Gamma_2}\nonumber\\
\label{equation:eq-80}
&=&
([q(\xi)\mathcal{K}({\bf B})]_\epsilon,h_\rho(\Upsilon))-(\kappa\nabla\theta_0,\nabla h_\rho(\Upsilon)).
\end{eqnarray}
We need analyze (\ref{equation:eq-80}) term by term. From the convergence
 of (\ref{equation:eq-79}), we have
 \begin{eqnarray}
 \label{equation:eq-81}
 &&\lim\limits_{\rho\rightarrow 0}\int_\Omega\frac{\partial\xi}{\partial t}
  h_\rho(\xi)
 =\int_\Omega\frac{\partial\xi}{\partial t} sign(\xi)
 =\frac{\partial}{\partial t}\int_\Omega|\xi|,\\
  \label{equation:eq-82}
 &&\int_\Omega\kappa \nabla\xi\cdot \nabla h_\rho(\xi) =\frac{1}{\rho}
 \int_\Omega \chi_{\{ {\bf x}\in \Omega: |\xi|<\rho  \} }
 \kappa|\nabla\xi|^2 \geq 0,\\
  \label{equation:eq-83}
  &&\lim\limits_{\rho\rightarrow 0}\int_{\Gamma_2}
  (\Psi(\xi+\theta_0)-\Psi(\theta_0))h_\rho(\xi)
  \geq \frac{\zeta}{8}\|\xi\|_{L^4(\Gamma_2)}^4
  +\omega\|\xi\|_{L^1(\Gamma_2)},
 \end{eqnarray}
and
 \begin{eqnarray}
|\int_\Omega[&q&(\xi)\mathcal{K}({\bf B})]_\epsilon h_\rho(\xi)|
\leq |\int_\Omega q(\xi)\mathcal{K}({\bf B})|
\leq C(\|\nabla\times{\bf B}\|_0^2\nonumber\\
&+&\frac{R_\alpha\|f({\bf x},t)\|_{L^\infty(0,T;L^\infty(\Omega))}}{4a_3}\|{\bf B}\|_0^2
+a_3\|\|\nabla\times{\bf B}\|_0^2\nonumber\\
\label{equation:eq-84}
&+&\frac{\|{\bf U}\|_{L^\infty(0,T;L^\infty(\Omega))}}{4a_4}\|{\bf B}\|_0^2
+a_4\|\|\nabla\times{\bf B}\|_0^2.
 \end{eqnarray}
From (\ref{equation:eq-81})- (\ref{equation:eq-84}) and Lemma \ref{lemma:Lem-71}, we have
 \begin{eqnarray}
 \label{equation:eq-85}
\frac{\partial}{\partial t}\int_\Omega|\xi|+
\frac{\zeta}{8}\|\xi\|_{L^4(\Gamma_2)}^4
\leq C(\|{\bf B}_0\|_0+\|\nabla\theta_0\|_0).
 \end{eqnarray}
By using The Growall's inequality and integrating the inequality with
respect to $0\leq t\leq s$ for any $s\in [0,T]$, we can finish the proof.

\end{proof}

\begin{lemma}
\label{lemma:Lem-10}
There exists a constant $C_{12}$ depending only on $ \Omega,T,\lambda,\Lambda,R_\alpha,
\|f({\bf x},t)\|_{L^\infty(0,T,L^\infty(\Omega))}$ and
  $ \|{\bf U}\|_{L^\infty(0,T,L^\infty(\Omega))} $ such that
\begin{eqnarray}
\label{equation:eq-911}
\int_{Q_T}\frac{\kappa|\nabla \xi|^2}{(1+|\xi|^{\frac{3}{2}})}\leq C_{12}(\|{\bf B}_0\|_0+\kappa\|\nabla \theta_0\|_0^2)
\end{eqnarray}

\end{lemma}
\begin{proof}
Defining $ h(s)=sign(s)[1-(1+|s|)^{-\frac{1}{2}}]$, we have $|h(s)|\leq 1.$
For any $\eta\in \mathcal{Y}$, there holds $ h(\eta)\in \mathcal{Y}$ and
$\nabla h(\eta)=\frac{\nabla\eta}{2(1+|\eta|)^{\frac{3}{2}}}$.

Let $H(s)$ be the primitive function of $h(s)$ defined by
$$ H(s)=\int_0^s h(s')ds'=2+|s|-\frac{2}{1+|s|}\geq 0,$$
which implies
\begin{eqnarray}
\label{equation:eq-87}
\int_0^T\int_\Omega \frac{\partial \xi}{\partial t}h(\xi)=\int_0^T\frac{\partial}{\partial t}\int_\Omega H(\xi)=\int_\Omega H(\xi(T))\geq 0.
\end{eqnarray}

Taking $\Upsilon=h(\Upsilon)$ in (\ref{equation:eq-38}), we have
\begin{eqnarray}
(\partial_t\xi, h(\Upsilon))&+&(\kappa\nabla\xi, \nabla h(\Upsilon))
+<(\Psi(\xi+\theta_0)-\Psi(\theta_0)),h(\Upsilon)>_{\Gamma_2}\nonumber\\
\label{equation:eq-88}
&=&
([q(\xi)\mathcal{K}({\bf B})]_\epsilon,h(\Upsilon))-(\kappa\nabla\theta_0,\nabla h(\Upsilon)),
\end{eqnarray}
From (\ref{equation:eq-87}),we have
\begin{eqnarray}
&&\int_{0}^T\int_\Omega\kappa \nabla\xi\cdot \nabla h(\Upsilon)
+\int_0^T\int_{\Gamma_2}(\Psi(\xi+\theta_0)-\Psi(\theta_0))h(\Upsilon)\nonumber\\
\label{equation:eq-89}
&\leq &\int_{0}^T\int_\Omega[q(\xi)\mathcal{K}({\bf B})]_\epsilon h(\Upsilon)
-\int_{0}^T\int_\Omega\kappa\nabla\theta_0\nabla h(\Upsilon).
\end{eqnarray}
Now we estimate (\ref{equation:eq-88}) term by term.
\begin{eqnarray}
\label{equation:eq-90}
&& \int_{0}^T\int_\Omega\kappa \nabla\xi\cdot \nabla h(\Upsilon)=\frac{1}{2}
\int_{0}^T\int_\Omega \frac{\kappa|\nabla \xi|^2}{(1+|\xi|^{\frac{3}{2}})},\\
\label{equation:eq-91}
&&(\Psi(\xi+\theta_0)-\Psi(\theta_0))h(\Upsilon)\geq [1-(1+|\xi|)^{-\frac{1}{2}}]\frac{\zeta}{8}(|\xi|^4+\omega|\xi|)\geq 0.
\end{eqnarray}
From (\ref{equation:eq-84}) and Lemma \ref{lemma:Lem-71},
we have
\begin{eqnarray}
\label{equation:eq-92}
&&\int_0^T\int_\Omega[q(\xi)\mathcal{K}({\bf B})]_\epsilon h(\Upsilon)\leq
\int_0^T\int_\Omega q(\xi)\mathcal{K}({\bf B})
\leq C_{13}\|{\bf B}_0\|_0,\\
\label{equation:eq-93}
&& \int_0^T\int_\Omega  \kappa\nabla\theta_0\nabla h(\Upsilon)
\leq \|\kappa\theta_0\|_{L^2(\Omega)}[ \int_{Q_T}\frac{|\nabla\xi|^2}{(1+|\xi|)^3}]^{\frac{1}{2}}.
\end{eqnarray}
By Young's inequality in (\ref{equation:eq-93}), we have
  \begin{eqnarray}
  \frac{1}{2}
\int_{0}^T\int_\Omega \frac{\kappa|\nabla \xi|^2}{(1+|\xi|^{\frac{3}{2}})}
\leq C_{13}\|{\bf B}_0\|_0+C_{14}\|\kappa\nabla\theta_0\|_{L^2(\Omega)},
\end{eqnarray}
which implies the estimate of this lemma.
\end{proof}

\begin{lemma}
\label{lemma:Lem-11}
Assume that $1\leq q\leq \frac{5}{4}$, there exists a constant $C>0$ such that
\begin{eqnarray}
\label{equation:eq-95}
\|\xi\|_{L^{\frac{4q}{3}(Q_T)}}+\|\nabla\xi\|_{L^q(Q_T)}\leq C.
\end{eqnarray}
\end{lemma}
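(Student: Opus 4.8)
The plan is to read (\ref{equation:eq-95}) as the parabolic counterpart of the Boccardo--Gallou\"et regularity for problems with $L^{1}$ right-hand side, and to obtain it by coupling the two $\epsilon$-independent a priori bounds already at hand: the bound $\xi\in L^{\infty}(0,T;L^{1}(\Omega))$ of Lemma~\ref{lemma:lem-8} and the weighted gradient bound of Lemma~\ref{lemma:Lem-10}. These are joined by a H\"older ``de-weighting'' of $\nabla\xi$ together with a parabolic Gagliardo--Nirenberg embedding for $\xi$, arranged so that the integrability of $\xi$ produced by the embedding is exactly what the de-weighting step consumes. Throughout I work with the one-parameter family of weight exponents $1+\gamma$, $0<\gamma<1$, since a single weight will not reach the full range of $q$.

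First I would de-weight the gradient. For $0<\gamma<1$ and $1\le q<2$, writing $|\nabla\xi|^{q}=\bigl(|\nabla\xi|^{2}/(1+|\xi|)^{1+\gamma}\bigr)^{q/2}(1+|\xi|)^{(1+\gamma)q/2}$ and applying H\"older's inequality on $Q_{T}$ with conjugate exponents $2/q$ and $2/(2-q)$ gives
\[
\|\nabla\xi\|_{L^{q}(Q_{T})}^{q}\le\Bigl(\int_{Q_{T}}\frac{|\nabla\xi|^{2}}{(1+|\xi|)^{1+\gamma}}\Bigr)^{q/2}\Bigl(\int_{Q_{T}}(1+|\xi|)^{\frac{(1+\gamma)q}{2-q}}\Bigr)^{\frac{2-q}{2}}.
\]
With the weighted bound controlling the first factor, the $L^{q}$ estimate for $\nabla\xi$ is reduced to showing $\xi\in L^{r}(Q_{T})$ with $r=\frac{(1+\gamma)q}{2-q}$.

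To produce that integrability I would linearise the weight by $v=g_{\gamma}(\xi)$ with $g_{\gamma}'(s)=(1+|s|)^{-(1+\gamma)/2}$, so that $|\nabla v|^{2}=|\nabla\xi|^{2}/(1+|\xi|)^{1+\gamma}$ and hence $\nabla v\in L^{2}(Q_{T})$ by the weighted bound, while $g_{\gamma}(0)=0$ keeps the homogeneous condition on $\Gamma_{1}$. Since $|v|\sim|\xi|^{(1-\gamma)/2}$ for large $|\xi|$, Lemma~\ref{lemma:lem-8} turns into $v\in L^{\infty}(0,T;L^{2/(1-\gamma)}(\Omega))$. The parabolic Gagliardo--Nirenberg (interpolation) inequality for $v\in L^{\infty}(0,T;L^{m})\cap L^{2}(0,T;H^{1})$ with $m=2/(1-\gamma)$ in dimension three yields $v\in L^{p}(Q_{T})$ with $p=\frac{2(3+m)}{3}=\frac{10-6\gamma}{3(1-\gamma)}$, that is $\xi\in L^{s}(Q_{T})$ with $s=\frac{1-\gamma}{2}\,p=\frac{5-3\gamma}{3}$. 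The scheme closes when $r\le s$; choosing $\gamma=\frac{5-4q}{3}$ makes this an equality, and since then $s=\frac{4q}{3}$ one obtains simultaneously $\nabla\xi\in L^{q}(Q_{T})$ and $\xi\in L^{4q/3}(Q_{T})$, which is precisely (\ref{equation:eq-95}).

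The step I expect to be the main obstacle is covering the whole interval $1\le q\le\frac54$. The admissible $\gamma=\frac{5-4q}{3}$ lies in $(0,\frac13]$ as $q$ runs over $[1,\frac54)$, so the single weight $1+\gamma=\frac32$ of Lemma~\ref{lemma:Lem-10} (the case $\gamma=\frac12$) closes the scheme only up to $q=\frac78$ and does not even reach the left endpoint $q=1$. Hence one must first establish the full family $\int_{Q_{T}}|\nabla\xi|^{2}/(1+|\xi|)^{1+\gamma}\le C(\gamma)$ for every $0<\gamma<1$, by testing (\ref{equation:eq-38}) with $h_{\gamma}(\xi)=\mathrm{sign}(\xi)\,[1-(1+|\xi|)^{-\gamma}]$ exactly as in the proof of Lemma~\ref{lemma:Lem-10}, of which that lemma is the instance $\gamma=\frac12$. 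The delicate feature is that the constant degenerates, $C(\gamma)\to\infty$ as $\gamma\to0^{+}$, precisely the limit forced by $q\uparrow\frac54$; one therefore keeps $\gamma>0$ fixed for each admissible $q$ and accepts a constant in (\ref{equation:eq-95}) that blows up as $q\to\frac54$, which is the reason the endpoint is borderline.
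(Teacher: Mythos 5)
Your proposal is correct and follows the same Boccardo--Gallou\"et skeleton as the paper: both arguments de-weight $\nabla\xi$ by H\"older against a weighted energy $\int_{Q_T}|\nabla\xi|^2(1+|\xi|)^{-(1+\gamma)}$ and then supply exactly the space--time integrability of $\xi$ needed to absorb the second H\"older factor, with the weight exponent tuned so that this integrability equals $\tfrac{4q}{3}$. Where you differ is in how that integrability is produced. The paper works directly with $\xi$: a pointwise-in-time H\"older interpolation $\int_\Omega|\xi|^{4q/3}\le\|\xi\|_{L^1(\Omega)}^{q/3}\|\xi\|_{L^{3q/(3-q)}(\Omega)}^{q}$ plus the embedding $W^{1,q}\hookrightarrow L^{3q/(3-q)}$ gives $\|\xi\|_{L^{4q/3}(Q_T)}^{4q/3}\le C\|\nabla\xi\|_{L^q(Q_T)}^q$, and the resulting self-referential bound $\|\nabla\xi\|^q_{L^q(Q_T)}\le C(1+\|\nabla\xi\|_{L^q(Q_T)}^{q(1-q/2)})$ is closed by Young's inequality because $1-q/2<1$. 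You instead substitute $v=g_\gamma(\xi)$ so that $\nabla v\in L^2(Q_T)$ and $v\in L^\infty(0,T;L^{2/(1-\gamma)}(\Omega))$ follow from Lemmas \ref{lemma:Lem-10} and \ref{lemma:lem-8}, and a parabolic Gagliardo--Nirenberg inequality then gives $\xi\in L^{4q/3}(Q_T)$ outright, with no circular absorption step; your exponent bookkeeping ($s=\tfrac{5-3\gamma}{3}$ with $\gamma=\tfrac{5-4q}{3}$) checks out. More importantly, you have put your finger on a real defect in the paper's own argument: in (\ref{equation:eq-98}) the first H\"older factor must be $\bigl(\int_{Q_T}|\nabla\xi|^2(1+|\xi|)^{-(1+r)}\bigr)^{q/2}$ with $1+r=\tfrac{8-4q}{3}\in[1,\tfrac43]$, and this is \emph{not} controlled by Lemma \ref{lemma:Lem-10}, whose weight exponent is the larger value $\tfrac32$ (a smaller exponent in the denominator makes the integral larger). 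Your remedy --- rerun the proof of Lemma \ref{lemma:Lem-10} with the test function $h_\gamma(\xi)=\mathrm{sign}(\xi)[1-(1+|\xi|)^{-\gamma}]$ for each $\gamma>0$, accepting a constant of order $\gamma^{-1}$ --- is precisely the missing ingredient; and your observation that the constant degenerates as $q\uparrow\tfrac54$ is also honest, since neither your argument nor the paper's reaches the closed endpoint $q=\tfrac54$ asserted in the lemma (the paper itself reverts to the open interval $[1,\tfrac54)$ in Theorem 6).
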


\begin{proof}
Taking $p=\frac{4q}{3}, q_1=\frac{3q}{3-q}$, by the Cauchy-Schwarz inequality, we have
\begin{eqnarray}
\label{equation:eq-96}
\int_\Omega|\xi(t)|^p&=&\int_\Omega|\xi(t)|^{\frac{q}{3}}|\xi(t)|^q
\leq [\int_\Omega|\xi(t)|^{\frac{q}{3}\cdot \frac{3}{q}}]^{\frac{q}{3}}
[\int_\Omega|\xi(t)|^{q\cdot \frac{3}{3-q}}]^{1-\frac{q}{3}}\nonumber\\
&=&\|\xi(t)\|_{L^1(\Omega)}^{\frac{q}{3}}[\int_\Omega|\xi(t)|^{q_1}]^{1-\frac{q}{3}}
\end{eqnarray}
By the embedding of $W^{1,q}\hookrightarrow L^{q_1}$ and using Poincare's inequality, we have
\begin{eqnarray}
\label{equation:eq-97}
\int_{Q_T}|\xi|_p\leq C\|\xi\|_{L^\infty(0,T;L^1(\Omega))}^{\frac{q}{3}}\|\nabla\xi\|_{L^q(Q_T)}^q
\leq C\|\nabla\xi\|_{L^q(Q_T)}^q.
\end{eqnarray}

Taking $ r=\frac{5-4q}{3}$, then we have $ p=\frac{(1+r)q}{2-q}$. By lemma 10 and Cauchy-Schwarz inequality, we have
\begin{eqnarray}
&&\int_{Q_T}|\nabla\xi|^q=\int_{Q_T}
\frac{|\nabla\xi|^q}{(1+|\xi|)^{\frac{q(1+r)}{2}}}{(1+|\xi|)^{\frac{q(1+r)}{2}}}\nonumber\\
&&\leq [\int_{Q_T}\frac{|\nabla\xi|^q}{(1+|\xi|)^{\frac{q(1+r)}{2}}}    ]^{\frac{q}{2}}[\int_{Q_T} {(1+|\xi|)^{\frac{q(1+r)}{2-q}}}]^{1-\frac{q}{2}}\nonumber\\
\label{equation:eq-98}
&&\leq [1+\|\xi \|^p_{L^p(Q_T)}]\leq C(1+\|\nabla\xi\|_{L^q(Q_T)}^{q(1-\frac{q}{2})}),
\end{eqnarray}
which implies $ \|\nabla\xi\|_{L^q(Q_T)}\leq C$, and (\ref{equation:eq-97})
implies $\|\xi\|_{L^p(Q_T)}\leq C.$

\end{proof}

\section{Well-posedness of the Source Problem}
We will prove the well-posedness of the problem (\ref{equation:eq-19})-
(\ref{equation:eq-20}), that is, we will investigate the limit of the solution of
the regularized problem (\ref{equation:eq-37})-
(\ref{equation:eq-38}) as the regularization parameter $\epsilon\rightarrow 0$.
For convenience, we denote the solutions of (\ref{equation:eq-37})-
(\ref{equation:eq-38}) by $({\bf B}_\epsilon,\xi_\epsilon).$
Then the regularized problem (\ref{equation:eq-37})-
(\ref{equation:eq-38})  can be represented by:find
 ${\bf B}_\epsilon\in L^2(0,T;{\mathcal{V}}) $ and $\xi_\epsilon \in L^2(0,T;\mathcal{Y})$
 such that
\begin{eqnarray}
(\partial_t{ \bf B}_\epsilon,{\Phi})&+&(\lambda(\xi+\theta_0)\nabla\times {\bf B}_\epsilon,\nabla\times{\Phi})
+{\Lambda}(\nabla\cdot{\bf B}_\epsilon,\nabla\cdot{\Phi})=R_\alpha(\frac{f({\bf x},t){\bf B}_\epsilon}{1+\gamma|{\bf B}_\epsilon|^2}, \nabla\times{\Phi})\nonumber\\
\label{equation:eq-99}
&+&({\bf U}\times{\bf B}_\epsilon,\nabla\times{\Phi}), \ \ \forall {\Phi\in \mathcal{V}},\\
(\partial_t\xi_\epsilon, \Upsilon)&+&(\kappa\nabla\xi_\epsilon, \nabla\Upsilon)
+<(\Psi(\xi_\epsilon+\theta_0)-\Psi(\theta_0)),\Upsilon>_{\Gamma_2}\nonumber\\
\label{equation:eq-100}
&=&
([q(\xi_\epsilon)\mathcal{K}({\bf B}_\epsilon)]_\epsilon,\Upsilon)-(\kappa\nabla\theta_0,\nabla\Upsilon),\forall \Upsilon\in
 \mathcal{Y}.
\end{eqnarray}

From Lemma \ref{lemma:Lem-71}, there exists ${\bf B}\in L^2(0,T;\mathcal{V})$ and a sequence ${\bf B}_\epsilon$ such that
$$ {\bf B}_\epsilon\rightarrow {\bf B}, \ in\  L^2(0,T;\mathcal{V}),\ \ \
{\bf B}_\epsilon\rightarrow {\bf B}, \nabla\times {\bf B}_\epsilon \rightarrow \nabla\times{\bf B},\ in\ L^1(Q_T).$$

From lemma \ref{lemma:Lem-11},there exists a $\xi\in W^{1,q}(Q_T)$ and a sequence $\xi_\epsilon$ such that
$$\xi_\epsilon\rightarrow \xi\ in W^{1,q}(Q_T),\forall q\in [1,\frac{5}{4}).  $$
Since $q(\xi)$ is bounded and Lipschitz continuous, we know that
$$q(\xi_\epsilon)\rightarrow q(\xi), \ \ a.e.\ \ in\ \ Q_T.$$

\begin{theorem}
Let ${\bf B}$ be the limit of the approximate solutions ${\bf B}_\epsilon$ as $\epsilon\rightarrow 0$. Then   ${\bf B}$  satisfies the weak formulation (\ref{equation:eq-19}) together with the initial condition ${\bf B}(0)={\bf B}_0(x)$.
\end{theorem}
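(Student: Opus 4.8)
The plan is to exploit the crucial structural observation that the magnetic equation (\ref{equation:eq-99}) in the regularized system carries \emph{no} cut-off: the regularization $[\,\cdot\,]_\epsilon$ enters only the heat equation (\ref{equation:eq-100}). Consequently (\ref{equation:eq-99}) already has exactly the form of the target identity (\ref{equation:eq-19}), and the whole task reduces to a term-by-term passage to the limit $\epsilon\to 0$ using the convergences already supplied: Lemma \ref{lemma:Lem-71} gives ${\bf B}_\epsilon\rightharpoonup{\bf B}$ in $L^2(0,T;\mathcal V)$ together with ${\bf B}_\epsilon\to{\bf B}$ and $\nabla\times{\bf B}_\epsilon\to\nabla\times{\bf B}$ strongly in $L^1(Q_T)$, while Lemma \ref{lemma:Lem-11} gives the strong convergence $\xi_\epsilon\to\xi$ in $W^{1,q}(Q_T)$, hence $\xi_\epsilon\to\xi$ a.e.\ along a subsequence. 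First I would test (\ref{equation:eq-99}) with separated functions ${\bf v}({\bf x},t)={\Phi}({\bf x})\phi(t)$, $\Phi\in C_0^\infty(\Omega)$, $\phi\in C_0^\infty(0,T)$, and integrate over $Q_T$, in direct analogy with the $\tau\to 0$ argument already carried out for the Rothe approximations.

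The linear terms would be treated as follows. For the time-derivative term I would integrate by parts in $t$ (the boundary contributions vanish since $\phi$ has compact support), obtaining $-\int_{Q_T}{\bf B}_\epsilon\cdot\partial_t{\bf v}$, and then pass to the limit using ${\bf B}_\epsilon\to{\bf B}$; reversing the integration by parts reproduces $\int_{Q_T}\partial_t{\bf B}\cdot{\bf v}$. The divergence penalty term $\Lambda(\nabla\cdot{\bf B}_\epsilon,\nabla\cdot{\bf v})$ passes directly by the weak $L^2$ convergence of $\nabla\cdot{\bf B}_\epsilon$. The diffusion term is the delicate one, and I expect it to be the main obstacle, precisely because of the temperature-dependent coefficient $\lambda(\xi_\epsilon+\theta_0)$: here $\nabla\times{\bf B}_\epsilon$ converges only weakly in $L^2$, so I would use the a.e.\ convergence $\xi_\epsilon\to\xi$ from Lemma \ref{lemma:Lem-11} together with the continuity and the uniform bound $\lambda\le\lambda_M$ to conclude, by dominated convergence, that $\lambda(\xi_\epsilon+\theta_0)\,\nabla\times{\bf v}\to\lambda(\xi+\theta_0)\,\nabla\times{\bf v}$ strongly in $L^2(Q_T)$; the weak-times-strong pairing then yields $\int_{Q_T}\lambda(\xi+\theta_0)\,\nabla\times{\bf B}\cdot\nabla\times{\bf v}$.

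For the two nonlinear right-hand-side terms I would lean on the strong $L^1(Q_T)$ convergence. By Lemma \ref{lemma:Lem-31} the map ${\bf B}\mapsto{\bf B}/(1+\gamma|{\bf B}|^2)$ is globally Lipschitz with constant $\frac{9}{4}$, so ${\bf B}_\epsilon\to{\bf B}$ in $L^1(Q_T)$ transfers to strong $L^1$ convergence of the quenched field; multiplying by the bounded factor $f$ and testing against the bounded $\nabla\times{\bf v}$ gives convergence of the $\alpha$-quench term to $R_\alpha\int_{Q_T}\frac{f{\bf B}}{1+\gamma|{\bf B}|^2}\cdot\nabla\times{\bf v}$. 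Likewise, since ${\bf U}$ is bounded, ${\bf U}\times{\bf B}_\epsilon\to{\bf U}\times{\bf B}$ strongly in $L^1(Q_T)$, giving convergence of the advection term. Collecting all limits produces the space-time identity; by the arbitrariness of $\phi\in C_0^\infty(0,T)$ it localizes to the pointwise-in-$t$ weak form for test functions in $C_0^\infty(\Omega)$, and the density of $C_0^\infty(\Omega)$ in $\mathcal V$ extends it to all $\Phi\in\mathcal V$, which is exactly (\ref{equation:eq-19}).

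Finally, for the initial condition I would mirror the device used earlier for $\xi(0)=0$: choose $\eta(t)=(T-t)\Phi$ with $\Phi\in C_0^\infty(\Omega)$, so that $\eta(T)=0$ and $\eta(0)=T\Phi$, integrate the time-derivative term by parts, and pass to the limit. Since every ${\bf B}_\epsilon$ satisfies ${\bf B}_\epsilon(0)={\bf B}_0$, matching the boundary contributions before and after the limit forces $T\int_\Omega{\bf B}(0)\cdot\Phi=T\int_\Omega{\bf B}_0\cdot\Phi$ for all $\Phi\in C_0^\infty(\Omega)$, whence ${\bf B}(0)={\bf B}_0$. The only genuine subtlety, as already flagged, is the weak-strong product in the diffusion term requiring the a.e.\ convergence of $\xi_\epsilon$; every other step is a routine transcription of the $\tau\to 0$ passage.
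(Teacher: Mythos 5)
Your proposal is correct and follows essentially the same route as the paper: the paper itself dispatches this theorem with the remark that the proof is parallel to that of Theorem 2 (the $\tau\to 0$ Rothe limit), and your argument is precisely a careful transcription of that term-by-term limit passage to the $\epsilon\to 0$ setting, including the $(T-t)\Phi$ device for the initial condition. If anything, your treatment of the diffusion term via a weak--strong pairing (dominated convergence for $\lambda(\xi_\epsilon+\theta_0)\nabla\times{\bf v}$ against weak $L^2$ convergence of $\nabla\times{\bf B}_\epsilon$) is more precise than the paper's corresponding step in Theorem 2.
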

\begin{proof}
The proof is parallel to that of Theorem 2 and we omit the details here.

\end{proof}

\begin{lemma}
\label{lemma:lem-12}
There exists a subsequence of ${\bf B}_\epsilon$ denoted still by the same notation such that
\begin{eqnarray}
\label{equation:eq-101}
\lim\limits_{\epsilon\rightarrow 0}\|q(\xi_\epsilon)\mathcal{K}({\bf B}_\epsilon)-
q(\xi)\mathcal{K}({\bf B})\|_{L^1(Q_T)}=0.
\end{eqnarray}
\end{lemma}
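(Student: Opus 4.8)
The plan is to establish the $L^1(Q_T)$ convergence through a triangle-inequality decomposition that decouples the temperature factor $q(\xi_\epsilon)$ from the magnetic-field factor $\mathcal{K}({\bf B}_\epsilon)$. First I would write
\begin{eqnarray*}
q(\xi_\epsilon)\mathcal{K}({\bf B}_\epsilon)-q(\xi)\mathcal{K}({\bf B})
=q(\xi_\epsilon)\big(\mathcal{K}({\bf B}_\epsilon)-\mathcal{K}({\bf B})\big)
+\big(q(\xi_\epsilon)-q(\xi)\big)\mathcal{K}({\bf B}),
\end{eqnarray*}
and bound the two summands separately in $L^1(Q_T)$.

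For the first summand I use the uniform bound $|q(\cdot)|\leq q_{\max}$, so that it suffices to prove $\mathcal{K}({\bf B}_\epsilon)\to\mathcal{K}({\bf B})$ in $L^1(Q_T)$, which I do term by term. For the quadratic term I factor
\begin{eqnarray*}
|\nabla\times{\bf B}_\epsilon|^2-|\nabla\times{\bf B}|^2
=\nabla\times({\bf B}_\epsilon-{\bf B})\cdot\nabla\times({\bf B}_\epsilon+{\bf B}),
\end{eqnarray*}
and estimate its $L^1(Q_T)$ norm by $\|\nabla\times({\bf B}_\epsilon-{\bf B})\|_{L^2(Q_T)}\|\nabla\times({\bf B}_\epsilon+{\bf B})\|_{L^2(Q_T)}$; the first factor vanishes by the strong convergence ${\bf B}_\epsilon\to{\bf B}$ in $L^2(0,T;\mathcal{V})$, while the second stays uniformly bounded by Lemma \ref{lemma:Lem-71}. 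For the convective term I add and subtract $\nabla\times{\bf B}\cdot({\bf U}\times{\bf B}_\epsilon)$, splitting the difference into a piece controlled by $\|\nabla\times({\bf B}_\epsilon-{\bf B})\|_{L^2}\|{\bf U}\|_{L^\infty}\|{\bf B}_\epsilon\|_{L^2}$ and one by $\|\nabla\times{\bf B}\|_{L^2}\|{\bf U}\|_{L^\infty}\|{\bf B}_\epsilon-{\bf B}\|_{L^2}$, both tending to $0$. For the quenching term I split analogously: the curl-difference piece is handled by noting that $\frac{f{\bf B}_\epsilon}{1+\gamma|{\bf B}_\epsilon|^2}$ is uniformly bounded in $L^\infty$, and the remaining piece is controlled by Lemma \ref{lemma:Lem-31} together with $\|{\bf B}_\epsilon-{\bf B}\|_{L^2}\to 0$, giving the bound $R_\alpha\|f\|_{L^\infty}\frac{9}{4}\|\nabla\times{\bf B}\|_{L^2}\|{\bf B}_\epsilon-{\bf B}\|_{L^2}$. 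Summing these shows $\mathcal{K}({\bf B}_\epsilon)\to\mathcal{K}({\bf B})$ in $L^1(Q_T)$.

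For the second summand I invoke the Lebesgue dominated convergence theorem. Since $q(\xi_\epsilon)\to q(\xi)$ almost everywhere in $Q_T$ and $|q(\xi_\epsilon)-q(\xi)|\leq 2q_{\max}$, the integrand converges to $0$ a.e. and is dominated by $2q_{\max}|\mathcal{K}({\bf B})|$, which lies in $L^1(Q_T)$ because ${\bf B}\in L^2(0,T;\mathcal{V})$ renders every term of $\mathcal{K}({\bf B})$ integrable by Cauchy--Schwarz. Hence $\|(q(\xi_\epsilon)-q(\xi))\mathcal{K}({\bf B})\|_{L^1(Q_T)}\to 0$, and combining the two estimates yields the claim.

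The main obstacle is the quadratic term $|\nabla\times{\bf B}_\epsilon|^2$: mere weak convergence of $\nabla\times{\bf B}_\epsilon$ in $L^2(Q_T)$ would be insufficient, since the map $v\mapsto v^2$ is not weakly continuous. The argument genuinely relies on the \emph{strong} $L^2(Q_T)$ convergence of $\nabla\times{\bf B}_\epsilon$ furnished by ${\bf B}_\epsilon\to{\bf B}$ in $L^2(0,T;\mathcal{V})$; once that is secured, the factorization above reduces the quadratic term to a product of an $L^2$-null factor with a uniformly $L^2$-bounded one. A secondary point to check carefully is the admissibility of $\mathcal{K}({\bf B})$ as a dominating function, which is exactly the integrability established in the preceding paragraph.
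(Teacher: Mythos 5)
Your proposal is correct and follows essentially the same route as the paper: the same triangle-inequality splitting into a bounded-$q$ factor times $\mathcal{K}({\bf B}_\epsilon)-\mathcal{K}({\bf B})$ plus a $q$-difference times $\mathcal{K}({\bf B})$, with the quadratic term handled by the difference-of-squares factorization and Cauchy--Schwarz against the strong convergence of $\nabla\times{\bf B}_\epsilon$, and the coefficient term by a.e.\ convergence of $q(\xi_\epsilon)$. If anything, you are more explicit than the paper (which merely asserts each limit vanishes) in spelling out the Cauchy--Schwarz estimates and the dominated-convergence step.
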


\begin{proof}

Firstly, we have
\begin{eqnarray}
\label{equation:eq-102}
&&\lim\limits_{\epsilon\rightarrow 0}\int_{Q_T}|q(\xi_\epsilon)\mathcal{K}({\bf B}_\epsilon)-
q(\xi)\mathcal{K}({\bf B})|\nonumber\\
&=&\lim\limits_{\epsilon\rightarrow 0}\int_{Q_T}
|q(\xi_\epsilon)(| \nabla\times {\bf B}_\epsilon|^2-| \nabla\times {\bf B}|^2)  |
+(q(\xi_\epsilon)-q(\xi))| \nabla\times {\bf B}|^2\nonumber\\
&=&\lim\limits_{\epsilon\rightarrow 0}\int_{Q_T}[q(\xi_\epsilon)(| \nabla\times {\bf B}_\epsilon|-| \nabla\times {\bf B}|)(| \nabla\times {\bf B}_\epsilon|+| \nabla\times {\bf B}|)\nonumber\\
&&\hspace{0.5cm}+(q(\xi_\epsilon)-q(\xi))| \nabla\times {\bf B}|^2]=0
\end{eqnarray}
 Secondly, we have
\begin{eqnarray}
\label{equation:eq-103}
&&\lim\limits_{\epsilon\rightarrow 0}\int_{Q_T}|q(\xi_\epsilon)\nabla\times{\bf B}_\epsilon\cdot ({\bf U}\times{\bf B}_\epsilon )  -q(\xi)\nabla\times{\bf B}\cdot ({\bf U}\times{\bf B} )|\nonumber\\
&&=\lim\limits_{\epsilon\rightarrow 0}\int_{Q_T}
(q(\xi_\epsilon)-q(\xi))\nabla\times{\bf B}_\epsilon\cdot ({\bf U}\times{\bf B}_\epsilon )+q(\xi)\nabla\times{\bf B}_\epsilon\cdot({\bf U}\times ({\bf B}_\epsilon-{\bf B}) )\nonumber\\
&&\hspace{0.5cm}+q(\xi)(\nabla\times{\bf B}_\epsilon-\nabla\times{\bf B})\cdot ({\bf U}\times{\bf B} )\nonumber\\
&&\leq\lim\limits_{\epsilon\rightarrow 0}\int_{Q_T}| (q(\xi_\epsilon)-q(\xi))\nabla\times{\bf B}_\epsilon\cdot ({\bf U}\times{\bf B}_\epsilon )|\nonumber\\
&&+\lim\limits_{\epsilon\rightarrow 0}\int_{Q_T}|q(\xi)(\nabla\times{\bf B}_\epsilon-\nabla\times{\bf B})\cdot ({\bf U}\times{\bf B} )|\nonumber\\
&&+\lim\limits_{\epsilon\rightarrow 0}\int_{Q_T}|q(\xi)(\nabla\times{\bf B}_\epsilon-\nabla\times{\bf B})\cdot ({\bf U}\times{\bf B} )|=0,
\end{eqnarray}
Thirdly, we have
\begin{eqnarray}
\label{equation:eq-104}
&&\lim\limits_{\epsilon\rightarrow 0}
R_\alpha\int_{Q_T}|q(\xi_\epsilon)\nabla\times{\bf B}_\epsilon\cdot\frac{f({\bf x},t){\bf B}_\epsilon}{1+\gamma|{\bf B}_\epsilon|^2}-q(\xi)\nabla\times{\bf B}\cdot\frac{f({\bf x},t){\bf B}}{1+\gamma|{\bf B}|^2}|\nonumber\\
&&\leq \lim\limits_{\epsilon\rightarrow 0}R_\alpha\int_{Q_T}
|(q(\xi_\epsilon)-q(\xi))\nabla\times{\bf B}_\epsilon\cdot\frac{f({\bf x},t){\bf B}_\epsilon}{1+\gamma|{\bf B}_\epsilon|^2}|\nonumber\\
&&+\lim\limits_{\epsilon\rightarrow 0}R_\alpha\int_{Q_T}|q(\xi)\nabla\times{\bf B}_\epsilon\cdot f({\bf x},t)\cdot (\frac{{\bf B}_\epsilon}{1+\gamma|{\bf B}_\epsilon|^2}-\frac{{\bf B}}{1+\gamma|{\bf B}|^2})|\nonumber\\
&&+\lim\limits_{\epsilon\rightarrow 0}R_\alpha\int_{Q_T}
|q(\xi)\frac{f({\bf x},t)}{1+\gamma|{\bf B}|^2}(\nabla\times{\bf B}_\epsilon-\nabla\times{\bf B})|=0.
\end{eqnarray}
From (\ref{equation:eq-102})-(\ref{equation:eq-104}), by using the triangle inequality, (\ref{equation:eq-101}) can be proved.

\begin{theorem}
Let $\xi$ be the limit of the approximate solutions $\xi_\epsilon$ as $\epsilon \rightarrow 0.$  Then $\xi $ satisfies the
weak formulation (\ref{equation:eq-20}) together with the initial condition $\xi(0)=0.$
\end{theorem}

\begin{proof}
Define the function: for any $>0$
$$g_r(s)=\frac{1}{1+rs^4}, G_r(s)=\int_0^sg_r(s')ds'.$$
It is easy to see that $ G_r$ is  a primitive function of $g_r$ and it satisfies  $|g_r(s)|\leq 1, |G_r|\leq T.$  Since $\xi_\epsilon\rightarrow \xi$ in $W^{1,6/5}(Q_T)$ when $\epsilon\rightarrow 0$, it is easy to see $ g_r(\xi_\epsilon)\rightarrow g_r(\xi)$ in $W^{1,6/5}(Q_T)$ and $G_r(\xi_\epsilon)\rightarrow G_r(\xi) $ in $W^{1,6/5}(Q_T)$.
Moreover, $g_r(\xi_\epsilon)$ and $G_r(\xi_\epsilon)$
are uniformly bounded with respect to
$\epsilon$, we infer that
$$G_r(\xi_\epsilon)\rightarrow G_r(\xi), g_r(\xi_\epsilon)\rightarrow g_r(\xi) \ \ \ in W^{1,6/5}(Q_T).$$

For any $v\in C_0^\infty(0,T;C^\infty(\Omega))$, let $ \Upsilon_\epsilon=vg_r(\xi_\epsilon),\ \ \Upsilon=vg_r(\xi).$
Clearly, we have $\phi_\epsilon\rightarrow \phi$ in $Q_T$. The proof consists of two steps.

From (\ref{equation:eq-100}), we have
\begin{eqnarray}
(\partial_t\xi_\epsilon, \Upsilon_\epsilon)&+&(\kappa\nabla\xi_\epsilon, \nabla\Upsilon_\epsilon)
+<(\Psi(\xi_\epsilon+\theta_0)-\Psi(\theta_0)),\Upsilon_\epsilon>_{\Gamma_2}\nonumber\\
\label{equation:eq-105}
&=&
([q(\xi_\epsilon)\mathcal{K}({\bf B}_\epsilon)]_\epsilon,\Upsilon_\epsilon)-(\kappa\nabla\theta_0,\nabla\Upsilon_\epsilon),
\end{eqnarray}
It is easy to see that
\begin{eqnarray}
&&\lim\limits_{\epsilon\rightarrow 0}\int_{Q_T}\partial_t\xi_\epsilon\Upsilon_\epsilon
=\lim\limits_{\epsilon\rightarrow 0}\int_{Q_T}\frac{\partial G_r(\xi_\epsilon)}{\partial t}v\nonumber\\
\label{equation:eq-106}
&&=-\lim\limits_{\epsilon\rightarrow 0}\int_{Q_T}
G_r(\xi_\epsilon)\frac{\partial v}{\partial t}
=\int_{Q_T}
G_r(\xi)\frac{\partial v}{\partial t}
=\int_{Q_T}\frac{\partial \xi}{\partial t}.
\end{eqnarray}
At the same time, since $ \xi_\epsilon\rightarrow \xi$ in $ W^{1,q}(Q_T),\forall q\in[1,\frac{5}{4})$, there holds
\begin{eqnarray}
&&\lim\limits_{\epsilon\rightarrow 0}\int_{Q_T}\kappa\nabla\xi_\epsilon \cdot \nabla\Upsilon_\epsilon\nonumber\\
&=&\lim\limits_{\epsilon\rightarrow 0}\int_{Q_T}
\kappa g_r(\xi_\epsilon)\nabla\xi_\epsilon \cdot \nabla  v-
\lim\limits_{\epsilon\rightarrow 0}\int_{Q_T}\frac{4r\kappa \xi_\epsilon^3}{1+r\xi_\epsilon^4}|\nabla\xi_\epsilon|^2\nonumber\\
&=&\int_{Q_T}\kappa g_r(\xi)\nabla\xi \cdot \nabla  v
-\int_{Q_T}\frac{4r\kappa \xi^3}{1+r\xi^4}|\nabla\xi|^2\nonumber\\
\label{equation:eq-107}
&=&\int_{Q_T}\kappa\nabla\xi \cdot \nabla\Upsilon.
\end{eqnarray}

From Lemma \ref{lemma:lem-8}
there exists a subsequence denoted by the same notation such that
$ \xi_\epsilon\rightarrow \xi$ in $L^3(0,T;\Gamma_2)$. This implies that
$$ (\zeta|\xi_\epsilon+\theta_0|^3+\omega)g_r(\xi_\epsilon)\rightarrow
(\zeta|\xi+\theta_0|^3+\omega)g_r(\xi), \ \ a.e.\  in \ (0,T)\times \Gamma_2. $$
The third term of  (\ref{equation:eq-105}) satisfies
\begin{eqnarray}
\label{equation:eq-108}
&&\lim\limits_{\epsilon\rightarrow 0}\int_{0}^T\int_{\Gamma_2}\Psi(\xi_\epsilon+\theta_0)\Upsilon_\epsilon=
\int_{0}^T\int_{\Gamma_2}\Psi(\xi+\theta_0)\Upsilon.
\end{eqnarray}

For the righthand side of (\ref{equation:eq-105}), by lemma \ref{lemma:lem-12}, we have
\begin{eqnarray}
&&\lim\limits_{\epsilon\rightarrow 0}[\int_{Q_T}
[q(\xi_\epsilon)\mathcal{K}({\bf B}_\epsilon)]_\epsilon\Upsilon_\epsilon+\int_0^T\int_{\Gamma_2}\Psi(\theta_0)
\Upsilon_\epsilon-
\int_{Q_T}\kappa\nabla\theta_0\nabla\Upsilon_\epsilon]\nonumber\\
\label{equation:eq-109}
&&=\int_{Q_T}
q(\xi)\mathcal{K}({\bf B})\Upsilon+\int_0^T\int_{\Gamma_2}\Psi(\theta_0)\Upsilon-
\int_{Q_T}\kappa\nabla\theta_0\nabla\Upsilon
\end{eqnarray}
From (\ref{equation:eq-106})-(\ref{equation:eq-109}) and (\ref{equation:eq-105}), we can get
\begin{eqnarray}
&&\int_{Q_T}\frac{\partial\xi}{\partial t}\Upsilon+
\int_{Q_T}\kappa\nabla\xi\cdot \nabla\Upsilon
+\int_0^T\int_{\Gamma_2}\Psi(\xi+\theta_0)\Upsilon\nonumber\\
\label{equation:eq-110}
&&=\int_{Q_T}
q(\xi)\mathcal{K}({\bf B})\Upsilon+\int_0^T\int_{\Gamma_2}\Psi(\theta_0)\Upsilon-
\int_{Q_T}\kappa\nabla\theta_0\nabla\Upsilon.
\end{eqnarray}
The initial condition $\xi(0)=0 $ can be proved similarly as in the proof of Theorem 3. We
do not elaborate on the details here.

For the function $g_r(\xi)$, we know $ g_r(\xi)\rightarrow 1, \ a.e. in \ Q_T$. Then we have
$$ \lim\limits_{r\rightarrow 0}\int_{Q_T}\kappa\nabla\xi\cdot \nabla g_r(\xi)= \lim\limits_{r\rightarrow 0}\int_{Q_T}
-\frac{4r\kappa\xi^3}{1+r\xi^4}|\nabla\xi|^2=0.$$
We can get
$$\lim\limits_{r\rightarrow 0}\int_{Q_T}[\frac{\partial \xi}{\partial t}\Upsilon+\kappa\nabla\xi\cdot\nabla\Upsilon]
=\int_{Q_T}[\frac{\partial \xi}{\partial t} v+\kappa\nabla\xi\cdot\nabla v]. $$
Since $\|\xi_\epsilon\|_{L^4(0,T;\Gamma_2)}\leq C$,  there exists a subsequence such that $ \|\xi\|_{L^4(0,T;\Gamma_2)}\leq \lim\limits_{\epsilon\rightarrow 0}\|\xi_\epsilon\|_{L^4(0,T;\Gamma_2)}\leq C.$
Then we have
$$ \lim\limits_{r\rightarrow 0}\int_{0}^T\int_{\Gamma_2}\Psi(\xi+\theta_0)\Upsilon
=\int_{0}^T\int_{\Gamma_2}\Psi(\xi+\theta_0)v.$$

Similarly, the righthand side converges to the form
$$ \int_{Q_T}
q(\xi)\mathcal{K}({\bf B})v +\int_0^T\int_{\Gamma_2}\Psi(\theta_0)v-
\int_{Q_T}\kappa\nabla\theta_0\nabla v.$$

Collecting all the above equalities and using (\ref{equation:eq-110}), we finally get:for $\forall v\in C_0^\infty(0,T;\mathcal{Y} \cap C^\infty(\Omega))$
\begin{eqnarray}
&&\int_{Q_T}\frac{\partial\xi}{\partial t}v+
\int_{Q_T}\kappa\nabla\xi\cdot \nabla v
+\int_0^T\int_{\Gamma_2}\Psi(\xi+\theta_0)v\nonumber\\
\label{equation:eq-111}
&&=\int_{Q_T}
q(\xi)\mathcal{K}({\bf B})v+\int_0^T\int_{\Gamma_2}\Psi(\theta_0)v-
\int_{Q_T}\kappa\nabla\theta_0\nabla v \ \ .
\end{eqnarray}
By the arbitrariness of v, we conclude (\ref{equation:eq-20}).
\end{proof}

In the foloowing, we present the stability of the solutions of the problem
(\ref{equation:eq-19})-(\ref{equation:eq-20}) from Lemma 8-Lemma 11 directly.
\begin{theorem}
Let ${\bf B}, \xi$ be the limits of  ${\bf B}_\epsilon, \xi_\epsilon $ given by (\ref{equation:eq-99}) and (\ref{equation:eq-100}), respectively. Then $ ({\bf B},\xi)$
solves the weak problem (\ref{equation:eq-19})-(\ref{equation:eq-20}). Furthermore,
\begin{eqnarray}
\label{equation:eq-112}
\|{\bf B}\|_{L^2(0,T;\mathcal{V})}+\|\xi\|_{L^{\frac{4q}{3}}(\Omega)}
+\|\nabla\xi\|_{L^q(Q_T)}\leq C, \ \ \forall q\in [1, \frac{5}{4}),
\end{eqnarray}
where $C$ depending on $ \Omega,T,\lambda,\Lambda,R_\alpha,
\|f({\bf x},t)\|_{L^\infty(0,T,L^\infty(\Omega))}$ and
  $ \|{\bf U}\|_{L^\infty(0,T,L^\infty(\Omega))}$.
\end{theorem}

\end{proof}


In the end, we give the uniqueness analysis of  the solutions of the problem (\ref{equation:eq-19})-(\ref{equation:eq-20}).
\begin{theorem}
Assume that ${\bf B}\in L^\infty (0,T;W^{1,4}({curl},\Omega))$, $\xi\in L^2(0,T;H^1(\Omega))\cap L^\infty (0,T;L^\infty(\Omega))$, $\lambda, \sigma$ satisfy the Lipsctiz continuous, $U, f \in L^\infty(0,T;L^\infty(\Omega))$, then the equations (\ref{equation:eq-19})-(\ref{equation:eq-20}) have a unique solution pair (${\bf B}, \xi$).
\end{theorem}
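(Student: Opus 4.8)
The plan is to run the standard energy/Gronwall uniqueness argument, exploiting the two extra regularity hypotheses exactly where the nonlinearities would otherwise be uncontrollable. Suppose $(\mathbf B_1,\xi_1)$ and $(\mathbf B_2,\xi_2)$ are two solution pairs sharing the data $\mathbf B_0$ and $0$, and set $\mathbf W=\mathbf B_1-\mathbf B_2$, $\eta=\xi_1-\xi_2$, so that $\mathbf W(0)=0$, $\eta(0)=0$. First I would subtract the two copies of (\ref{equation:eq-19}) and test with $\Phi=\mathbf W$. The diffusion term splits as
$$(\lambda(\xi_1+\theta_0)\nabla\times\mathbf W,\nabla\times\mathbf W)+((\lambda(\xi_1+\theta_0)-\lambda(\xi_2+\theta_0))\nabla\times\mathbf B_2,\nabla\times\mathbf W),$$
whose first piece is bounded below by $\lambda_0\|\nabla\times\mathbf W\|_0^2$. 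Using the Lipschitz continuity of $\lambda$, the assumption $\nabla\times\mathbf B_2\in L^\infty(0,T;L^4(\Omega))$, and H\"older with exponents $(4,4,2)$, the second piece is controlled by $C\|\eta\|_{L^4}\|\nabla\times\mathbf W\|_0$. The $\alpha$-quench difference is handled by the pointwise Lipschitz bound $9/4$ of Lemma \ref{lemma:Lem-31}, and the convection term by the $L^\infty$ bound on $\mathbf U$; both reduce to $C\|\mathbf W\|_0\|\nabla\times\mathbf W\|_0$. A Young inequality then yields
$$\tfrac12\tfrac{d}{dt}\|\mathbf W\|_0^2+\tfrac{\lambda_0}{2}\|\nabla\times\mathbf W\|_0^2+\Lambda\|\nabla\cdot\mathbf W\|_0^2\le C\|\mathbf W\|_0^2+C\|\eta\|_{L^4}^2.$$

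Next I would subtract the two copies of (\ref{equation:eq-20}) and test with $\Upsilon=\eta$. The time derivative gives $\tfrac12\tfrac{d}{dt}\|\eta\|_0^2$, the conduction term gives $\kappa\|\nabla\eta\|_0^2$, and the boundary term is nonnegative by the monotonicity of $\Psi$ used in Lemma \ref{lemma:Lem-3}, so it may be dropped. The genuinely delicate contribution is the Joule source $q(\xi_1)\mathcal K(\mathbf B_1)-q(\xi_2)\mathcal K(\mathbf B_2)$, which I would split as
$$(q(\xi_1)-q(\xi_2))\mathcal K(\mathbf B_1)+q(\xi_2)(\mathcal K(\mathbf B_1)-\mathcal K(\mathbf B_2)).$$
For the first term the Lipschitz continuity of $q=\sigma$ gives $|q(\xi_1)-q(\xi_2)|\le L_q|\eta|$; since $\mathbf B_1,\nabla\times\mathbf B_1\in L^\infty(0,T;L^4(\Omega))$ forces $\mathcal K(\mathbf B_1)\in L^\infty(0,T;L^2(\Omega))$, H\"older produces $C\|\mathcal K(\mathbf B_1)\|_0\|\eta\|_{L^4}^2$. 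For the second, writing $|\nabla\times\mathbf B_1|^2-|\nabla\times\mathbf B_2|^2=(\nabla\times\mathbf B_1+\nabla\times\mathbf B_2)\cdot\nabla\times\mathbf W$ and treating the remaining convection and $\alpha$-quench differences via Lemma \ref{lemma:Lem-31} and the boundedness of $q,\mathbf U,f$, the contributions are bounded through H\"older with $(4,2,4)$ by $C\|\nabla\times\mathbf W\|_0\|\eta\|_{L^4}+C\|\mathbf W\|_0\|\eta\|_{L^4}$.

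The crucial closing step is to dispose of the $\|\eta\|_{L^4}^2$ terms. Here I would invoke the three-dimensional Gagliardo–Nirenberg inequality $\|\eta\|_{L^4}^2\le C\|\nabla\eta\|_0^{3/2}\|\eta\|_0^{1/2}+C\|\eta\|_0^2$ together with a Young inequality with conjugate exponents $(4/3,4)$ to absorb the $\|\nabla\eta\|_0^{3/2}$ factor into the coercive term $\kappa\|\nabla\eta\|_0^2$, at the cost of a term $C\|\eta\|_0^2$; the uniform $L^\infty(0,T;L^4)$ bounds on $\nabla\times\mathbf B_i$ and on $\mathcal K(\mathbf B_1)$ make every prefactor finite and time-uniform, and the $L^\infty$ bound on $\xi$ guarantees $q(\xi_i)\le q_{\max}$. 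Choosing the Young parameters so that the residual $\|\nabla\times\mathbf W\|_0^2$ and $\|\nabla\eta\|_0^2$ are absorbed on the left and adding the two estimates gives
$$\tfrac12\tfrac{d}{dt}\big(\|\mathbf W\|_0^2+\|\eta\|_0^2\big)\le C\big(\|\mathbf W\|_0^2+\|\eta\|_0^2\big).$$
Since $\mathbf W(0)=0$ and $\eta(0)=0$, Gronwall's inequality forces $\mathbf W\equiv0$ and $\eta\equiv0$ on $(0,T]$, i.e. $\mathbf B_1=\mathbf B_2$ and $\xi_1=\xi_2$, proving uniqueness. I expect the Joule term to be the main obstacle: without the extra regularity $\nabla\times\mathbf B\in L^4$ and $\xi\in L^\infty$ one can neither endow $\mathcal K(\mathbf B_1)$ with an $L^2$ bound nor control $\|\eta\|_{L^4}^2$ by the conduction term, and it is precisely the quadratic-in-$\nabla\times\mathbf B$ structure of $\mathcal K$ that renders these two hypotheses indispensable.
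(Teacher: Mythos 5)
Your proposal is correct and follows essentially the same route as the paper's own proof: subtract the two weak formulations, test with the differences, split the temperature-dependent diffusion and the Joule source so that the Lipschitz continuity of $\lambda$ and $\sigma$ together with the $L^\infty(0,T;W^{1,4}(curl,\Omega))$ and $L^\infty(0,T;L^\infty(\Omega))$ bounds yield terms controlled by $\|\tilde\xi\|_{L^4(\Omega)}$, then close with the interpolation $\|\tilde\xi\|_{L^4}\le C\|\tilde\xi\|_0^{1/4}\|\tilde\xi\|_1^{3/4}$, Young's inequality, and Gronwall. The only differences from the paper are cosmetic (which index carries the frozen factor in the splittings, and the Lipschitz constant $9/4$ versus the paper's $2$ for the $\alpha$-quench term), so no further comment is needed.
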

\begin{proof}
Assume $({\bf B}_1, \xi_1)$ and $({\bf B}_2, \xi_2)$ are two solutions of (\ref{equation:eq-19})-(\ref{equation:eq-20}), with ${\bf B}_i$ stays bounded in $L^\infty (0,T;W^{1,4}({curl},\Omega))$ and $\xi_i$ stays bounded in $L^2(0,T;H^1(\Omega))\cap L^\infty (0,T;L^\infty(\Omega))$, for $i=1, 2$. By denoting $\tilde{\bf B}={\bf B}_1-{\bf B}_2, \tilde{\xi}=\xi_1-\xi_2$ and setting $\Phi=\tilde{\bf B}, \Upsilon=\tilde{\xi}$, we get
\begin{eqnarray}
\label{equation:uni-001}
&&\frac{1}{2}\frac{d}{dt}\|\tilde{\bf B}\|_0^2+(\lambda(\theta_1)\nabla\times\tilde{\bf B},\nabla\times\tilde{\bf B})+((\lambda(\theta_1)-\lambda(\theta_2))\nabla\times{\bf B}_2,\nabla\times\tilde{\bf B})\nonumber\\
&&=R_\alpha(\frac{f({\bf x},t){\bf B}_1}{1+\gamma|{\bf B}_1|^2}-\frac{f({\bf x},t){\bf B}_2}{1+\gamma|{\bf B}_2|^2},\nabla\times\tilde{\bf B})+({\bf U}\times\tilde{\bf B},\nabla\times\tilde{\bf B}),\\
\label{equation:uni-002}
&&\frac{1}{2}\frac{d}{dt}\|\tilde{\xi}\|_0^2-(\kappa\nabla\tilde{\xi},\nabla\tilde{\xi})+\langle\Psi(\xi_1+\theta_0)
-\Psi(\xi_2+\theta_0),\tilde{\xi}\rangle_{\Gamma_2}\nonumber\\
&&=([q(\xi_1)K({\bf B}_1)]_\epsilon-[q(\xi_2)K({\bf B}_2)]_\epsilon,\tilde{\xi}).
\end{eqnarray}

For the first error equation, based on the equality that $\lambda(\theta_1)-\lambda(\theta_2) = \lambda' (\eta) \tilde{\xi}$, with $\eta$ between $\theta_1$ and $\theta_2$, we have
\begin{eqnarray}
  &&
  - ((\lambda(\theta_1)-\lambda(\theta_2))\nabla\times{\bf B}_2,\nabla\times\tilde{\bf B})
  = - ( \lambda' (\eta) \tilde{\xi} \, \nabla\times{\bf B}_2,\nabla\times\tilde{\bf B}) \nonumber
\\
  &\le&
  \| \lambda' (\eta) \|_{L^\infty(\Omega)} \| \tilde{\xi} \|_{L^4 (\Omega)} \| \nabla\times{\bf B}_2 \|_{L^4 (\Omega)} \| \nabla\times\tilde{\bf B} \|_0  \nonumber
\\
  &\le&
  C \| \tilde{\xi} \|_{L^4 (\Omega)} \| \nabla\times{\bf B}_2 \|_{L^4 (\Omega)} \| \nabla\times\tilde{\bf B} \|_0 \le C \| \tilde{\xi} \|_{L^4 (\Omega)} \| \nabla\times\tilde{\bf B} \|_0 ,  \label{uniqueness-1}
\end{eqnarray}
in which the last two steps come from the fact that both $\theta_1$ and $\theta_2$ stay bounded in $L^\infty (0,T; L^\infty (\Omega))$, and ${\bf B}_2$ stays bounded in $L^\infty (0,T; W^{1,4}({curl},\Omega))$. The right hand side of (\ref{equation:uni-001}) could be bounded in a more straightforward way:
\begin{eqnarray}
  &&
  R_\alpha(\frac{f({\bf x},t){\bf B}_1}{1+\gamma|{\bf B}_1|^2}-\frac{f({\bf x},t){\bf B}_2}{1+\gamma|{\bf B}_2|^2},\nabla\times\tilde{\bf B})
  \le 2 R_\alpha \|f\|_{L^\infty(\Omega)} \|\tilde{\bf B}\|_{0} \|\nabla\times\tilde{\bf B}\|_0 ,
  \label{uniqueness-2}
\\
  && \mbox{since} \, \,
  \left| \frac{{\bf B}_1}{1+\gamma|{\bf B}_1|^2}-\frac{{\bf B}_2}{1+\gamma|{\bf B}_2|^2}  \right|
  \le 2 | {\bf B}_1 - {\bf B}_2 | ,  \nonumber
\\
  &&
  ({\bf U}\times\tilde{\bf B},\nabla\times\tilde{\bf B}) \le  \|{\bf U}\|_{L^\infty(\Omega)}\|\tilde{\bf B}\|_{0}\|\nabla\times\tilde{\bf B}\|_{0} .    \label{uniqueness-3}
\end{eqnarray}
Therefore, a substitution of (\ref{uniqueness-1})-(\ref{uniqueness-3}) into (\ref{equation:uni-001}) yields
\begin{eqnarray}
\label{equation:uni-003}
\frac{1}{2}\frac{d}{dt}\|\tilde{\bf B}\|_0^2 + \lambda_{min} \|\nabla\times\tilde{\bf B}\|_0^2 \leq C ( \|\tilde{\xi}\|_{L^4(\Omega)} + \|\tilde{\bf B}\|_{0} ) \| \nabla \times \tilde{\bf B}\|_{0} .
\end{eqnarray}
For the second error equation (\ref{equation:uni-002}), a direct calculation shows that
\begin{eqnarray}
&&\frac{1}{2}\frac{d}{dt}\|\tilde{\xi}\|_0^2+\kappa_{min}\|\nabla\tilde{\xi}\|_0^2+\langle\Psi(\xi_1+\theta_0)
-\Psi(\xi_2+\theta_0),\tilde{\xi}\rangle_{\Gamma_2}\nonumber\\
&&\leq \left( (q(\xi_1)-q(\xi_2))  \mathcal{K}({\bf B}_2) , \tilde{\xi} \right) \nonumber\\
&&+ \Bigl( q(\xi_1)(|\nabla\times{\bf B}_1|^2-|\nabla\times{\bf B}_2|^2-(\nabla\times{\bf B}_1\cdot({\bf U}\times{\bf B}_1)-\nabla\times{\bf B}_2\cdot({\bf U}\times{\bf B}_2))\nonumber\\
&&-(R_\alpha\nabla\times{\bf B}_1\frac{f{\bf B}_1}{1+\gamma|{{\bf B}_1}|^2}-R_\alpha\nabla\times{\bf B}_2\frac{f{\bf B}_2}{1+\gamma|{{\bf B}_2}|^2}) ) ,  \tilde{\xi} \Bigr) .  \label{uniqueness-4}
\end{eqnarray}
The assumption that ${\bf B}_2$ stays bounded in $L^\infty (0,T; W^{1,4}({curl},\Omega))$ implies that
\begin{eqnarray}
  \| \mathcal{K}({\bf B}_2) \|_{L^\infty (0,T; L^2 (\Omega))} \le C . \label{uniqueness-5}
\end{eqnarray}
This in turn indicates that
\begin{eqnarray}
  &&
  \left( (q(\xi_1)-q(\xi_2))  \mathcal{K}({\bf B}_2) , \tilde{\xi} \right)
  = \left( q' (\eta) \xi  \mathcal{K}({\bf B}_2) , \tilde{\xi} \right)   \nonumber
\\
  &\le& C  \| \xi \|_{L^4 (\Omega)} \| \mathcal{K}({\bf B}_2) \|_{L^2 (\Omega)} \| \tilde{\xi} \|_{L^4 (\Omega)}
  \le C   \| \xi \|_{L^4 (\Omega)}^2 .  \label{uniqueness-6}
\end{eqnarray}
Again, the fact that both $\xi_1$ and $\xi_2$ stay bounded in $L^\infty (0,T; L^\infty (\Omega))$ has been used in the derivation. For the second expansion term on the right hand side of (\ref{uniqueness-4}), we see that
\begin{eqnarray}
  &&
  \left( q(\xi_1)(|\nabla\times{\bf B}_1|^2-|\nabla\times{\bf B}_2|^2 ) ,  \tilde{\xi} \right)
  = \left( q(\xi_1) ( \nabla\times ( {\bf B}_1 + {\bf B}_2 ) ) \cdot ( \nabla \times \tilde{\bf B} ) ,  \tilde{\xi} \right)  \nonumber
\\
  &\le&
  C ( \| \nabla\times {\bf B}_1 \|_{L^4 (\Omega)} + \| \nabla \times {\bf B}_2 \|_{L^4 (\Omega)} )  \| \nabla \times \tilde{\bf B} \|_0  \| \tilde{\xi} \|_{L^4 (\Omega)}  \nonumber
\\
  &\le&
   C \|\tilde{\xi}\|_{L^4(\Omega)} \| \nabla \times \tilde{\bf B}\|_0  .   \label{uniqueness-7}
\end{eqnarray}
The other terms on the right hand side of  (\ref{uniqueness-4}) could be analyzed in a similar way:
\begin{eqnarray}
  &&
  - \Bigl( q(\xi_1) (\nabla\times{\bf B}_1\cdot({\bf U}\times{\bf B}_1)-\nabla\times{\bf B}_2\cdot({\bf U}\times{\bf B}_2)) , \tilde{\xi} \Bigr) \nonumber
\\
  &\le&
  C \|\tilde{\xi}\|_{L^4(\Omega)} ( \| \tilde{\bf B} \|_0 + \| \nabla \times \tilde{\bf B}\|_0 ) ,    \label{uniqueness-8}
\\
  &&
  - R_\alpha \Bigl( q(\xi_1) ( \nabla\times{\bf B}_1\frac{f{\bf B}_1}{1+\gamma|{{\bf B}_1}|^2}-\nabla\times{\bf B}_2\frac{f{\bf B}_2}{1+\gamma|{{\bf B}_2}|^2}) ) ,  \tilde{\xi} \Bigr) \nonumber
\\
  &\le&
  C \|\tilde{\xi}\|_0 \| \tilde{\bf B} \|_0 .    \label{uniqueness-9}
\end{eqnarray}
And also, the estimate for the boundary integral term on the left hand side of \ref{uniqueness-4}) is trivial:
\begin{eqnarray}
  \langle\Psi(\xi_1+\theta_0)
-\Psi(\xi_2+\theta_0),\tilde{\xi}\rangle_{\Gamma_2} \ge 0 . \label{uniqueness-10}
\end{eqnarray}
Subsequently, a substitution of (\ref{uniqueness-6})-(\ref{uniqueness-10}) into (\ref{uniqueness-4}) results in
\begin{eqnarray}
  \frac{1}{2}\frac{d}{dt}\|\tilde{\xi}\|_0^2+\kappa_{min}\|\nabla\tilde{\xi}\|_0^2
\le C \|\tilde{\xi}\|_{L^4(\Omega)} ( \| \tilde{\bf B} \|_0 + \| \nabla \times \tilde{\bf B}\|_0 )
+ C \|\tilde{\xi}\|_{L^4(\Omega)}^2  .
 \label{equation:uni-004}
\end{eqnarray}

As a result, a combination of (\ref{equation:uni-003}) and (\ref{equation:uni-004}) yields
\begin{eqnarray}
  &&
  \frac{1}{2}\frac{d}{dt} ( \|\tilde{\bf B}\|_0^2 + \|\tilde{\xi}\|_0^2 ) + \lambda_{min} \|\nabla\times\tilde{\bf B}\|_0^2 +\kappa_{min}\|\nabla\tilde{\xi}\|_0^2 \nonumber
\\
  &\le&
   C_1 \|\tilde{\bf B}\|_{0} \| \nabla \times \tilde{\bf B}\|_{0}
   + C_2 \|\tilde{\xi}\|_{L^4(\Omega)} ( \| \tilde{\bf B} \|_0 + \| \nabla \times \tilde{\bf B}\|_0 )
  + C_3 \|\tilde{\xi}\|_{L^4(\Omega)}^2  . \label{equation:uni-005}
\end{eqnarray}
Furthermore, the following Sobolev inequality (in 3-D) is applied:
\begin{eqnarray}
  \| \tilde{\xi} \|_{L^4} \le C \| \tilde{\xi} \|_{H^{\frac34}} \le C \| \tilde{\xi} \|_0^{\frac14} \cdot \| \tilde{\xi} \|_1^{\frac34} \le C ( \| \tilde{\xi} \|_0 + \| \tilde{\xi} \|_0^{\frac14} \cdot \| \nabla\tilde{\xi} \|_0^{\frac34} , \label{equation:uni-006}
\end{eqnarray}
so that the following estimates become available:
\begin{eqnarray}
  &&
  C_1 \|\tilde{\bf B}\|_{0} \| \nabla \times \tilde{\bf B}\|_{0}
  \le  \frac{C_1^2}{\kappa_{min}} \|\tilde{\bf B}\|_{0}^2 + \frac14 \lambda_{min} \| \nabla \times \tilde{\bf B}\|_0^2 ,   \label{equation:uni-007-1}
\\
  &&
  C_2 \|\tilde{\xi}\|_{L^4(\Omega)} \| \tilde{\bf B} \|_0
  \le  C_4 ( \| \tilde{\xi} \|_0 + \| \tilde{\xi} \|_0^{\frac14} \cdot \| \nabla\tilde{\xi} \|_0^{\frac34} ) \| \tilde{\bf B} \|_0  \nonumber
\\
  &\le&
  C_5 ( \| \tilde{\xi} \|_0^2 + \| \tilde{\bf B} \|_0^2 ) + \frac14 \kappa_{min} \| \nabla \tilde{\xi} \|_0^2 ,   \label{equation:uni-007-2}
\\
  &&
  C_2 \|\tilde{\xi}\|_{L^4(\Omega)} \| \nabla \times \tilde{\bf B} \|_0
  \le  C_4 ( \| \tilde{\xi} \|_0 + \| \tilde{\xi} \|_0^{\frac14} \cdot \| \nabla\tilde{\xi} \|_0^{\frac34} ) \| \nabla \times \tilde{\bf B} \|_0  \nonumber
\\
  &\le&
  C_6 ( \| \tilde{\xi} \|_0^2 + \| \tilde{\bf B} \|_0^2 ) + \frac14 \lambda_{min} \| \nabla \times \tilde{\bf B}\|_0^2 + \frac14 \kappa_{min} \| \nabla \tilde{\xi} \|_0^2 ,   \label{equation:uni-007-3}
\\
  &&
   C_3 \|\tilde{\xi}\|_{L^4(\Omega)}^2 \| \nabla \times \tilde{\bf B} \|_0
  \le  C_7 ( \| \tilde{\xi} \|_0 + \| \tilde{\xi} \|_0^{\frac14} \cdot \| \nabla\tilde{\xi} \|_0^{\frac34} )^2  \nonumber
\\
  &\le&
  C_8 \| \tilde{\xi} \|_0^2 + \frac14 \kappa_{min} \| \nabla \tilde{\xi} \|_0^2 ,   \label{equation:uni-007-4}
\end{eqnarray}
in which Young's inequality has been extensively applied. Going back to (\ref{equation:uni-005}), we arrive at
\begin{eqnarray}
  &&
  \frac{1}{2}\frac{d}{dt} ( \|\tilde{\bf B}\|_0^2 + \|\tilde{\xi}\|_0^2 ) + \frac12 \lambda_{min} \|\nabla\times\tilde{\bf B}\|_0^2 + \frac14 \kappa_{min}\|\nabla\tilde{\xi}\|_0^2 \nonumber
\\
  &\le&
  (  \frac{C_1^2}{\kappa_{min}}  + C_5 + C_6 ) \| \tilde{\bf B} \|_0^2
  + (C_5 + C_6 + C_8) \| \tilde{\xi} \|_0^2 . \label{equation:uni-008}
\end{eqnarray}
Consequently, with an application of Gronwall inequality, and making use of the fact that $ \| \tilde{\bf B} ( \cdot, t=0) \|_0 =0$, $ \| \tilde{\xi} ( \cdot, t=0) \|_0 =0$, we arrive at
\begin{eqnarray}
  \| \tilde{\bf B} ( \cdot, t) \|_0 = 0 , \, \, \, \| \tilde{\xi} ( \cdot, t) \|_0 = 0 ,  \quad \forall t > 0. \label{equation:uni-009}
\end{eqnarray}
This completes the uniqueness proof.
\end{proof}

\section{Acknowledgements}
The first author is supported by
P.R. China NSFC (NO. 11471296, 11571389, 11101384).
 The second author is supported by Hong Kong Research Council GRF Grants  B-Q40W and 8-ZDA2.
The third author is supported by NSFC DMS-1418689 and NSFC 11271281.


\begin{thebibliography}{}
  \bibitem{MR01} Molokov,S., Moreau, R., Moffatt, H. K., :, Magnetohydrodynamics,  Springer,  Netherlands.(2007)

 \bibitem{MR02}Parker,E. N.,: Cosmical Magnetic Fields, Clarendon Press, Oxford, 1979.

 \bibitem{MR03}Cattaneo, F. and Hughes,D. W.,:  Nonlinear saturation of the turbulent alpha effect where a
large scale field is imposed, Phys. Rev. E, {\bf 54}, 4532-4535(1996)


 \bibitem{MR04} Moffatt,H. K.,: Magnetic Field Generation in Electrically Conducting Fluids, Cambridge
University Press, Cambridge, UK, 1978.


\bibitem{MR05}  Sanchez,S., Fournier,A., Pinheiro,K. J.  and Aubert,J.,: A mean-field Babcock-Leighton solar dynamo model with long-term variability, Anais da Academia Brasileira de Ci\^{e}ncias,  {\bf 86:1},11-26(2014)

\bibitem{MR06}
Brandenburg, A., and Subramanian,K.,: Astrophyiscal magnetic fields and nonlinear dynamo
theory, Physics Reports, {\bf 417},  1-209 (2005)




\bibitem{MR07} Yin, H. M.:, { Existence and regularity of
 a week solution to Maxwell's equations with a thermal effect},
  Math. Meth. Appl. Sci. {\bf 29}, 1199-1213 (2006)

 \bibitem{MR08}   Metaxas,A.C.,: Foundations of Electroheat,
 A Unified Approach, Wiley, New York, 1996.

  \bibitem{MR09} Elsayed, M.A. Elbashbeshy,  Emam,T.G., and Abdelgaber, K.M.,:
 {Effects of thermal radiation and magnetic field
on unsteady mixed convection flow and heat transfer over
an exponentially stretching surface with suction
in the presence of internal heat generation/absorption},
Journal of the Egyptian Mathematical Society, {\bf 20}, 215¨C222(2012)

 \bibitem{MR10} Ka\v cur, J.,:Method of {R}othe in evolution equations,
 Lecture Notes in Math.,Springer, Berlin,{\bf 1192},23--34(1986)

 \bibitem{MR11}Va\u\i nberg, M. M.,: Variational method and method of monotone operators in the
              theory of nonlinear equations, Halsted Press
              (A division of John Wiley \& Sons), New
              York-Toronto, Ont.; Israel Program for Scientific
              Translations, Jerusalem-London,1973

   \bibitem{MR12}Zeidler, E.,:Nonlinear functional analysis and its applications. {II}/{B}:
   Nonlinear monotone operators, Springer-Verlag, New York,1990

  \bibitem{MR13} Ranjit, N. K. and Shit, G. C., Joule heating effects on electromagnetohydrodynamic flow
              through a peristaltically induced micro-channel with different
              zeta potential and wall slip, Phys. A.,{\bf 482},458--476(2017)

    \bibitem{MR14}  Hossain, M. A. and Gorla, R. S. R.,Joule heating effect on magnetohydrodynamic mixed convection
              boundary layer flow with variable electrical conductivity,Internat. J. Numer. Methods Heat Fluid Flow, 23(2),275--288(2013)
\bibitem{MR15}
Berm\'udez, A. and Mu\~noz-Sola, R. and V\'azquez, R., Analysis of two stationary magnetohydrodynamics systems of
              equations including {J}oule heating, J. Math. Anal. Appl.,
              368, 444-468(2010)


    \bibitem{MR16}           Chovan, J. and Slodi\v cka, M.,Induction hardening of steel with restrained {J}oule heating
              and nonlinear law for magnetic induction field: solvability,
              J. Comput. Appl. Math., 311, 630--644(2017)

%
%
%
%
%
%



\end{thebibliography}

\end{document}